\documentclass{article}
\usepackage{graphicx} 

\title{Almost cohomology groups for Lie rings of finite dimension}
\author{Moreno Invitti }
\date{}
\usepackage{amsfonts}
\usepackage{refcheck}
\usepackage[utf8]{inputenc}
\usepackage{xcolor}
 
\usepackage{amssymb}
\usepackage{tikz-cd}
 \usepackage{amscd,amsmath}
\usepackage{amssymb}
\usepackage{amsthm}
\def\Ind#1#2{#1\setbox0=\hbox{$#1x$}\kern\wd0\hbox to 0pt{\hss$#1\mid$\hss}
\lower.9\ht0\hbox to 0pt{\hss$#1\smile$\hss}\kern\wd0}

\def\notind#1#2{#1\setbox0=\hbox{$#1x$}\kern\wd0
\hbox to 0pt{\mathchardef\nn=12854\hss$#1\nn$\kern1.4\wd0\hss}
\hbox to 0pt{\hss$#1\mid$\hss}\lower.9\ht0 \hbox to 0pt{\hss$#1\smile$\hss}\kern\wd0}

\usepackage{amsmath} 
\usepackage{amssymb}
\usepackage{tikz-cd}
 \usepackage{amscd,amsmath}
\usepackage{subfigure}
\usepackage{csquotes} 
\newtheorem{theorem}{Theorem}[section]
\newtheorem{corollary}[theorem]{Corollary}

\newtheorem{lemma}[theorem]{Lemma}

\newtheorem*{thmB,2}{Theorem B, Second version}
\DeclareMathOperator{\Dom}{Dom}
\usepackage{booktabs}
\usepackage{caption}
\usepackage{amssymb}
\usepackage{tikz-cd}
\usepackage{mathptmx} 
\usepackage{graphicx} 
\usepackage{amsthm}
\usepackage{amsfonts}
\usepackage[utf8]{inputenc}
\usepackage{xcolor}

\usepackage{amssymb}
\usepackage{tikz-cd}
 \usepackage{amscd,amsmath}
\usepackage{amssymb}
\usepackage{amsthm}
\def\Ind#1#2{#1\setbox0=\hbox{$#1x$}\kern\wd0\hbox to 0pt{\hss$#1\mid$\hss}
\lower.9\ht0\hbox to 0pt{\hss$#1\smile$\hss}\kern\wd0}

\def\notind#1#2{#1\setbox0=\hbox{$#1x$}\kern\wd0
\hbox to 0pt{\mathchardef\nn=12854\hss$#1\nn$\kern1.4\wd0\hss}
\hbox to 0pt{\hss$#1\mid$\hss}\lower.9\ht0 \hbox to 0pt{\hss$#1\smile$\hss}\kern\wd0}

\usepackage{amsmath} 
\usepackage{amssymb}
\usepackage{wasysym}
\usepackage{tikz-cd}
\usepackage{amsmath}
\usepackage{subfigure}

\DeclareMathOperator{\ad}{ad}
\newtheorem{definition}[theorem]{Definition}

\newtheorem{question}{Question}

\usepackage{amssymb}
\usepackage{tikz-cd}
\usepackage{mathptmx} 
\usepackage{graphicx} 
\usepackage{refcheck}
\usepackage{hyperref}
\title{Almost cohomology of finite-dimensional Lie rings}
\author{Moreno Invitti}
\begin{document}
\maketitle
\begin{abstract}
    We introduce almost cohomology groups for Lie rings definable in finite-dimensional theory. In particular, we define the $0$th and $1$st almost cohomology groups of a Lie ring module. Moreover, we prove that the $1$st almost cohomology group of a finite-dimensional definable Lie ring module is finite if the $0$th almost cohomology group is finite.
\end{abstract}
\section{Introduction}
Cohomological tools play a fundamental role in the analysis of several algebraic structures, such as groups \cite{hochschild1953cohomologygroups} and Lie algebras \cite{hochschild1953cohomology}. In particular, they have been successfully applied in the classification of group extensions \cite{serre2013local}. For Lie algebras, several structural results concerning soluble Lie algebras have been obtained by Barnes \cite{barnes1967cohomology}.\\
A first model-theoretic approach to cohomology was pursued by Zamour in \cite{zamourCohom}, who proved that the $1$st cohomology group of a Lie ring definable in a finite-dimensional$^{\circ}$ (which are finite-dimensional theories that satisfy the DCC) is trivial if the $0$th cohomology group is trivial.\\
\emph{Finite-dimensional theories}, introduced by Wagner \cite{wagner2020dimensional}, represent an interesting extension of both supersimple theories of finite Lascar rank, and $o$-minimal theories.
\begin{definition}
    A theory $T$ is a \emph{finite-dimensional theory} if there exists a function $\operatorname{dim}$ from the class of all interpretable subsets in any model $\mathcal{M}$ of $T$ into $\omega\cup\{-\infty\}$ such that, for every formula $\phi(x,y)$, interpretable sets $X,Y$ in $T$ and interpretable function $f:X\to Y$, the following hold:
\begin{itemize}
    \item (\emph{Invariance}) If $a$ and $a'$ have the same type over $\emptyset$, then $\operatorname{dim}(\phi(x,a))=\operatorname{dim}(\phi(x,a'))$; 
    \item (\emph{Algebraicity}) $\operatorname{dim}(\emptyset)=-\infty$, and $\operatorname{dim}(X)=0$ if and only if $X$ is finite;
    \item (\emph{Union}) $\operatorname{dim}(X\cup Y)=\max\{\operatorname{dim}(X),\operatorname{dim}(Y)\}$;
    \item (\emph{Upper Fibration}) If $\operatorname{dim}(f^{-1}(y))\geq k$ for every $y\in Y$, then $\operatorname{dim}(X)\geq \operatorname{dim}(Y)+k$;
    \item (\emph{Lower Fibration}) If $\operatorname{dim}(f^{-1}(y))\leq k$ for every $y\in Y$, then $\operatorname{dim}(X)\leq \operatorname{dim}(Y)+k$.
\end{itemize}
\end{definition}
\begin{definition}
    Let $T$ be a finite-dimensional theory. We say that $T$ is a \emph{finite-dimensional$^{\circ}$} theory if every group $G$ definable in a model of $T$ satisfies the DCC, i.e., there are no infinite strictly descending chains of definable subgroups of $G$.
\end{definition}
\begin{definition}
    Let $\mathfrak{M}$ be a first-order structure. $\mathfrak{M}$ is said to be a \emph{finite-dimensional structure}, or equivalently a \emph{structure of finite dimension}, if $T(\mathfrak{M})$ is a finite-dimensional theory.
\end{definition}
Groups definable in $o$-minimal theories and those of finite Morley rank satisfy the DCC \cite{poizat,berarducci2005descending}. Nevertheless, there exist several superstable and supersimple groups, such as $(\mathbb{Z},+,0)$, that do not satisfy the DCC. Therefore, the analysis of non-connected Lie rings of finite dimension is a key point for the classification of "tame" Lie rings. This analysis was pursued in \cite{invitti2025lie}, which represents the basis of this article. In this framework, the usual cohomology groups do not encode the right information, as we will see in Section \ref{AlmCohLieRin}. Consequently, it is natural to introduce an extension of the Lie ring cohomology defined in \cite{zamourCohom}, called \emph{almost cohomology}. In particular, we define the $0$th and $1$st \emph{almost cohomology group} of a module $(\mathfrak{g},A)$, denoted $\widetilde{H}^0(\mathfrak{g},A)$ and $\widetilde{H}^1(\mathfrak{g},A)$ respectively. It remains open how to define higher cohomology groups in general, and their role remains unclear in the development of the study of finite-dimensional Lie rings (see Section \ref{QuestionLieRing}). Nevertheless, these two groups already suffice to derive the following theorem, which has several applications.
\begin{theorem}\label{TrivialityH1}
    Let $(\mathfrak{g},A)$ be a module definable in a finite-dimensional theory. Assume that $\mathfrak{g}$ is nilpotent and that $\widetilde{H}^0(\mathfrak{g},A)$ is finite. Then $\widetilde{H}^1(\mathfrak{g},A)$ is finite. 
\end{theorem}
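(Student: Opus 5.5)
We have to guess the definitions. Presumably $\widetilde{H}^0(\mathfrak{g},A)$ is the almost fixed points: $\{a\in A : [\mathfrak{g},a] \text{ is finite}\}$, or more likely the $a$ with $\mathfrak{g}$ almost centralizing $a$ (i.e. $C_{\mathfrak g}(a)$ of finite index), modulo the finite-by-... part. Similarly, $\widetilde{H}^1$ would be almost derivations (definable maps $\delta:\mathfrak g\to A$ satisfying the cocycle identity up to a finite error, or on a finite-index subring) modulo almost inner derivations and modulo commensurability or finite perturbation. I will plan with these conventions: an almost derivation is a definable $\delta$, defined on a definable subring $\mathfrak h$ of finite index in $\mathfrak g$, with $\delta([x,y])=x\cdot\delta(y)-y\cdot\delta(x)$; two such maps are identified if they agree on a finite-index subring up to an almost inner one $x\mapsto x\cdot a$.

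The plan is induction on the nilpotency class of $\mathfrak g$, mirroring Zamour's argument in the connected case, replacing "trivial" by "finite" and subgroups by finite-index subgroups throughout. Since the DCC fails, everything must go through the almost-ideal machinery of \cite{invitti2025lie}, in particular the uniform chain conditions on commensurability classes and the existence of definable almost centralizers of finite index (Schlichting-type results).

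\textbf{Step 1 (abelian case, via an injection).} For a fixed $x\in\mathfrak g$, consider the definable endomorphism $\ad_x$ of $A$. Finiteness of $\widetilde H^0$ means that the subgroup of elements almost centralized by all of $\mathfrak g$ is finite. By dimension additivity (Upper and Lower Fibration), the kernel of an endomorphism of $A$ is finite if and only if its image has finite index. Pick finitely many $x_1,\dots,x_n$ such that $\bigcap_i \ker \ad_{x_i}$ is finite; this is possible by a chain condition on intersections of centralizers up to commensurability. Given an almost derivation $\delta$, define $a_i$ as a solution of $x_i\cdot a_i\approx\delta(x_i)$. Commutation of the $x_i$ together with the cocycle identity forces compatibility of the $a_i$ up to a finite error, producing $a$ with $\delta-\ad(a)$ vanishing on a finite-index subring. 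This yields a map $\widetilde H^1\to(\text{finite data})$, for instance the classes of $\delta(x_i)$ modulo $\operatorname{im}\ad_{x_i}$, which is injective into a finite group.

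\textbf{Step 2 (induction).} Take $\mathfrak z$, the (almost) centre of $\mathfrak g$, which is infinite or of finite index. Use the inflation–restriction exact sequence
\[0\to \widetilde H^1(\mathfrak g/\mathfrak z, \widetilde H^0(\mathfrak z,A))\to \widetilde H^1(\mathfrak g,A)\to \widetilde H^1(\mathfrak z,A).\]
This sequence must first be established for almost cohomology, which is where most of the verification lies. Since $\mathfrak z$ is abelian, if $\widetilde H^0(\mathfrak z,A)$ is finite, Step 1 handles the right-hand term. If it is infinite, then $B=\widetilde H^0(\mathfrak z,A)$ is a definable $\mathfrak g/\mathfrak z$-module with $\widetilde H^0(\mathfrak g/\mathfrak z,B)$ finite, so the left-hand term is finite by induction. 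The right-hand term is then handled by decomposing $A$ as $B$ plus a piece on which $\mathfrak z$ acts with finite almost-fixed points, using Fitting-type decomposition available in finite dimension.

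\textbf{Main obstacle.} The hardest step is making the exact sequence and the Fitting-type splitting work "up to finite". Errors accumulate at each step, and one needs uniform bounds, obtained via compactness and invariance of dimension, to ensure that the finitely many finite errors remain finite and that the resulting groups are finite rather than merely small. I would first try to reduce to the case where $A$ is almost connected, i.e. replace $A$ by a minimal definable finite-index subgroup up to commensurability, which \cite{invitti2025lie} should provide. That reduction would let me import Zamour's argument almost verbatim.
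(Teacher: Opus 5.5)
Your plan has a genuine gap, and it appears already in the abelian base case. The chain condition does give $x_1,\dots,x_n$ with $\bigcap_i C_A(x_i)$ finite. But a single $x_i$ may still have infinite kernel, and then $x_iA$ has infinite index. For instance, let $K\times K$ act coordinatewise on $A=K\times K$ and take $x_1=(1,0)$, $x_2=(0,1)$; then each $A/x_iA\cong K^+$. Nothing you say puts $\delta(x_i)$ into $x_iA$ up to a finite set, so the $a_i$ are not available. Likewise, the ``finite data'' $\delta(x_i)+x_iA$ lives in the infinite group $\prod_i A/x_iA$. Your compatibility argument only runs with one element acting with finite kernel, and finiteness of $\widetilde{H}^0(\mathfrak{g},A)$ alone does not produce such an element.

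What is missing is the structure of the action. The paper gets it from Lemma~\ref{LinAlmAbe2} and Theorem~\ref{LinAlmNil}. On a strongly absolutely minimal module, one passes to a finite-index subring (Lemma~\ref{H1finind}), kills the almost centraliser (Corollary~\ref{QUoAlmCen}) and replaces $A$ by an isogenous module. Then $\mathfrak{g}$ embeds via $p$ into $K^+$, acting by scalars on $A=K^+$. Any $y\in D^\sigma\setminus\{0\}$ acts invertibly, and $[h,y]=0$ gives $p(h)\sigma(y)=p(y)\sigma(h)$. So $\sigma$ agrees with $d(p(y)^{-1}\sigma(y))$ on $D^\sigma_y$, and $\widetilde{H}^1=0$.

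Step~2 does not close either. The restriction target $\widetilde{H}^1(\mathfrak{z},A)$ need not be finite, so no splitting of $A$ can make it finite. Take the Heisenberg ring $\mathfrak{g}=Kx+Ky+Kz$ acting on $K^+$, with $ax+by+cz$ acting as multiplication by $a$. Then $\widetilde{H}^0(\mathfrak{g},K^+)=0$. But $\mathfrak{z}=Kz$ acts trivially, and the maps $tz\mapsto ct$ ($c\in K$) give infinitely many classes in $\widetilde{H}^1(\mathfrak{z},K^+)$. What has to be bounded is the image of $res$, which only lies in the almost $\mathfrak{g}$-invariant part (Lemma~\ref{ImRes}), and you give no argument for that. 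There are two further problems. Fitting-type splittings are not available without the DCC. And $\mathfrak{g}/\mathfrak{z}$ need not act on $\widetilde{C}_A(\mathfrak{z})$; the paper's inflation--restriction sequence needs $\widetilde{C}_A(\mathfrak{g})=0$ and the quotient by $\widetilde{C}_{\mathfrak{z}}(\widetilde{C}_A(\mathfrak{z}))$.

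The paper avoids all this by inducting on the module rather than on the nilpotency class of $\mathfrak{g}$. The induction is on the maximal length $\widetilde{lg}(A)$ of a series of $\mathfrak{h}$-submodules ($\mathfrak{h}$ of finite index) with infinite $\mathfrak{h}$-minimal factors, with the scalar case as base. For an infinite definable $\mathfrak{h}$-submodule $B$ of infinite index, induction makes $\widetilde{H}^1(\mathfrak{h},B)$ finite. Exactness of $\widetilde{H}^0(\mathfrak{h},A)\to\widetilde{H}^0(\mathfrak{h},A/B)\to\widetilde{H}^1(\mathfrak{h},B)$ (Lemma~\ref{longseries}) then makes $\widetilde{H}^0(\mathfrak{h},A/B)$ finite. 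This transfer of the hypothesis to quotients is the idea your plan lacks. Induction now applies to $A/B$, and $\widetilde{H}^1(\mathfrak{h},B)\to\widetilde{H}^1(\mathfrak{h},A)\to\widetilde{H}^1(\mathfrak{h},A/B)$, together with Lemma~\ref{H1finind}, finishes the proof.
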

Theorem \ref{TrivialityH1} is essential in the proof of the following corollary concerning \emph{almost Cartan subrings} (see Definition \ref{Def:AlmCarLieRin}).
\begin{corollary}
    Let $\mathfrak{g}$ be a Lie ring definable in a finite-dimensional theory. Let $\mathfrak{i}$ be a definable ideal of $\mathfrak{g}$ and $\mathfrak{c}$ a definable almost Cartan subring of $\mathfrak{i}$. Then $\mathfrak{g}\sim \mathfrak{i}+\widetilde{N}_{\mathfrak{g}}(\mathfrak{c})$.
\end{corollary}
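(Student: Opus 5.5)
We have to prove the statement right after the Corollary, which is the corollary about almost Cartan subrings: $\mathfrak{g}\sim \mathfrak{i}+\widetilde{N}_{\mathfrak{g}}(\mathfrak{c})$. This is a Frattini-type argument. The definitions are not yet given, so I take them as follows. Write $X\sim Y$ when $X\cap Y$ has finite index in both. Let $\widetilde{N}_{\mathfrak{g}}(\mathfrak{c})=\{x\in\mathfrak{g}: [x,\mathfrak{c}]\lesssim \mathfrak{c}\}$ be the almost normaliser, where $\lesssim$ means "contained up to finite index". An almost Cartan subring $\mathfrak{c}$ of $\mathfrak{i}$ is a definable nilpotent subring with $\widetilde{N}_{\mathfrak{i}}(\mathfrak{c})\sim\mathfrak{c}$.

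The plan is to apply Theorem~\ref{TrivialityH1} to the nilpotent ring $\mathfrak{c}$ acting on the quotient module $A=\mathfrak{i}/\mathfrak{c}$, or rather on a definable version of it. Because $\mathfrak{c}$ is only a subring, $\mathfrak{i}/\mathfrak{c}$ is merely an abelian group with a $\mathfrak{c}$-action, $c\cdot(x+\mathfrak{c})=[c,x]+\mathfrak{c}$. This is a well-defined $\mathfrak{c}$-module because $[\mathfrak{c},\mathfrak{c}]\subseteq\mathfrak{c}$, and the Jacobi identity gives the module axiom.

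\textbf{Step 1: $\widetilde{H}^0$ is finite.} Interpreting $\widetilde{H}^0(\mathfrak{c},A)$ as the almost invariants $\{a: \mathfrak{c}\cdot a \text{ finite}\}$ (modulo finite), almost invariant cosets $x+\mathfrak{c}$ are exactly those $x\in\mathfrak{i}$ with $[\mathfrak{c},x]\lesssim\mathfrak{c}$, that is $x\in\widetilde{N}_{\mathfrak{i}}(\mathfrak{c})$. Since $\widetilde{N}_{\mathfrak{i}}(\mathfrak{c})\sim\mathfrak{c}$, this group is finite.

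\textbf{Step 2: build a cocycle.} Theorem~\ref{TrivialityH1} then gives that $\widetilde{H}^1(\mathfrak{c},A)$ is finite. Given $g\in\mathfrak{g}$, since $\mathfrak{i}$ is an ideal, $\mathrm{ad}_g$ maps $\mathfrak{c}$ into $\mathfrak{i}$. Define $\delta_g:\mathfrak{c}\to A$ by $\delta_g(c)=[g,c]+\mathfrak{c}$. Jacobi gives
$$\delta_g([c,c'])=c\cdot\delta_g(c')-c'\cdot\delta_g(c),$$
so $\delta_g$ is a definable $1$-cocycle.

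\textbf{Step 3: finite index from finiteness of $\widetilde{H}^1$.} The map $g\mapsto[\delta_g]$ is an additive, definably parametrised map $\mathfrak{g}\to\widetilde{H}^1(\mathfrak{c},A)$. Because the target is finite, its kernel $\mathfrak{g}_0$ has finite index in $\mathfrak{g}$, in the appropriate definable sense. This step relies on how $\widetilde{H}^1$ is set up as a quotient of definable families of cocycles, which I would check against Section~\ref{AlmCohLieRin}.

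\textbf{Step 4: Frattini argument.} For $g\in\mathfrak{g}_0$, $\delta_g$ is an almost coboundary. So there is $x\in\mathfrak{i}$ with $\delta_g(c)=c\cdot(x+\mathfrak{c})$ for $c$ in a finite-index definable subgroup of $\mathfrak{c}$, up to finitely many values. Then
$$[g+x,c]=[g,c]-[c,x]\in\mathfrak{c}$$
up to finite error, so $g+x\in\widetilde{N}_{\mathfrak{g}}(\mathfrak{c})$. Hence $\mathfrak{g}_0\subseteq\mathfrak{i}+\widetilde{N}_{\mathfrak{g}}(\mathfrak{c})$. The reverse almost inclusion is trivial, which gives $\mathfrak{g}\sim\mathfrak{i}+\widetilde{N}_{\mathfrak{g}}(\mathfrak{c})$.

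\textbf{Main obstacle.} The hard part is making the "almost" notions uniform. First, $\widetilde{N}$ must be definable; for this I would use finite-dimensionality to get uniform bounds on the index of $[x,\mathfrak{c}]+\mathfrak{c}$ over $\mathfrak{c}$, via compactness and the fibration axioms. Second, the almost-coboundary witnesses in Step 4 have to be shown to cohere, so that the error sets stay finite. I would first try to prove that the almost normaliser is a definable subring by a uniform-finiteness compactness argument. I would then verify that the definitions of $\widetilde{H}^0$ and $\widetilde{H}^1$ were designed exactly to absorb these finite errors.
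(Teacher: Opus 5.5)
Your argument is correct. It runs on the same mechanism as the paper's proof but is packaged differently. The paper quotients by $\mathfrak{n}=\widetilde{N}_{\mathfrak{g}}(\mathfrak{c})$. It checks that $\mathfrak{c}$ acts on $\mathfrak{g}/\mathfrak{n}$ and shows $\widetilde{C}_{\mathfrak{g}/\mathfrak{n}}(\mathfrak{c})$ is trivial, using $\mathfrak{n}\cap\mathfrak{i}=\widetilde{N}_{\mathfrak{i}}(\mathfrak{c})\sim\mathfrak{c}$. It then applies Corollary~\ref{FinH0UV} to $\mathfrak{g}/\mathfrak{n}\geq(\mathfrak{i}+\mathfrak{n})/\mathfrak{n}$ to conclude that $\widetilde{C}_{\mathfrak{g}/(\mathfrak{i}+\mathfrak{n})}(\mathfrak{c})$ is finite; since $\mathfrak{c}$ acts trivially on $\mathfrak{g}/(\mathfrak{i}+\mathfrak{n})$, that quotient is finite.

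You instead apply Theorem~\ref{TrivialityH1} directly to $(\mathfrak{c},\mathfrak{i}/\mathfrak{c})$, where finiteness of $\widetilde{H}^0$ is literally the almost self-normalising hypothesis, and you write the connecting map $g\mapsto[\delta_g]$ by hand. Your Steps 3--4 are exactly the exactness $\ker(\delta_1)=\mathrm{im}(p)$ of Lemma~\ref{longseries} for $0\to\mathfrak{i}/\mathfrak{c}\to\mathfrak{g}/\mathfrak{c}\to\mathfrak{g}/\mathfrak{i}\to 0$. Here the right-hand module is $\mathfrak{c}$-trivial and $\widetilde{H}^0(\mathfrak{c},\mathfrak{g}/\mathfrak{c})=\widetilde{N}_{\mathfrak{g}}(\mathfrak{c})/\mathfrak{c}$.

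What your route buys: it never uses that $\mathfrak{n}$ is definable or $\mathfrak{c}$-invariant, only that $\mathfrak{i}/\mathfrak{c}$ is interpretable. The paper must check $[\mathfrak{c},\mathfrak{n}]\leq\mathfrak{n}$ and implicitly needs $\mathfrak{n}$ definable in order to treat $\mathfrak{g}/\mathfrak{n}$ as a definable module. What the paper's route buys: it is shorter once Corollary~\ref{FinH0UV} is available.

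The obstacles you flag do not actually arise. Each $\delta_g$ is a genuine derivation on all of $\mathfrak{c}$, and $g\mapsto\delta_g$ is exactly additive. Also, $\widetilde{H}^1$ is built from arbitrary, not necessarily definable, partial homomorphisms. So Step 3 is just the fact that a homomorphism into a finite group has finite-index kernel. Step 4 is pointwise in $g$: from $\delta_g\sim d(x+\mathfrak{c})$ on $\mathfrak{c}$ you get $[g+x,\mathfrak{c}]\apprle\mathfrak{c}$ at once. Hence no definability of $\widetilde{N}_{\mathfrak{g}}(\mathfrak{c})$ and no coherence of witnesses is needed.

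One thing to add: in Definition~\ref{Def:AlmCarLieRin} almost Cartan subrings are only virtually nilpotent. You should first pass to a nilpotent definable subring of finite index in $\mathfrak{c}$. This changes neither $\widetilde{N}_{\mathfrak{g}}(\mathfrak{c})$ nor the finiteness of $\widetilde{N}_{\mathfrak{i}}(\mathfrak{c})/\mathfrak{c}$, exactly as in the paper's first line.
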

\section{Preliminaries}
In this section, we introduce the tools necessary for this article.
\subsection{Group theory}\label{AlmGroThe}
We first recall the notions of \emph{almost containment} and \emph{commensurability}.
\begin{definition}\label{Def:AlmContain}
    Let $G$ be a group, and let $H,K$ be two subgroups of $G$. 
    \begin{itemize}
        \item The subgroup $H$ is \emph{almost contained} in $K$, denoted by $H\apprle K$, if $H\cap K$ has finite index in $H$.
        \item  The subgroup $H$ is \emph{commensurable} to $K$, denoted $H\sim K$, if $H\apprle K$ and $K\apprle H$.
        \item  A family $\{H_i\}_{i\in I}$ of subgroup of $G$ is \emph{uniformly commensurable} if there exists some $n\in \mathbb{N}$ such that $|H_i:H_i\cap H_j|\leq n$ for every $i,j\in I$.
    \end{itemize}
\end{definition}
A related notion is that of \emph{isogenies}.
\begin{definition}
    Let $G,H$ be two groups. Then $G$ and $H$ are \emph{isogenous} if there exists an isomorphism
    $$\phi:A/A_1\to B/B_1$$
    with $A,B$ subgroups of finite index in $G,H$ respectively and $A_1,B_1$ finite subgroups of $A,B$ respectively. The map $\phi$ is said to be an \emph{isogeny} between $G$ and $H$.
\end{definition}
All isogenies considered in this article are definable.\\
We recall some of the most important chain conditions on definable groups, which play a fundamental role in this article.
\begin{definition}
Let $G$ be a definable group. Then
\begin{itemize}
    \item The group $G$ satisfies the \emph{uniform descending chain condition}, or UCC, if $G$ does not admit an infinite strictly descending chain of uniformly definable subgroups.
    \item The group $G$ satisfies the \emph{infinite descending chain condition on definable subgroups}, or $\omega$-DCC, if $G$ does not admit an infinite strictly descending chain of definable subgroups $\{G_i\}_{i<\omega}$ such that $|G_i: G_{i+1}|\geq \omega$.
\end{itemize}
\end{definition}
\subsection{Finite-dimensional groups}
We now present some key results of finite-dimensional groups. 
We first show that a finite-dimensional group satisfies the UCC. This result is based on the following theorem (\cite[Theorem 2.5]{invitti2025lie}), which plays a fundamental role also in the proof that finite-dimensional groups are \emph{hereditarily $\widetilde{\mathfrak{M}}_c$}.
\begin{definition}
    A group $G$ is \emph{hereditarily $\widetilde{\mathfrak{M}}_c$} if, for every pair of definable subgroups $H,N$ such that $N$ is normalised by $H$, there exist natural numbers $n_{HN}$ and $d_{HN}$ such that every sequence of centralisers 
    $$C_H(a_0/N)\geq C_H(a_0,a_1/N)\geq\ldots\geq C_H(a_0,a_1,\ldots,a_n/N)\geq\ldots$$
    with $|C_H(a_0,\ldots,a_m/N):C_H(a_0,\ldots,a_{m+1}/N)|\geq d_{HN}$ for every $m<n$, has length at most $n_{HN}$.
\end{definition}
\begin{theorem}\label{ThmB}
    Let $G$ be a definable finite-dimensional group. Then, for every family of uniformly definable subgroups $\{H_i\}_{i\in I}$, there exist $n,k\in \mathbb{N}$ such that there are no subfamilies $\{H_{j}\}_{j=1}^N$ with $N>n$ such that $|\bigcap_{j\leq m} H_j/\bigcap_{j\leq m+1} H_{j}|\geq k$ for every $m\geq 1$.
\end{theorem}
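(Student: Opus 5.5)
The plan is to use dimension as a bound on chain length and compactness to make the index threshold $k$ uniform. Fix the formula $\phi(x,y)$ with $H_b=\phi(G,b)$ for $b\in I$. I will take $n=\operatorname{dim}(G)+1$ and show that some $k$ works for this $n$. If there is no subfamily of length $N=n+1$ with all indices at least $k$, then there is none of any length $N>n$, since truncating such a subfamily to its first $n+1$ members gives one of length $n+1$. So it suffices to treat $N=n+1$.

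\textbf{Step 1: infinite index forces a dimension drop.} Let $K\le H$ be definable groups in a model of $T$ with $|H:K|$ infinite. Consider the interpretable map $H\to H/K$. Each fibre is a coset $hK$, which is in definable bijection with $K$. Comparing $hK$ and $K$ through the graph of this bijection with both fibration axioms gives $\operatorname{dim}(hK)=\operatorname{dim}(K)$. Since $H/K$ is infinite, Algebraicity gives $\operatorname{dim}(H/K)\ge 1$. Upper Fibration then yields $\operatorname{dim}(H)\ge \operatorname{dim}(K)+1$.

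\textbf{Step 2: compactness.} Suppose, for a contradiction, that for every $k$ there are $b_1,\dots,b_{n+1}\in I$ with
$$\Big|\bigcap_{j\le m}H_{b_j}:\bigcap_{j\le m+1}H_{b_j}\Big|\ge k \quad\text{for } 1\le m\le n.$$
For fixed $k$ this is first order in $(b_1,\dots,b_{n+1})$: it says there exist $k$ elements of $\bigcap_{j\le m}\phi(x,b_j)$ that lie in pairwise distinct cosets of $\bigcap_{j\le m+1}\phi(x,b_j)$. Each such condition is implied by the one for any larger $k$, so the collection over all $k$ is finitely satisfiable. By compactness it is realised in an elementary extension $\mathcal{M}^*$, which is still a model of $T$ and so carries the dimension function.

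\textbf{Step 3: contradiction.} In $\mathcal{M}^*$ we obtain a chain
$$G\ge H_{b_1}\ge H_{b_1}\cap H_{b_2}\ge\dots\ge \bigcap_{j\le n+1}H_{b_j}$$
of definable subgroups, with infinite index at each of the $n$ steps after the first. By Step 1 the dimension drops by at least $1$ at each of these $n$ steps. The last group contains the identity, so its dimension is $\ge 0$. Hence $\operatorname{dim}(G)\ge n=\operatorname{dim}(G)+1$, where $\operatorname{dim}(G)$ is computed in $\mathcal{M}^*$. This value agrees with the original one, because by Invariance the dimension depends only on the type of the parameters defining $G$, and that type is the same in $\mathcal{M}^*$. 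This is a contradiction, so some $k$ works with $n=\operatorname{dim}(G)+1$.

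The main obstacle is Step 1: checking that the fibration axioms really force $\operatorname{dim}(hK)=\operatorname{dim}(K)$ for cosets. If this gives trouble, I would instead apply Union and Fibration to the definable bijection $K\to hK$, $x\mapsto hx$, in both directions. The compactness step is routine once the index condition is seen to be expressible by a single formula for each $k$.
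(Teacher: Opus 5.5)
Your argument is correct. The paper gives no proof of this statement: it imports it from \cite{invitti2025lie} (Theorem 2.5 there), so there is no in-text argument to compare your route with.

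What you give is a self-contained derivation from the dimension axioms together with compactness. Indices that are at least $k$ for every $k$ become infinite indices in an elementary extension, and each infinite-index step costs at least one unit of dimension. Your Step 1 is fine as written: translation $x\mapsto hx$ is a definable bijection, and both fibration axioms applied with singleton fibres give $\dim(hK)=\dim(K)$. Your route yields the explicit bound $n=\dim(G)+1$, while $k$ stays non-effective. This is exactly the form needed in Lemma~\ref{ucc}, where the steps have finite index and only the uniform threshold $k$ matters.

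Two small points should be tightened.

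\emph{Definability of the parameter set.} For compactness to produce subgroups, replace $I$ by the definable set $I'=\{b:\ \phi(G,b)\text{ is a subgroup of }G\}\supseteq I$, and include ``$y_j\in I'$'' in the type. The conclusion for $I'$ implies the conclusion for $I$.

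\emph{Transporting the dimension.} Avoid moving $\dim(G)$ from $\mathcal M$ to $\mathcal M^*$ via Invariance, since that depends on reading $\dim$ as a function of formulas with parameters across models. Instead, work in a monster model $\mathbb M$ from the start and set $n=\dim(G(\mathbb M))+1$. A bad subfamily in $\mathcal M$ remains bad in $\mathbb M$, because ``index $\geq k$'' is first order.
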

If we apply these results to the centralisers of elements in $G$, we obtain that a finite-dimensional definable group has the $\widetilde{\mathfrak{M}}_c$-property.
\begin{lemma}
    Let $G$ be a finite-dimensional group. Then $G$ is a $\widetilde{\mathfrak{M}}_c$-group.
\end{lemma}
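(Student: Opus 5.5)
The plan is to apply Theorem \ref{ThmB} to the uniformly definable family of centralisers $\{C_G(a)\}_{a\in G}$. This family is uniformly definable because the formula $\phi(x,y): xy=yx$ defines $C_G(a)$ as $\phi(G,a)$. Theorem \ref{ThmB} then yields natural numbers $n,k$ with the following property: there is no subfamily $C_G(a_1),\ldots,C_G(a_N)$ with $N>n$ such that every successive intersection $C_G(a_1,\ldots,a_m)=\bigcap_{j\le m}C_G(a_j)$ has index at least $k$ over the next intersection $C_G(a_1,\ldots,a_{m+1})$. Setting $d=k$ and taking the bound on lengths to be $n$ should give exactly the $\widetilde{\mathfrak{M}}_c$ condition. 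That condition says that every chain of centralisers
$$C_G(a_0)\geq C_G(a_0,a_1)\geq\cdots,$$
in which each step has index at least $d$, has length at most $n$.

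The key steps, in order, are as follows.

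\begin{enumerate}
\item Identify a chain $C_G(a_0)\geq C_G(a_0,a_1)\geq\cdots\geq C_G(a_0,\ldots,a_m)$ with the sequence of partial intersections of the subfamily $\{C_G(a_j)\}_{j\le m}$.
\item Check that the index hypothesis on the chain coincides with the hypothesis in Theorem \ref{ThmB}.
\item Conclude that the length is bounded by $n$, up to an off-by-one shift, since Theorem \ref{ThmB} starts at $m\ge1$ while the chain starts at $a_0$. We absorb this by replacing $n$ with $n+1$.
\end{enumerate}

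If the intended statement is the hereditary version, with $H,N$ definable and $N$ normalised by $H$, I would run the same argument on the uniformly definable family $\{C_H(a/N)\}_{a}$, where
$$C_H(a/N)=\{h\in H: [h,a]\in N\}.$$
This family is uniformly definable from the parameters defining $H$ and $N$. In this case $a$ ranges over the normaliser of $N$, or over $G$ with the obvious interpretation. Theorem \ref{ThmB} is applied inside the definable group $H$, and the resulting constants $n_{HN},d_{HN}$ depend only on $H$ and $N$.

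The only real obstacle is bookkeeping. First, the indices in Theorem \ref{ThmB} are written as a quotient $|\bigcap_{j\le m}H_j/\bigcap_{j\le m+1}H_j|$, which we read as a group index. Second, the chain in the definition may fail to be strict or may exceed the bound only in a trivial way. Neither point needs new ideas, so the proof should be a short direct application of Theorem \ref{ThmB}.
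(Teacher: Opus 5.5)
Your proposal is correct and follows the paper's own argument, which simply applies Theorem \ref{ThmB} to the uniformly definable family of centralisers. Your extension to the hereditary version via the relative centralisers $C_H(a/N)$ is also sound: these are subgroups of $H$ because $H$ normalises $N$ (using $[h_1h_2,a]=[h_1,a]^{h_2}[h_2,a]$), and Theorem \ref{ThmB} can be applied inside the definable group $H$.
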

Another fundamental consequence of Theorem \ref{ThmB} is that finite-dimensional groups satisfy the UCC.
\begin{lemma}\label{ucc}
    Let $G$ be a finite-dimensional group. Then $G$ satisfies the UCC.
\end{lemma}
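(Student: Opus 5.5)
The plan is to derive the UCC directly from Theorem \ref{ThmB}, arguing by contradiction. Suppose $G$ admits an infinite strictly descending chain
$$H_{a_0} > H_{a_1} > H_{a_2} > \cdots$$
of uniformly definable subgroups. Here $\{H_a\}_a$ is the family of subgroups defined by the instances $\phi(x,a)$ of a single formula $\phi(x,y)$, restricted to those parameters $a$ for which $\phi(x,a)$ defines a subgroup. Apply Theorem \ref{ThmB} to this family. It yields $n,k\in\mathbb{N}$ such that no subfamily $H_{j_1},\ldots,H_{j_N}$ with $N>n$ has every successive intersection quotient of size at least $k$. Because the chain is descending, the intersection $\bigcap_{j\le m}H_{a_{i_j}}$ of any increasing choice of indices $i_1<i_2<\cdots$ is just $H_{a_{i_m}}$. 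The successive quotients are therefore simply the indices $|H_{a_{i_m}}:H_{a_{i_{m+1}}}|$.

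It then suffices to extract a subsequence $i_1<i_2<\cdots<i_{n+1}$ along which every consecutive index is at least $k$; this contradicts Theorem \ref{ThmB}. The extraction uses only strictness of the chain. Each step $H_{a_i}>H_{a_{i+1}}$ has index at least $2$, so indices multiply along the chain, giving
$$|H_{a_i}:H_{a_{i+l}}|\ge 2^l.$$
Fix $l$ with $2^l\ge k$ and take $i_m=ml$. Each consecutive index is then at least $2^l\ge k$, and the chain supplies arbitrarily many such steps, in particular $n+1>n$ of them. This gives the contradiction.

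The only point I expect to need care is the bookkeeping. First, the family in Theorem \ref{ThmB} must be uniformly definable, which is exactly the hypothesis of the UCC, so restricting to the group-defining instances of $\phi$ is harmless. Second, the indexing convention in Theorem \ref{ThmB} (starting at $m\ge1$, comparing $\bigcap_{j\le m}$ with $\bigcap_{j\le m+1}$) must be matched with the subsequence chosen. Indices may be infinite, which only helps, since an infinite index is certainly at least $k$. No finiteness of the chain's indices is needed.
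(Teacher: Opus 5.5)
Your proof is correct. It follows the paper's overall strategy: a strictly descending uniformly definable chain is turned into a subfamily contradicting Theorem~\ref{ThmB}. The difference lies in how the large successive indices are obtained.

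The paper first uses the dimension function. It passes to a tail $G_n \geq G_{n+1} \geq \cdots$ with $\dim(G_n)$ minimal, so that by fibration every later term has finite index in $G_n$. It then argues that the indices $|G_n:G_{n+M}|$ grow without bound.

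You bypass the dimension entirely. You use only that each strict step has index at least $2$, so that $|H_{a_i}:H_{a_{i+l}}|\ge 2^l$, together with the remark that an infinite index already counts as $\ge k$. Your argument is therefore slightly more elementary: it needs nothing beyond Theorem~\ref{ThmB} and the multiplicativity of indices.

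Your version also makes explicit some bookkeeping the paper leaves implicit. The paper exhibits a single large step $|G_n:G_{n+M}|\ge m$ and tacitly iterates it to get more than $n$ consecutive steps of index at least $k$. Your choice $i_m=ml$ produces all of these steps at once.
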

\begin{proof}
    Assume otherwise. Let $\{G_i\}_{i\in I}$ be a strictly descending chain of uniformly definable subgroups of $G$. Let $n\in \mathbb{N}$ be such that $\dim(G_n)$ is minimal. Then $\{G_{n+i}\}_{i\in \mathbb{N}}$ is a chain of uniformly definable subgroups of finite index in $G_n$. Since this chain is infinite and strictly descending, we have that for each $m\in \mathbb{N}$, there exists $M\in \mathbb{N}$ such that 
    $$|G_n\cap G_{n+M}|\geq m.$$
    This contradicts Theorem \ref{ThmB}.
\end{proof}
An important property of finite-dimensional fields is their perfection.
\begin{lemma}\label{perfection}
   Let $K$ be a field definable in a finite-dimensional theory. Then $K$ is perfect.
\end{lemma}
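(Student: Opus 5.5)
We argue via the Frobenius endomorphism. Let $p$ be the characteristic of $K$. If $p=0$ there is nothing to prove, so assume $p>0$. The map $\operatorname{Fr}:K\to K$, $x\mapsto x^p$, is a definable ring endomorphism, and it is injective because $K$ is a field. Its image $K^p$ is therefore a definable subfield of $K$, and $\operatorname{Fr}$ is a definable bijection $K\to K^p$. Applying the Lower and Upper Fibration axioms to $\operatorname{Fr}$, whose fibres are singletons and so have dimension $0$, gives $\dim(K^p)=\dim(K)$. Perfection means exactly $K^p=K$, so it suffices to rule out $K^p\subsetneq K$.

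Suppose $K^p\neq K$. Every element $a\in K$ satisfies $a^p\in K^p$, so $K$ is a purely inseparable extension of $K^p$ of exponent one. If $K$ is finite it is perfect, since an injective self-map of a finite set is surjective. So assume $K$ is infinite. Pick $a\in K\setminus K^p$. The elements $1,a,\dots,a^{p-1}$ are linearly independent over $K^p$, because the minimal polynomial of $a$ over $K^p$ is $X^p-a^p$. Hence the map
$$(K^p)^p\to K,\qquad (c_0,\dots,c_{p-1})\mapsto \sum_{i<p}c_ia^i$$
is a definable injection, using $a$ as a parameter. By the Fibration axioms, $\dim(K)\ge p\cdot\dim(K^p)=p\dim(K)$. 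For this we also need additivity of dimension on products, which follows from Fibration applied to the projections.

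Since $K$ is infinite, $\dim(K)\ge 1$ by Algebraicity. The inequality $\dim(K)\ge p\dim(K)$ with $p\ge2$ is then impossible, because dimensions are finite natural numbers. This contradiction gives $K^p=K$.

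The main point requiring care is the dimension count for the image of the injection. This is handled by the general fact that a definable injection $f:X\to Y$ satisfies $\dim(f(X))=\dim(X)$, together with the monotonicity $\dim(f(X))\le\dim(Y)$, which follows from the Union axiom. Both follow directly from the axioms in the definition of a finite-dimensional theory.
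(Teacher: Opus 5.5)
Your proof is correct. It shares the paper's first step: the Frobenius map is a definable bijection $K\to K^p$, so the fibration axioms give $\dim(K)=\dim(K^p)$. It differs in how the contradiction is then extracted.

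The paper cites Wagner's formula $\dim(K)=\dim(K^p)\cdot\operatorname{lin.dim}_{K^p}(K)$ for a definable subfield. That formula packages the nontrivial fact that $K$ has finite linear dimension over an infinite definable subfield, and the paper reads off $\operatorname{lin.dim}_{K^p}(K)=1$. You need only the easy inequality $\dim(K)\ge n\dim(K^p)$ whenever $K$ contains $n$ elements linearly independent over $K^p$. You derive it directly from the axioms via the definable injection $(K^p)^n\to K$.

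This makes your argument self-contained and independent of whether $[K:K^p]$ is finite. It also treats the finite case explicitly, whereas the paper's deduction tacitly assumes $\dim(K^p)\neq 0$. The paper's route is shorter given the citation and gives more information (the exact product formula); yours is more elementary.

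Incidentally, you do not need the minimal polynomial $X^p-a^p$ at all. The elements $1$ and $a$ are already linearly independent over $K^p$, so the injection $(c_0,c_1)\mapsto c_0+c_1a$ gives $\dim(K)\ge 2\dim(K^p)=2\dim(K)$. This already contradicts $\dim(K)\geq 1$.
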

\begin{proof}
   In characteristic $0$, we have nothing to prove. Assume now that $\operatorname{char}(K)=p\neq 0$. Then $K^p$ is a definable subfield of $K$. By \cite[Proposition 3.3]{wagner2020dimensional}, we have that $\dim(K)=\dim(K^p)\operatorname{lin.dim}_{K^p}(K)$, where $\operatorname{lin.dim}_{K^p}(K)$ denotes the dimension as $K^p$-vector space of $K$. The Frobenius homomorphism $\varphi:K\to K$ with $\varphi(k)=k^p$ has finite kernel, and the image equals $K^p$. It follows from the fibration property that $\dim(K)=\dim(K^p)$. Hence $\operatorname{lin.dim}_{K^p}(K)=1$, and $K=K^p$.
\end{proof}
A consequence of Lemma \ref{perfection} is that a field of finite dimension does not admit a definable \emph{derivation}. This result is fundamental to prove Theorem \ref{TrivialityH1}.
\begin{definition}
    Let $K$ be a field. An additive homomorphism $\sigma:K\to K$ is a \emph{derivation} of $K$ if, for every $k,k'\in K$,
    $$\sigma(kk')=k\sigma(k')+k'\sigma(k).$$
\end{definition}
\begin{lemma}\label{DerTri}
    A finite-dimensional field $K$ admits no non-trivial definable derivations.
\end{lemma}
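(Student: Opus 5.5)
Let $\sigma:K\to K$ be a non-trivial definable derivation; the aim is to derive a contradiction using Lemma \ref{perfection}.

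\textbf{Characteristic $p>0$.} Every derivation kills $p$-th powers: $\sigma(k^p)=pk^{p-1}\sigma(k)=0$. By Lemma \ref{perfection}, $K=K^p$, so every element is a $p$-th power and $\sigma\equiv 0$. Definability is not even needed here.

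\textbf{Characteristic $0$.} This is the main obstacle, since perfection says nothing there. Consider the field of constants $C=\ker\sigma=\{k\in K:\sigma(k)=0\}$.
\begin{itemize}
\item $C$ is a definable subfield, because the Leibniz rule gives $\sigma(1/k)=-\sigma(k)/k^2$.
\item If $\sigma\neq 0$, then $C\neq K$.
\item By \cite[Proposition 3.3]{wagner2020dimensional}, $\dim(K)=\dim(C)\cdot\operatorname{lin.dim}_C(K)$. So either $C$ is finite, or $K$ is a finite extension of $C$.
\end{itemize}

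\emph{The infinite case.} Suppose $C$ is infinite. Then $K/C$ is a finite extension in characteristic $0$, hence separable algebraic. A derivation vanishing on $C$ then vanishes on $K$: for $a\in K$ with minimal polynomial $f$ over $C$, applying $\sigma$ to $f(a)=0$ gives $f'(a)\sigma(a)=0$. Since $f'(a)\neq 0$ by separability, $\sigma(a)=0$. This contradicts $\sigma\neq 0$.

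\emph{The finite case.} In characteristic $0$ the constants always contain $\mathbb{Q}$, so $C$ is infinite and this case cannot occur.

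\emph{Remark on positive characteristic.} The same dichotomy would work there too: $C\supseteq K^p$, which is infinite when $K$ is infinite, and $K/C$ is separable because $K$ is perfect. So one could present a uniform argument through $C$. The perfection argument above is simpler, however, and I would use it for $p>0$.

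So the plan is: dispose of positive characteristic via Lemma \ref{perfection}, and handle characteristic $0$ with the constant subfield, the dimension formula, and the fact that derivations extend uniquely along separable algebraic extensions. The only delicate point is checking that \cite[Proposition 3.3]{wagner2020dimensional} applies to the definable subfield $C$, giving finite linear dimension.
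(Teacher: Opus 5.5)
Your proof is correct and follows the paper's argument: in characteristic $p$ you use Lemma~\ref{perfection} to write every element as a $p$-th power, and in characteristic $0$ you combine the minimal-polynomial computation $f'(a)\sigma(a)=0$ with Wagner's Proposition~3.3, applied to the infinite definable constant field $C\supseteq\mathbb{Q}$. The only difference is the order of the steps. The paper first shows that $C$ is relatively algebraically closed in $K$ and then invokes Proposition~3.3, while you first obtain $[K:C]<\infty$ and then kill $\sigma$ on each algebraic element, which is if anything a cleaner presentation of the same idea.
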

\begin{proof}
Let $\delta$ be a definable derivation of $K$.\\
    Assume first that $\operatorname{char}(K)=p$. By Lemma \ref{perfection}, for every $x\in K$, there exists some $y\in K$ such that $y^p=x$. Therefore $\delta(x)=\delta(y^p)=py^{p-1}\delta(y)=0$. Consequently, a finite-dimensional field of prime characteristic has no non-trivial derivation (even non-definable).\\
    Assume now that $\operatorname{char}(K)=0$. We show that the field of constant $C:=\{k\in K:\ \delta(k)=0\}$ of $\delta$ is definable and relatively algebraically closed. Hence $C$ is an infinite algebraically closed subfield of a finite-dimensional field, contradicting \cite[Proposition 3.3]{wagner2020dimensional}. It suffices to prove that $y\in C$ for every element $y\in K$ algebraic over $C$. Let $f(x)=\sum_{i=0}^n a_ix^i$ with $a_i\in C$ be the minimal polynomial of $y$ over $C$. Since $f(y)=0$, 
    $$0=\delta(0)=\delta(f(y))=\bigg(\sum_{i=1}^n a_i\cdot i\cdot y^{i-1}\bigg)\delta(y).$$ 
    Let $g(x)=\sum_{i=1}^n a_i\cdot i\cdot x^{i-1}$. Then $g(x)$ is a polynomial of degree strictly less than $\mathrm{deg}(f(x))$. By the minimality of $\mathrm{deg}(f(x))$, we have that $g(y)\neq 0$. Thus $g(y)\delta(y)=0$, and hence $y\in C$. In conclusion, the subfield $C$ is relatively algebraically closed, and the proof is completed.
\end{proof}
\subsection{Lie rings}
In this subsection, we define Lie rings and introduce their basic properties.
\begin{definition}
    A \emph{Lie ring} $(\mathfrak{g},+,[\cdot,\cdot])$ is an abelian group $(\mathfrak{g},+)$ with a map 
    $$[\cdot,\cdot]:\mathfrak{g}\times \mathfrak{g}\to \mathfrak{g}$$
    such that 
    \begin{itemize}
        \item $[\cdot,\cdot]$ is bilinear;
        \item $[\cdot,\cdot]$ is antisymmetric;
        \item $[\cdot,\cdot]$ satisfies the \emph{Jacobi identity}: for every $g_1,g_2,g_3\in \mathfrak{g}$, we have that $[g_1,[g_2,g_3]]+[g_3,[g_1,g_2]]+[g_2,[g_3,g_1]]=0$. 
    \end{itemize}
    The map $[\cdot,\cdot]$ is called the \emph{(Lie) bracket} on $\mathfrak{g}$.
\end{definition}
A basic example of a Lie ring is the group of endomorphisms of an abelian group $A$ equipped with the bracket $[f,g]:=f\circ g-g\circ f$, where $\circ$ denotes the composition in $\operatorname{End}(A)$.\\
The definitions of \emph{Lie subring} and \emph{ideal} for Lie rings resemble those for Lie algebras.
\begin{definition}
    A subgroup $\mathfrak{c}\leq \mathfrak{g}$ is a \emph{Lie subring}, denoted $\mathfrak{c}\sqsubseteq \mathfrak{g}$, if $\mathfrak{c}$ is closed under $[\cdot,\cdot]$. A Lie subring $\mathfrak{c}$ is an \emph{ideal} in $\mathfrak{g}$, denoted $\mathfrak{c}\mathrel{\unlhd}\mathfrak{g}$, if $[g,\mathfrak{c}]\leq \mathfrak{c}$ for all $g\in \mathfrak{g}$. 
\end{definition}
\emph{Simplicity} is a central concept in the theory of Lie rings. We also introduce a definable version of simplicity, namely \emph{definable simplicity}.
\begin{definition}
A Lie ring $\mathfrak{g}$ is \emph{simple} if $\mathfrak{g}$ has no non-trivial ideal, i.e., distinct from $\{0\}$ and $\mathfrak{g}$. A definable Lie ring $\mathfrak{g}$ is \emph{definably simple} if it has no definable proper ideals.
\end{definition}
Basic examples of ideals include the $\mathfrak{g}[n]=\{g\in \mathfrak{g}:\ ng=0\}\mathrel{\unlhd} \mathfrak{g}$ for every $n\in \mathbb{N}$. Therefore, for every simple Lie ring $\mathfrak{g}$, the group $(\mathfrak{g},+)$ is either of prime exponent or it is torsion-free. In the first case, we say that $\mathfrak{g}$ has \emph{characteristic $p$}, in the latter that $\mathfrak{g}$ has \emph{characteristic $0$}.
\begin{definition}\label{DefinChar}
    Let $\mathfrak{g}$ be a Lie ring. If $\mathfrak{g}=\mathfrak{g}[p]$, we say that $\mathfrak{g}$ has \emph{characteristic $p$}. If $(\mathfrak{g},+)$ is torsion-free, we say that $\mathfrak{g}$ has \emph{characteristic $0$}. The characteristic of a Lie ring $\mathfrak{g}$ is denoted by $\operatorname{char}(\mathfrak{g})$.
\end{definition}
We define the notion of a \emph{Lie ring homomorphism}.
\begin{definition}
    Let $\mathfrak{g}$ and $\mathfrak{h}$ be two Lie rings. A map $\phi:\mathfrak{g}\to \mathfrak{h}$ is a \emph{Lie ring homomorphism} if $\phi$ is a homomorphism of abelian groups from $(\mathfrak{g},+)$ to $(\mathfrak{h},+)$ such that $\phi([g,g'])=[\phi(g),\phi(g')]$ for every $g,g'\in \mathfrak{g}$.
\end{definition}
We now focus on the \emph{action of a Lie ring} on an abelian group.
\begin{definition}
 Let $\mathfrak{g}$ be a Lie ring. A \emph{$\mathfrak{g}$-module}, or simply a module, is a triple $(\mathfrak{g},V,\cdot)$ with $V$ an abelian group and $\cdot:\mathfrak{g}\times V\to V$ a map that induces a homomorphism of Lie rings $\phi:\mathfrak{g}\to \operatorname{End}(V)$ given by $\phi(g)(v)=g\cdot v$. The map $\phi$ is called the \emph{representation} of $\mathfrak{g}$ over $V$. Equivalently, we say that $\mathfrak{g}$ \emph{acts} on $V$. We usually omit $\cdot$.
\end{definition}
A basic example of Lie ring action is the \emph{adjoint action} of $\mathfrak{g}$ on itself.
\begin{definition}
    Let $\mathfrak{g}$ be a Lie ring. Then the action $\operatorname{ad}:\mathfrak{g}\times\mathfrak{g}\to \mathfrak{g}$ that sends $(g,g_1)\in \mathfrak{g}\times \mathfrak{g}$ to $[g,g_1]\in \mathfrak{g}$ is called the \emph{adjoint action} of $\mathfrak{g}$ on itself.
\end{definition}
In this article, all the modules considered are \emph{definable}: the Lie ring $\mathfrak{g}$, the group $V$, and the map $\cdot$ are all definable. Two fundamental classes of modules are \emph{faithful} and \emph{irreducible (or minimal)} modules.
\begin{definition}
Let $\mathfrak{g}$ be a definable Lie ring, and let $(\mathfrak{g},V)$ be a $\mathfrak{g}$-module with $\phi$ the associated representation. Then $(\mathfrak{g},V)$ is said to be \emph{faithful} if $\ker(\phi)$ is trivial.
\end{definition}
\begin{definition}
Let $V$ be a definable group, and $X\subseteq \operatorname{End}(V)$. A definable subgroup $W\leq V$ is a \emph{$X$-invariant} subgroup, or equivalently $W$ is \emph{$X$-invariant}, if for every $x\in X$, we have that $x(W)\leq W$. The group $V$ is \emph{$X$-minimal} if $V$ has no infinite definable $X$-invariant subgroups of infinite index.\\
Let $\mathfrak{g}$-module $(\mathfrak{g},V)$ is \emph{irreducible}, or equivalently \emph{minimal}, if $V$ is $\mathfrak{g}$-minimal (with $\mathfrak{g}$ seen as a subgroup of $\operatorname{End}(V)$).
\end{definition}
Central tools in the analysis of Lie ring modules are the \emph{centralisers} of a point.
\begin{definition}
    Let $\mathfrak{g}$ be a Lie ring, $V$ a $\mathfrak{g}$-module and $A\leq V$ a subgroup. Let $v\in V$, then the \emph{centraliser of $v$ in $\mathfrak{g}$ over $A$}, denoted $C_{\mathfrak{g}}(v/A)$, is the subgroup
    $$C_{\mathfrak{g}}(v/A):=\{g\in \mathfrak{g}: g\cdot v\in A\}.$$
    Let $g\in \mathfrak{g}$, the \emph{centraliser of $g$ in $V$ over $A$}, denoted $C_V(g/A)$, is the subgroup 
    $$C_V(g/A):=\{v\in V:\ gv\in A\}.$$
    If the subgroup $A=\{0\}$, it will be omitted. 
\end{definition}
We can also define the centralisers for a subgroup of $V$.
\begin{definition}
    Let $\mathfrak{g}$ be a Lie ring and $V$ a $\mathfrak{g}$-module. Let $A,B$ be subgroups of $V$ and $\mathfrak{h}$ a Lie subring of $\mathfrak{g}$. Define the \emph{centraliser of $B$ in $\mathfrak{g}$ over $A$} as the subgroup 
    $$C_{\mathfrak{g}}(B/A):=\bigcap_{b\in B} C_{\mathfrak{g}}(b/A).$$
    Define the \emph{centraliser of $\mathfrak{h}$ in $V$ over $A$} as the subgroup 
    $$C_V(\mathfrak{h}/A):=\bigcap_{h\in \mathfrak{h}}C_V(h/A).$$
\end{definition}
We now show that, if $A$ is a $\mathfrak{g}$-submodule of $V$, then $C_{\mathfrak{g}}(B/A)$ is an ideal for every $\mathfrak{g}$-submodule $B$. Moreover $C_V(\mathfrak{h}/A)$ is a $\mathfrak{g}$-submodule for every ideal $\mathfrak{h}$. An analogue result for \emph{almost centralisers} is proved in Subsection \ref{AlmostLiering} (Lemma \ref{DefZ}).
\begin{lemma}\label{IdeCenAct}
    Let $\mathfrak{g}$ be a Lie ring. Let $V$ be a $\mathfrak{g}$-module and $A\leq V$ a $\mathfrak{g}$-submodule.
    \begin{enumerate}
        \item If $B\leq V$ is a $\mathfrak{g}$-submodule, then $C_{\mathfrak{g}}(B/A)$ is an ideal in $\mathfrak{g}$;
        \item if $\mathfrak{h}\mathrel{\unlhd}\mathfrak{g}$ is an ideal of $\mathfrak{g}$, then $C_{V}(\mathfrak{h}/A)$ is a $\mathfrak{g}$-submodule. 
    \end{enumerate} 
\end{lemma}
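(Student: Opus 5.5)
Both parts reduce to the module axiom: for $g\in\mathfrak{g}$ and $v\in V$ the representation satisfies
$$[g,h]\cdot v = g\cdot(h\cdot v) - h\cdot(g\cdot v),$$
since $\phi$ is a Lie ring homomorphism into $\operatorname{End}(V)$ with bracket $f\circ g-g\circ f$. The plan is to apply this identity and then use only that $A$ and $B$ are $\mathfrak{g}$-invariant subgroups and that $\mathfrak{h}$ is an ideal. Throughout, $C_{\mathfrak{g}}(B/A)$ and $C_V(\mathfrak{h}/A)$ are subgroups, because each $C_{\mathfrak{g}}(b/A)$ and each $C_V(h/A)$ is a subgroup (the action is biadditive and $A$ is a subgroup), and intersections of subgroups are subgroups.

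For (1), take $c\in C_{\mathfrak{g}}(B/A)$, $g\in\mathfrak{g}$ and $b\in B$. We must show $[g,c]\cdot b\in A$. Expand it as $g\cdot(c\cdot b)-c\cdot(g\cdot b)$.
\begin{itemize}
\item The first term lies in $A$: $c\cdot b\in A$ by the choice of $c$, and $A$ is a $\mathfrak{g}$-submodule.
\item The second term lies in $A$: $g\cdot b\in B$ because $B$ is a submodule, and $c$ sends $B$ into $A$.
\end{itemize}
Hence $[g,c]\in C_{\mathfrak{g}}(B/A)$. This shows $[\mathfrak{g},C_{\mathfrak{g}}(B/A)]\leq C_{\mathfrak{g}}(B/A)$, which in particular gives closure under the bracket. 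So $C_{\mathfrak{g}}(B/A)$ is an ideal.

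For (2), take $v\in C_V(\mathfrak{h}/A)$, $g\in\mathfrak{g}$ and $h\in\mathfrak{h}$. We must show $h\cdot(g\cdot v)\in A$. Rearranging the identity gives
$$h\cdot(g\cdot v)=g\cdot(h\cdot v)-[g,h]\cdot v.$$
\begin{itemize}
\item The term $g\cdot(h\cdot v)$ lies in $A$: $h\cdot v\in A$ by the choice of $v$, and $A$ is $\mathfrak{g}$-invariant.
\item The term $[g,h]\cdot v$ lies in $A$: $[g,h]\in\mathfrak{h}$ because $\mathfrak{h}$ is an ideal, and $v$ is sent into $A$ by every element of $\mathfrak{h}$.
\end{itemize}
Hence $g\cdot v\in C_V(\mathfrak{h}/A)$, so $C_V(\mathfrak{h}/A)$ is $\mathfrak{g}$-invariant and therefore a submodule.

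There is no real obstacle. The only point needing care is the sign and order convention in the module identity, and the argument does not depend on it, because each term is shown to lie in $A$ separately.
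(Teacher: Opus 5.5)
Your proof is correct and takes essentially the same route as the paper: expand $[g,c]\cdot b$ (respectively $h\cdot(g\cdot v)$) using the representation identity, then check that each term lies in $A$ via the $\mathfrak{g}$-invariance of $A$ and $B$ and the ideal property of $\mathfrak{h}$. Your version is slightly more careful than the paper's: you use the correct sign $g\cdot(c\cdot b)-c\cdot(g\cdot b)$ where the paper writes a $+$, and you conclude $h\cdot(g\cdot v)\in A$, whereas the paper's $h(gc)=0$ is only valid when $A=0$.
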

\begin{proof}
    (1) It suffices to show that, for every $g\in \mathfrak{g}$ and $c\in C_{\mathfrak{g}}(B/A)$, the element $[g,c](b)$ belongs to $A$ for every $b\in B$. Since $\mathfrak{g}$ acts as a Lie ring on $V$,
    $$[g,c](b)=g(c(b))+c(g(b))\in g(A)+c(B)\leq A+A\leq A,$$
    as $A,B$ are $\mathfrak{g}$-submodules.\\
    (2) It suffices to check that, for every $g\in \mathfrak{g}$, $c\in C_V(\mathfrak{h}/A)$ and $h\in \mathfrak{h}$, $h(gc)=0$.
    By the Jacobi identity, 
    $$h(gc)=[h,g](c)+g(h(c))=0$$
    since $[h,g]\in \mathfrak{h}$.
\end{proof}
We can adapt the previous notions of centralisers to this case.
\begin{definition}
    Let $\mathfrak{g}$ be a Lie ring, $A\leq \mathfrak{g}$ a subgroup, and $g\in \mathfrak{g}$. Then the \emph{centraliser of $g$ over $A$} is the centraliser of $g$ in $\mathfrak{g}$ over $A$ for the adjoint action. We denote by $C_{\mathfrak{g}}(g/A)$ the centraliser of $g$ over $A$.
\end{definition}
Similarly, one can define the centraliser for a subgroup. Applying Lemma \ref{IdeCenAct} to the adjoint action, we obtain the following result.
\begin{lemma}\label{IdeCen}
    Let $\mathfrak{g}$ be a Lie ring. Let $H,K\mathrel{\unlhd}\mathfrak{g}$ be ideals. Then $C_{\mathfrak{g}}(H/K)$ is an ideal of $\mathfrak{g}$. 
\end{lemma}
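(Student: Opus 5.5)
The plan is to apply Lemma \ref{IdeCenAct} to the adjoint representation $(\mathfrak{g},\mathfrak{g},\operatorname{ad})$, so there are two things to check. First, a submodule for the adjoint action is exactly an ideal, so hypothesis (1) of Lemma \ref{IdeCenAct} is met by taking $V=\mathfrak{g}$, $A=K$ and $B=H$. Second, the centraliser $C_{\mathfrak{g}}(H/K)$ of the statement is by definition the set $\{g\in\mathfrak{g}:\ [g,h]\in K \text{ for all } h\in H\}$, which is precisely $C_{\mathfrak{g}}(B/A)$ for this action. Part (1) of Lemma \ref{IdeCenAct} then gives the conclusion at once.

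To avoid relying on the printed proof of Lemma \ref{IdeCenAct}, I would also verify the ideal property directly. The point needing care is the sign in the representation identity $[g,c](b)=g(c(b))-c(g(b))$, which that proof writes with a plus. Take $g\in\mathfrak{g}$, $c\in C_{\mathfrak{g}}(H/K)$ and $h\in H$. By the Jacobi identity,
$$[[g,c],h]=[g,[c,h]]-[c,[g,h]].$$
The first term lies in $K$: we have $[c,h]\in K$ and $K$ is an ideal. The second term also lies in $K$: we have $[g,h]\in H$ because $H$ is an ideal, and then $[c,[g,h]]\in K$ by the choice of $c$.

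It remains to note that $C_{\mathfrak{g}}(H/K)$ is a Lie subring. It is an additive subgroup by bilinearity of the bracket, and closure under the bracket is the special case $g\in C_{\mathfrak{g}}(H/K)$ of the computation above. No step here is a real obstacle. The only point of care is the sign convention just mentioned, and it is harmless because both terms lie in $K$.
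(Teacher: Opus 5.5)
Your proposal is correct and takes the paper's route: the paper proves this lemma in one line, by applying Lemma \ref{IdeCenAct}(1) to the adjoint action with $V=\mathfrak{g}$, $A=K$, $B=H$. Your direct Jacobi check is a sound self-contained supplement. You are also right that the printed proof of Lemma \ref{IdeCenAct} should read $[g,c](b)=g(c(b))-c(g(b))$, a sign slip that is harmless since both terms lie in $K$.
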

We define \emph{nilpotency} for Lie rings.
\begin{definition}
Let $\mathfrak{g}$ be a Lie ring. 
\begin{itemize}
    \item The Lie ring $\mathfrak{g}$ is \emph{abelian} if $\mathfrak{g}=Z(\mathfrak{g})$, where $Z(\mathfrak{g}):=\{g\in \mathfrak{g}:\ \forall g'\ [g,g']=0\}$;
    \item The Lie ring $\mathfrak{g}$ is \emph{nilpotent} if there exists a chain
        $$\mathfrak{g}=\mathfrak{g}_0\mathrel{\unrhd} \mathfrak{g}_1\mathrel{\unrhd}\ldots\mathrel{\unrhd}\mathfrak{g}_n=0$$
        of ideals of $\mathfrak{g}$ such that $[\mathfrak{g},\mathfrak{g}_i]\leq \mathfrak{g}_{i+1}$ for each $i\leq n-1$;
    
\end{itemize}
\end{definition}
The nilpotency is linked to the \emph{lower central series} $\big(\mathfrak{g}^{[n]}\big)_{n<\omega}$.
\begin{definition}
    Let $\mathfrak{g}$ be a Lie ring. We define $\mathfrak{g}^{[n]}$ by induction as follows:
    \begin{itemize}
        \item $\mathfrak{g}^{[0]}=\mathfrak{g}$;
        \item $\mathfrak{g}^{[n+1]}=[\mathfrak{g},\mathfrak{g}^{[n]}]$.
    \end{itemize}
\end{definition}
The proof of the following lemma is immediate.
\begin{lemma}
    Let $\mathfrak{g}$ be a Lie ring. $\mathfrak{g}$ is nilpotent if and only if there exists some $n\in\ \mathbb{N}$ such that $\mathfrak{g}^{[n]}=0$.
\end{lemma}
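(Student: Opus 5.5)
The statement is the last lemma displayed: $\mathfrak{g}$ is nilpotent if and only if $\mathfrak{g}^{[n]}=0$ for some $n\in\mathbb{N}$. I would prove the two implications separately. For both directions the key preliminary fact is that each $\mathfrak{g}^{[k]}$ is an ideal of $\mathfrak{g}$, and that the lower central series is the fastest-descending chain of the kind required in the definition of nilpotency.

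\emph{Ideals.} Here $\mathfrak{g}^{[k+1]}=[\mathfrak{g},\mathfrak{g}^{[k]}]$ denotes the additive subgroup generated by the brackets $[g,h]$ with $g\in\mathfrak{g}$ and $h\in\mathfrak{g}^{[k]}$. I would show by induction on $k$ that $\mathfrak{g}^{[k]}$ is an ideal and that $\mathfrak{g}^{[k+1]}\leq \mathfrak{g}^{[k]}$.
\begin{itemize}
\item For $k=0$ both claims are trivial.
\item Assume $\mathfrak{g}^{[k]}$ is an ideal. Then every generator $[g,h]$ of $\mathfrak{g}^{[k+1]}$ lies in $\mathfrak{g}^{[k]}$, so $\mathfrak{g}^{[k+1]}\leq\mathfrak{g}^{[k]}$.
\item Consequently $[\mathfrak{g},\mathfrak{g}^{[k+1]}]\leq[\mathfrak{g},\mathfrak{g}^{[k]}]=\mathfrak{g}^{[k+1]}$. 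By bilinearity this shows that $\mathfrak{g}^{[k+1]}$ is an ideal (in particular a Lie subring).
\end{itemize}

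\emph{The condition $\mathfrak{g}^{[n]}=0$ implies nilpotency.} Take the chain $\mathfrak{g}_i:=\mathfrak{g}^{[i]}$ for $i\leq n$. By the previous step this is a descending chain of ideals, with $\mathfrak{g}_0=\mathfrak{g}$ and $\mathfrak{g}_n=0$. It satisfies $[\mathfrak{g},\mathfrak{g}_i]=\mathfrak{g}_{i+1}$ by definition, so it witnesses nilpotency.

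\emph{Nilpotency implies $\mathfrak{g}^{[n]}=0$.} Let $\mathfrak{g}=\mathfrak{g}_0\unrhd\cdots\unrhd\mathfrak{g}_n=0$ be a chain with $[\mathfrak{g},\mathfrak{g}_i]\leq\mathfrak{g}_{i+1}$. I would show $\mathfrak{g}^{[i]}\leq\mathfrak{g}_i$ by induction on $i$.
\begin{itemize}
\item The case $i=0$ is an equality.
\item If $\mathfrak{g}^{[i]}\leq\mathfrak{g}_i$, then each generator $[g,h]$ of $\mathfrak{g}^{[i+1]}$ lies in $[\mathfrak{g},\mathfrak{g}_i]\leq\mathfrak{g}_{i+1}$. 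Since $\mathfrak{g}_{i+1}$ is a subgroup, the subgroup they generate also lies in $\mathfrak{g}_{i+1}$.
\end{itemize}
Taking $i=n$ gives $\mathfrak{g}^{[n]}\leq\mathfrak{g}_n=0$.

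There is no real obstacle here. The only point needing care is to remember that $[\mathfrak{g},\mathfrak{g}^{[k]}]$ is the subgroup generated by brackets, not just the set of brackets. That is why both inductions pass through generators and then use the fact that ideals and the $\mathfrak{g}_i$ are subgroups.
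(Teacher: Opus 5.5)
Your proof is correct, and it is the standard argument the paper has in mind when it calls the lemma immediate without writing out a proof. You show that each $\mathfrak{g}^{[k]}$ is an ideal with $\mathfrak{g}^{[k+1]}\leq\mathfrak{g}^{[k]}$ (needed because the paper's definition of nilpotency asks for a chain of ideals), use the lower central series as the witnessing chain, and prove $\mathfrak{g}^{[i]}\leq\mathfrak{g}_i$ by induction for the converse.
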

We now introduce the \emph{iterated centralisers}.
\begin{definition}
    Let $\mathfrak{g}$ be a Lie ring, $X\subseteq \mathfrak{g}$, and $H\leq \mathfrak{g}$. We denote by $C_{\mathfrak{g}}(X/H):=\{g'\in \mathfrak{g}:\ [X,g']\in H\}$ the \emph{centraliser of $X$ in $\mathfrak{g}$ over $H$}. We define the \emph{iterated centraliser} of a subset $X\subseteq \mathfrak{g}$ as follows:
    \begin{itemize}
        \item $C^1_{\mathfrak{g}}(X)=C_{\mathfrak{g}}(X)=\{g'\in \mathfrak{g}:\ [X,g']=0\}$;
        \item $C^{i+1}_{\mathfrak{g}}(X)=C_{\mathfrak{g}}(X/C^{i}_{\mathfrak{g}}(X))$.
    \end{itemize}
\end{definition}
In general, the centraliser $C_{\mathfrak{g}}(X/H)$ is only a subgroup of $\mathfrak{g}$. If $H$ is an ideal, then $C_{\mathfrak{g}}(X/H)$ is a Lie subring of $\mathfrak{g}$. Indeed, for every $c,c'\in C_{\mathfrak{g}}(X/H)$, 
$$[[c,c'],X]=[[X,c],c']+[[c',X],c]\leq [H,c']+[H,c]\leq H+H\leq H.$$
Typical examples of iterated centralisers, which are linked to the nilpotency of $\mathfrak{g}$, are the \emph{iterated centers of $\mathfrak{g}$}.
\begin{definition}
    We define recursively the \emph{$n$-center of $\mathfrak{g}$}, denoted $Z_n(\mathfrak{g})$, as follows:
    \begin{itemize}
        \item $Z_1(\mathfrak{g})=Z(\mathfrak{g})=C_{\mathfrak{g}}(\mathfrak{g})\mathrel{\unlhd}\mathfrak{g}$;
        \item $Z_{n+1}(\mathfrak{g})\mathrel{\unlhd} \mathfrak{g}$ such that $Z_{n+1}(\mathfrak{g})/Z_n(\mathfrak{g})=C_{\mathfrak{g}}(\mathfrak{g}/Z_n(\mathfrak{g}))$.
    \end{itemize}
\end{definition}
The following lemmas are direct applications of the definitions.
\begin{lemma}
    Let $\mathfrak{g}$ be a Lie ring. Then $Z_i(\mathfrak{g})$ is an ideal in $\mathfrak{g}$ for every $i\in \mathbb{N}$. Moreover, the Lie ring $\mathfrak{g}$ is nilpotent if and only if there exists some $n\in \mathbb{N}$ such that $Z_n(\mathfrak{g})=\mathfrak{g}$.
\end{lemma}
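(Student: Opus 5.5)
We prove the two claims separately, by induction along the recursive definition of $Z_n(\mathfrak{g})$.

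\emph{$Z_i(\mathfrak{g})$ is an ideal.} We argue by induction on $i$. For $i=1$, note that $Z(\mathfrak{g})=C_{\mathfrak{g}}(\mathfrak{g}/\{0\})$. Since $\mathfrak{g}$ and $\{0\}$ are ideals, Lemma \ref{IdeCen} shows that $Z(\mathfrak{g})$ is an ideal.

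For the inductive step, assume $Z_n(\mathfrak{g})$ is an ideal. By definition, $Z_{n+1}(\mathfrak{g})$ is the preimage of $C_{\mathfrak{g}}(\mathfrak{g}/Z_n(\mathfrak{g}))$ in $\mathfrak{g}$, that is,
$$Z_{n+1}(\mathfrak{g})=\{x\in\mathfrak{g}:\ [\mathfrak{g},x]\subseteq Z_n(\mathfrak{g})\}.$$
Applying Lemma \ref{IdeCen} with $H=\mathfrak{g}$ and $K=Z_n(\mathfrak{g})$ gives that this set is an ideal. The only point to check is that the quotient-style notation in the definition agrees with this preimage description, which is immediate.

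\emph{The characterisation of nilpotency.} Suppose first that $Z_n(\mathfrak{g})=\mathfrak{g}$ for some $n$. Set $\mathfrak{g}_i:=Z_{n-i}(\mathfrak{g})$, with the convention $Z_0(\mathfrak{g})=0$. Each $\mathfrak{g}_i$ is an ideal by the first part. Moreover, the definition of $Z_{n-i}(\mathfrak{g})$ gives
$$[\mathfrak{g},Z_{n-i}(\mathfrak{g})]\leq Z_{n-i-1}(\mathfrak{g}),$$
so $[\mathfrak{g},\mathfrak{g}_i]\leq\mathfrak{g}_{i+1}$. Thus $(\mathfrak{g}_i)_{i\leq n}$ is a chain witnessing that $\mathfrak{g}$ is nilpotent.

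Conversely, suppose $\mathfrak{g}=\mathfrak{g}_0\unrhd\cdots\unrhd\mathfrak{g}_n=0$ is a chain of ideals with $[\mathfrak{g},\mathfrak{g}_i]\leq\mathfrak{g}_{i+1}$. We show by induction on $j$ that
$$\mathfrak{g}_{n-j}\leq Z_j(\mathfrak{g}).$$
The case $j=0$ holds because $\mathfrak{g}_n=0$. For the inductive step, take $x\in\mathfrak{g}_{n-j-1}$. Then
$$[\mathfrak{g},x]\subseteq\mathfrak{g}_{n-j}\leq Z_j(\mathfrak{g}),$$
so $x\in Z_{j+1}(\mathfrak{g})$. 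Taking $j=n$ gives $\mathfrak{g}=\mathfrak{g}_0\leq Z_n(\mathfrak{g})$.

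There is no real obstacle in either direction. The only points needing care are the indexing conventions, in particular setting $Z_0(\mathfrak{g})=0$, and the fact that Lemma \ref{IdeCen} requires both arguments to be ideals; that requirement is exactly what the induction in the first part supplies.
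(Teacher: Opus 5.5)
Your proof is correct and is the direct verification from the definitions that the paper intends; the paper writes out no proof and only says the lemma is a direct application of the definitions, with Lemma \ref{IdeCen} available for the ideal property. One point you use tacitly is that $Z_k(\mathfrak{g})\leq Z_{k+1}(\mathfrak{g})$, which makes your chain $\mathfrak{g}_i=Z_{n-i}(\mathfrak{g})$ descending and gives the quotient notation in the definition a meaning; it follows at once from $Z_k(\mathfrak{g})$ being an ideal, i.e.\ $[\mathfrak{g},Z_k(\mathfrak{g})]\leq Z_k(\mathfrak{g})$.
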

\begin{lemma}\label{FinCenConn}
    Let $\mathfrak{g}$ be a connected definable Lie ring, and assume that $Z(\mathfrak{g})$ is finite. Then $Z(\mathfrak{g}/Z(\mathfrak{g}))$ is trivial.
\end{lemma}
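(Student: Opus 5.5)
We argue by contradiction, using only that $Z(\mathfrak{g})$ is a finite ideal and that $\mathfrak{g}$ is connected, i.e.\ has no proper definable subgroup of finite index. Write $Z=Z(\mathfrak{g})$. Let $\pi:\mathfrak{g}\to\mathfrak{g}/Z$ be the projection. Then $Z(\mathfrak{g}/Z)=Z_2(\mathfrak{g})/Z$, where
$$Z_2(\mathfrak{g})=\{x\in\mathfrak{g}:\ [x,\mathfrak{g}]\subseteq Z\}.$$
It therefore suffices to show that $Z_2(\mathfrak{g})=Z$, that is, that every $x$ with $[x,\mathfrak{g}]\subseteq Z$ is central.

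Fix such an $x$ and consider the map $\operatorname{ad}_x:\mathfrak{g}\to Z$, $g\mapsto [x,g]$. This is a definable homomorphism of abelian groups, by bilinearity of the bracket. Its image lies in the finite group $Z$, so its kernel $C_{\mathfrak{g}}(x)$ is a definable subgroup of $\mathfrak{g}$ of index at most $|Z|$, hence of finite index. By connectedness, $C_{\mathfrak{g}}(x)=\mathfrak{g}$, so $[x,\mathfrak{g}]=0$ and $x\in Z$.

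Since $x$ was arbitrary, $Z_2(\mathfrak{g})=Z$, and therefore $Z(\mathfrak{g}/Z(\mathfrak{g}))$ is trivial. The only point requiring care is the meaning of ``connected'': it must be taken as having no proper definable subgroup of finite index, as is standard in the definable setting. Definability of the kernel is clear, since it is defined by the formula $[x,g]=0$ with parameter $x$. No other earlier result is needed.
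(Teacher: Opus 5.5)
Your proof is correct and is exactly the routine verification the paper has in mind: it writes out no proof, only calling this lemma a direct application of the definitions, and your observation that $\ad_x$ maps $\mathfrak{g}$ into the finite group $Z(\mathfrak{g})$ is that argument. It follows that $C_{\mathfrak{g}}(x)$ is a definable subgroup of finite index, hence all of $\mathfrak{g}$ by connectedness; moreover, $C_{\mathfrak{g}}(x)$ is even an ideal here, since $[x,[g,a]]=[[x,g],a]=0$ for $a\in C_{\mathfrak{g}}(x)$, so the argument also works under weaker readings of ``connected''.
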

We now introduce the notion of \emph{normaliser} of a Lie subring.
\begin{definition}
    Let $\mathfrak{g}$ be a Lie ring and $\mathfrak{h}\sqsubseteq \mathfrak{g}$ a Lie subring. The \emph{normaliser of $\mathfrak{h}$ in $\mathfrak{g}$}, denoted $N_{\mathfrak{g}}(\mathfrak{h})$, is the Lie subring 
    $$N_{\mathfrak{g}}(\mathfrak{h})=\{g\in \mathfrak{g}:\ [g,\mathfrak{h}]\leq \mathfrak{h}\}.$$
\end{definition}
Cartan Lie algebras were introduced by \'Elie Cartan in his doctoral thesis and, since then, they have been crucial in the study of Lie algebras. Cartan Lie subrings naturally extend these objects to the setting of Lie rings.
\begin{definition}
    Let $\mathfrak{g}$ be a Lie ring and $\mathfrak{h}\sqsubseteq \mathfrak{g}$ a Lie subring. $\mathfrak{h}$ is said to be a \emph{Cartan Lie subring} if $\mathfrak{h}$ is nilpotent and \emph{self-normalising}, i.e., $N_{\mathfrak{g}}(\mathfrak{h})=\mathfrak{h}$.
\end{definition}
\subsection{Almost Lie ring theory}\label{AlmostLiering}
Section 4 of \cite{invitti2025lie} introduces the natural generalisations of the previous notions, and it establishes some fundamental properties of \emph{hereditarily $\widetilde{\mathfrak{M}}_c$-Lie rings}.
\begin{definition}
    Let $\mathfrak{g}$ be a definable Lie ring. Then $\mathfrak{g}$ is \emph{hereditarily $\widetilde{\mathfrak{M}}_c$} if, for any definable subgroup $N$ in $\mathfrak{g}$, there exist $n,d<\omega$ such that there cannot exist $g_1,\ldots,g_n\in \mathfrak{g}$ with $|C_{\mathfrak{g}}(g_1,\ldots,g_i/N):C_{\mathfrak{g}}(g_1,\ldots,g_i,g_{i+1}/N)|\geq d$ for every $i\leq n-1$.
\end{definition}
It follows from Theorem \ref{ThmB} that the results of Section 4 of \cite{invitti2025lie} can be applied to the action of a finite-dimensional Lie ring.\\
We define the \emph{almost centraliser} of a module $(\mathfrak{g},V)$.
\begin{definition}
    Let $\mathfrak{g}$ be a Lie ring and $(\mathfrak{g},V)$ a $\mathfrak{g}$-module. Let $A,B\leq V$ and $H\leq \mathfrak{g}$. We define:
\begin{itemize}
    \item the \emph{almost centraliser of $B$ in $\mathfrak{g}$ over $A$}, denoted $\widetilde{C}_{\mathfrak{g}}(B/A)$, as 
    $$\widetilde{C}_{\mathfrak{g}}(B/A)=\{g\in \mathfrak{g}:\ g\cdot B\apprle A\}.$$
    \item the \emph{almost centraliser of $H$ in $V$ over $A$}, denoted $\widetilde{C}_V(H/A)$ as 
    $$\widetilde{C}_V(H/A)=\{v\in V:\ H\cdot v\apprle A\}.$$
\end{itemize}
\end{definition}
The following lemma follows easily from \cite[Lemma 4.10]{invitti2025lie}.
\begin{lemma}\label{g/Z(g)}
    Let $\mathfrak{g}$ be a finite-dimensional Lie ring acting on the definable abelian group $V$ of finite dimension. Assume that $\widetilde{C}_V(\mathfrak{g})$ is finite. Then $\widetilde{C}_{V/\widetilde{C}_V(\mathfrak{g})}(\mathfrak{g})$ is trivial.
\end{lemma}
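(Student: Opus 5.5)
The approach is a direct verification from the definitions; the finiteness of $Z:=\widetilde{C}_V(\mathfrak{g})$ does essentially all the work. Throughout, for $v\in V$, I write $\mathfrak{g}\cdot v=\{g\cdot v:\ g\in\mathfrak{g}\}$. This is a subgroup of $V$, being the image of the homomorphism $g\mapsto g\cdot v$. By definition, $v\in Z$ if and only if $\mathfrak{g}\cdot v\apprle\{0\}$, i.e. $\mathfrak{g}\cdot v$ is finite.

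First I check that $Z$ is a definable $\mathfrak{g}$-submodule, so that $V/Z$ is a definable $\mathfrak{g}$-module and the statement makes sense.
\begin{itemize}
\item Definability is immediate, since $Z$ is finite by hypothesis.
\item $Z$ is a subgroup: for $v,w\in Z$ we have $\mathfrak{g}\cdot(v-w)\subseteq \mathfrak{g}\cdot v+\mathfrak{g}\cdot w$, which is finite.
\item $Z$ is $\mathfrak{g}$-invariant: for $h\in\mathfrak{g}$ and $v\in Z$, the module identity gives $x\cdot(h\cdot v)=[x,h]\cdot v+h\cdot(x\cdot v)$. Hence $\mathfrak{g}\cdot(h\cdot v)\subseteq \mathfrak{g}\cdot v+h(\mathfrak{g}\cdot v)$, which is again finite, so $h\cdot v\in Z$.
\end{itemize}

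Next, let $\bar v=v+Z\in\widetilde{C}_{V/Z}(\mathfrak{g})$. This means the subgroup $\mathfrak{g}\cdot\bar v=(\mathfrak{g}\cdot v+Z)/Z$ of $V/Z$ is almost contained in the trivial subgroup, i.e. it is finite. Since $Z$ is finite, the group $\mathfrak{g}\cdot v+Z$ is then finite, and so $\mathfrak{g}\cdot v$ is finite. By the characterisation above, $v\in Z$, hence $\bar v=0$. Therefore $\widetilde{C}_{V/\widetilde{C}_V(\mathfrak{g})}(\mathfrak{g})$ is trivial.

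The only step needing any care is confirming that $Z$ is a submodule, so that the quotient module is defined. That is the Jacobi-type computation above, and it uses only finiteness of $\mathfrak{g}\cdot v$, not the finite-dimensionality assumption. If one instead wanted the analogous statement with $\widetilde{C}_V(\mathfrak{g})$ merely definable and not finite, the hereditarily $\widetilde{\mathfrak{M}}_c$ property (via Theorem~\ref{ThmB}) would be needed to obtain uniform bounds, as in \cite[Lemma 4.10]{invitti2025lie}. Under the present finiteness hypothesis, however, the argument above suffices.
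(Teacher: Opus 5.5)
\paragraph{Verdict.} Your proof is correct. It takes a different route from the paper, which gives no argument of its own and simply derives the lemma from \cite[Lemma 4.10]{invitti2025lie}. That lemma belongs to the theory of almost centralisers for hereditarily $\widetilde{\mathfrak{M}}_c$ modules, and it applies here because finite-dimensional modules have that property (via Theorem~\ref{ThmB}).

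\paragraph{Comparison with the paper.} You bypass that machinery entirely. After checking that $Z=\widetilde{C}_V(\mathfrak{g})$ is a $\mathfrak{g}$-submodule (the case $\mathfrak{h}=\mathfrak{g}$ of Lemma~\ref{DefZ}), the whole content is the identity $\mathfrak{g}\cdot\bar v=(\mathfrak{g}\cdot v+Z)/Z$, together with the fact that an extension of a finite group by a finite group is finite.
\begin{itemize}
\item What your route buys: a self-contained argument, and the observation that once $\widetilde{C}_V(\mathfrak{g})$ is assumed finite, no finite-dimensionality, definability or chain-condition result is needed.
\item What the paper's citation buys: brevity, and a link to the general theory developed in the earlier work.
\end{itemize}
Your argument is also all that is needed where the paper later invokes ``the same proof'' for iterated almost centralisers. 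There it is applied to the module $A/\widetilde{C}^{n-1}_A(\mathfrak{g})$, whose almost centraliser is finite by the maximal-dimension choice. Finite-dimensionality enters only in making that choice and in guaranteeing definability of the quotient.

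\paragraph{A caveat on your closing aside.} Without the finiteness hypothesis the conclusion is not merely harder to prove; it is false. Take an infinite definable field $K$, and let $\mathfrak{g}=K$, with trivial bracket, act on $V=K^2$ by $t\cdot(a,b)=(tb,0)$. Then $\widetilde{C}_V(\mathfrak{g})=K\times\{0\}$, while $\mathfrak{g}$ acts trivially on the nonzero quotient $V/(K\times\{0\})$. So that remark should be dropped or reformulated. It does not affect your proof of the lemma as stated.
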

Two important properties of the almost centraliser of the action of a finite-dimensional Lie ring are \emph{definability} (\cite[Lemma 4.3]{invitti2025lie}) and \emph{symmetry} (\cite[Lemma 4.7]{invitti2025lie}).
\begin{lemma}\label{DefZ}
    Let $(\mathfrak{g},V)$ be a module. For every $\mathfrak{g}$-invariant subgroup $W\leq V$ and ideal $\mathfrak{h}\mathrel{\unlhd}\mathfrak{g}$, the subgroup $\widetilde{C}_V(\mathfrak{h})$ is a $\mathfrak{g}$-submodule and $\widetilde{C}_{\mathfrak{g}}(W)$ is a $\mathfrak{g}$-ideal. Moreover, if $(\mathfrak{g},V)$ is a finite-dimensional module and both $W$ and $\mathfrak{h}$ are definable, then both $\widetilde{C}_{\mathfrak{g}}(W)$ and $\widetilde{C}_V(\mathfrak{h})$ are definable.
\end{lemma}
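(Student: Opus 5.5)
We have to prove Lemma \ref{DefZ}: $\widetilde{C}_V(\mathfrak{h})$ is a $\mathfrak{g}$-submodule, $\widetilde{C}_{\mathfrak{g}}(W)$ is an ideal, and both are definable in the finite-dimensional case.

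\textbf{Subgroup property.} I first check that both sets are subgroups. If $v,w\in\widetilde{C}_V(\mathfrak{h})$, then for every $h\in\mathfrak{h}$ we have $h(v-w)=hv-hw$. Hence the image $\mathfrak{h}\cdot(v-w)$ lies in $\mathfrak{h}v+\mathfrak{h}w$. A sum of two finite subgroups is finite, so $v-w\in\widetilde{C}_V(\mathfrak{h})$. The same computation works for $\widetilde{C}_{\mathfrak{g}}(W)$, using $(g-g')W\subseteq gW+g'W$. Here $g\cdot W\apprle 0$ just means that $gW$ is finite.

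\textbf{Submodule property.} Let $v\in\widetilde{C}_V(\mathfrak{h})$ and $g\in\mathfrak{g}$. For $h\in\mathfrak{h}$ the action gives $h(gv)=g(hv)+[h,g]v$. Since $\mathfrak{h}$ is an ideal, $[h,g]\in\mathfrak{h}$, so
$$\mathfrak{h}\cdot(gv)\subseteq g(\mathfrak{h}v)+\mathfrak{h}v.$$
Both terms on the right are finite, so $gv\in\widetilde{C}_V(\mathfrak{h})$.

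\textbf{Ideal property.} Let $c\in\widetilde{C}_{\mathfrak{g}}(W)$ and $g\in\mathfrak{g}$. Then
$$[g,c]W\subseteq g(cW)+c(gW)\subseteq g(cW)+cW,$$
using that $W$ is $\mathfrak{g}$-invariant. Both terms are finite, so $[g,c]\in\widetilde{C}_{\mathfrak{g}}(W)$. Closure under the bracket follows immediately.

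\textbf{Definability.} This is the main obstacle, since ``$hv$ ranges over a finite set'' is not first-order a priori. The set $\mathfrak{h}\cdot v$ is the image of $\mathfrak{h}$ under the definable homomorphism $h\mapsto hv$. It is finite exactly when the kernel $C_{\mathfrak{h}}(v)$ has finite index in $\mathfrak{h}$, equivalently when $\dim C_{\mathfrak{h}}(v)=\dim\mathfrak{h}$, by the fibration and algebraicity axioms.

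I therefore need a uniform bound: some $n$ such that $|\mathfrak{h}:C_{\mathfrak{h}}(v)|<\omega$ implies $|\mathfrak{h}:C_{\mathfrak{h}}(v)|\le n$. The plan is to use compactness together with Theorem \ref{ThmB} applied to the uniformly definable family $\{C_{\mathfrak{h}}(v)\}_{v\in V}$. Suppose there are no such bound. Then we can build arbitrarily long sequences $v_1,\dots,v_N$ with each $C_{\mathfrak{h}}(v_i)$ of finite but large index. Intersecting these, we choose successive elements so that each new index jump is at least $k$; this is possible because the intersections stay of finite index. Note that the index of a finite intersection of finite-index subgroups is bounded by the product of the indices. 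This yields long chains with steps of size at least $k$, contradicting Theorem \ref{ThmB}.

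I expect the delicate point to be choosing each $v_{i+1}$ so that its centraliser cuts the previous intersection by at least $k$. Choosing $v_{i+1}$ with very large index forces a large cut, since $|\mathfrak{h}:C_{\mathfrak{h}}(v_{i+1})|\le|\mathfrak{h}:D|\cdot|D:D\cap C_{\mathfrak{h}}(v_{i+1})|$ for the current intersection $D$. Given the bound $n$, we get
$$\widetilde{C}_V(\mathfrak{h})=\{v:\ |\mathfrak{h}:C_{\mathfrak{h}}(v)|\le n\},$$
which is definable. For $\widetilde{C}_{\mathfrak{g}}(W)$ the argument is symmetric: $gW$ is finite iff $C_W(g)$ has finite index in $W$, and we apply Theorem \ref{ThmB} to the family $\{C_W(g)\}_{g\in\mathfrak{g}}$ in $W$.
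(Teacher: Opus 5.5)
Your proposal is correct and follows essentially the route the paper relies on. The paper gives no proof here and cites \cite[Lemma 4.3]{invitti2025lie}: the closure properties are the same direct computations you do (cf.\ Lemma \ref{IdeCenAct}), and definability comes from the chain condition on centralisers that the paper derives from Theorem \ref{ThmB}; your uniform-bound variant of that argument (pick each $v_{i+1}$ with $|\mathfrak{h}:C_{\mathfrak{h}}(v_{i+1})|>|\mathfrak{h}:D|(k-1)$) is sound, needs no compactness, and gives $\widetilde{C}_V(\mathfrak{h})=\{v:\ |\mathfrak{h}v|\le n\}$.
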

\begin{lemma}\label{symact}
    Let $\mathfrak{g}$ be a Lie ring acting on an abelian group $V$, both definable in a finite-dimensional theory. Let $H\subseteq \mathfrak{g}$ be a definable subgroup, and let $A,N\leq V$ be definable subgroups. Then $H\apprle \widetilde{C}_{\mathfrak{g}}(A/N)$ if and only if $A\apprle \widetilde{C}_V(H/N)$. 
\end{lemma}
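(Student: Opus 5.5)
The plan is to reduce the statement to a symmetric fact about a biadditive map and then use the uniform chain condition of Theorem~\ref{ThmB} on both sides. The action induces a biadditive map
$$\beta: H\times A\to V/N,\qquad (h,a)\mapsto h\cdot a+N,$$
which need not be well defined modulo $N$ unless $N$ is $H$-invariant, so we only ever use the subgroups $C_A(h/N)=\{a\in A:\ ha\in N\}$ and $C_H(a/N)=\{h\in H:\ ha\in N\}$. In this notation, $h\in\widetilde{C}_{\mathfrak{g}}(A/N)$ means $|A:C_A(h/N)|<\omega$, and $a\in\widetilde{C}_V(H/N)$ means $|H:C_H(a/N)|<\omega$. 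The two sides of the statement are then exact mirror images, with the roles of $H$ and $A$ exchanged. Since Theorem~\ref{ThmB} applies both to uniformly definable families of subgroups of $A$ and of $H$, it suffices to prove one direction, say $H\apprle\widetilde{C}_{\mathfrak{g}}(A/N)\Rightarrow A\apprle\widetilde{C}_V(H/N)$.

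Assume $H_0:=H\cap\widetilde{C}_{\mathfrak{g}}(A/N)$ has finite index in $H$. The first step is to uniformise. The family $\{C_A(h/N)\}_{h\in H_0}$ is uniformly definable, so Theorem~\ref{ThmB} gives $n,k$ bounding the length of chains of intersections with successive indices $\geq k$. Choose $h_1,\ldots,h_m\in H_0$ with $m\le n$ forming a maximal such chain, and set $D:=\bigcap_{i\le m}C_A(h_i/N)$.
\begin{itemize}
\item $D$ has finite index in $A$, because each $C_A(h_i/N)$ has finite index in $A$ (as $h_i\in H_0$).
\item By maximality, $|D:D\cap C_A(h/N)|<k$ for every $h\in H_0$.
\end{itemize}
Thus every $h\in H_0$ moves $D$ into at most $k$ cosets of $N$, uniformly in $h$.

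The second step, which I expect to be the main obstacle, is to transfer this uniform one-sided bound to the other side. We want to show that $D\apprle\widetilde{C}_V(H_0/N)$, which suffices since $H_0$ has finite index in $H$. My first attempt is a counting argument in the spirit of B.~H.~Neumann's lemma. Suppose $a\in D$ has $|H_0:C_{H_0}(a/N)|\ge t$, and pick $h'_1,\ldots,h'_t\in H_0$ whose values $h'_j a$ are pairwise distinct modulo $N$. The homomorphism
$$D\to\prod_{j\le t}\bigl(D/(D\cap C_D(h'_j/N))\bigr)$$
shows that $E_t:=\bigcap_{j\le t}C_D(h'_j/N)$ has index at most $k^t$ in $D$. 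We then use $E_t$ to separate the elements $a$ with large $H_0$-orbit: if infinitely many cosets of $\widetilde{C}_V(H_0/N)\cap D$ were present in $D$, we would iteratively choose elements $a_1,a_2,\ldots$ and elements of $H_0$ witnessing large orbits. This would produce an arbitrarily long chain
$$C_{H_0}(a_1/N)\ge C_{H_0}(a_1,a_2/N)\ge\cdots$$
of uniformly definable subgroups of $H_0$ with all successive indices $\geq d$. Applying Theorem~\ref{ThmB} now on the $H$-side gives a contradiction.

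Definability of $\widetilde{C}_V(H_0/N)$ is needed to make the index statements meaningful; it follows as in Lemma~\ref{DefZ}, equivalently from the uniform bound just obtained. If the Neumann-type counting does not go through directly, the fallback is a dimension argument on the definable set
$$\{(h,a)\in H_0\times D:\ ha\in N\}.$$
By the fibration axioms, its dimension computed over $H_0$ is $\dim H_0+\dim D$, since every fibre $C_D(h/N)$ has finite index in $D$. Computing the same dimension over $D$ then forces generic $a\in D$ to have fibres $C_{H_0}(a/N)$ of full dimension. Combining this with the $\omega$-DCC-type control coming from Theorem~\ref{ThmB} (as in Lemma~\ref{ucc}) should upgrade ``full dimension'' to ``finite index'' on a finite-index subgroup of $D$.
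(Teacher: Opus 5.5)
The paper does not prove this lemma itself; it quotes it as Lemma 4.7 of the earlier work on finite-dimensional Lie rings. Your proposal therefore has to stand on its own, and it has a genuine gap at exactly the step you flag as the main obstacle.

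\textbf{The main route.} Nothing in your construction controls the chain you write, $C_{H_0}(a_1/N)\geq C_{H_0}(a_1,a_2/N)\geq\cdots$. The index $|C_{H_0}(a_1,\ldots,a_i/N):C_{H_0}(a_1,\ldots,a_{i+1}/N)|$ counts the classes $\ell a_{i+1}+N$ with $\ell\in C_{H_0}(a_1,\ldots,a_i/N)$. That subgroup may have infinite index in $H_0$, and knowing that $a_{i+1}$ has a large $H_0$-orbit says nothing about this count.

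The missing idea is to choose each new element inside the subgroup killed by all previous witnesses, and then to read the chain backwards. Let $n,d$ be given by Theorem~\ref{ThmB} for $\{C_H(v/N)\}_{v\in V}$, and suppose $Y:=A\cap\widetilde{C}_V(H/N)$ has infinite index in $A$. Suppose you have chosen $a_1,\ldots,a_i$ and sets $W_1,\ldots,W_i\subseteq H_0$ of size $d$, where the elements of $W_l$ are pairwise incongruent modulo $C_H(a_l/N)$.

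\begin{itemize}
\item The subgroup $E:=\bigcap_{l\leq i}C_A(W_l/N)$ has finite index in $A$, because each $w\in W_l$ lies in $\widetilde{C}_{\mathfrak{g}}(A/N)$.
\item Hence $E\not\subseteq Y$. Pick $a_{i+1}\in E\setminus Y$, and then $W_{i+1}$ as before; this is possible since $|H_0:H_0\cap C_H(a_{i+1}/N)|$ is infinite.
\item Now $W_l\subseteq C_H(a_m,\ldots,a_{l+1}/N)$, and its elements are pairwise incongruent modulo $C_H(a_m,\ldots,a_l/N)$.
\end{itemize}

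So it is the reversed chain $C_H(a_m/N)\geq C_H(a_m,a_{m-1}/N)\geq\cdots\geq C_H(a_m,\ldots,a_1/N)$ that has all successive indices $\geq d$. For $m>n$ this contradicts Theorem~\ref{ThmB}. This argument uses only that each $C_A(w/N)$ with $w\in H_0$ has finite index. Your Step 1 and the bound $k^t$ play no role; framing the task as transferring a uniform one-sided bound hides the actual mechanism.

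\textbf{The fallback.} It misplaces the difficulty. The upgrade from full dimension to finite index needs no chain condition: for a definable subgroup $L\leq G$, fibration over $G/L$ gives $\dim G=\dim L+\dim(G/L)$, so $\dim L=\dim G$ forces $G/L$ to be finite. What the dimension count does need is that $H_0$ and $Y=D\cap\widetilde{C}_V(H_0/N)$ are definable. Only then can $X$ be split along $Y$ and the Union and fibration axioms applied; otherwise ``generic $a\in D$'' cannot be made precise.

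Your justification of definability fails on both counts:
\begin{itemize}
\item Lemma~\ref{DefZ} concerns $\mathfrak{g}$-invariant subgroups and ideals, which $A$, $N$ and $H$ need not be.
\item Deriving it ``from the uniform bound just obtained'' is circular, since that bound is what is being proved.
\end{itemize}

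Definability does follow from Theorem~\ref{ThmB} applied to the subfamily $\{C_H(a/N)\}_{a\in Y}$. A maximal chain with all $a_j\in Y$ gives a definable $L=C_H(a_1,\ldots,a_r/N)$ of finite index in $H$ with $Y=\{a\in D:\ |L:C_L(a/N)|<d\}$. Your Step 1 likewise yields $H_0=\{h\in H:\ |D:C_D(h/N)|<k\}$. With these two facts the dimension count closes.

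A minor point: $\beta$ is well defined whether or not $N$ is $H$-invariant, since its second argument is an element of $A$, not a coset.
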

A key generalisation of $\mathfrak{g}$-modules are \emph{almost invariant $\mathfrak{g}$-modules}.
\begin{definition}\label{Def:AlmInv}
Let $\mathfrak{g}$ be a Lie ring, $V$ a $\mathfrak{g}$-module and $W\leq V$ a subgroup. The Lie subring
$$\widetilde{Stab}_{\mathfrak{g}}(W)=\{g\in \mathfrak{g}:\ g W\apprle W\}$$
is called the \emph{almost stabiliser} of $W$. The subgroup $W$ is \emph{almost $\mathfrak{g}$-invariant} if $\widetilde{Stab}_{\mathfrak{g}}(W)=\mathfrak{g}$.
\end{definition}
If the action is represented by the adjoint action of a Lie ring on itself, the almost stabiliser is called \emph{almost normaliser}.
\begin{definition}
    Let $\mathfrak{g}$ be a Lie ring, and $H\leq \mathfrak{g}$. Then the almost stabiliser of $H$ for the adjoint action of $\mathfrak{g}$ is called the \emph{almost normaliser of $H$ in $\mathfrak{g}$}, and denoted by $\widetilde{N}_{\mathfrak{g}}(H)$.
\end{definition}
The notion of \emph{(strongly) absolutely minimal module} plays a central role in the analysis of finite-dimensional Lie rings.
\begin{definition}
    A definable module $(\mathfrak{g},V)$ is \emph{absolutely minimal} if there is no definable infinite almost $\mathfrak{g}$-invariant subgroup of infinite index in $V$. A definable module $(\mathfrak{g},V)$ is \emph{strongly absolutely minimal} if no definable infinite subgroup $W\leq V$ of infinite index in $V$ satisfies $\widetilde{Stab}_{\mathfrak{g}}(W)\sim \mathfrak{g}$.
\end{definition}
The following characterisation of strongly absolutely minimal modules (\cite[Lemma 5.6]{invitti2025lie}) is essential in the proof of Theorem \ref{TrivialityH1}.
\begin{lemma}\label{LinAlmAbe2}
    Let $\mathfrak{g}$ be a Lie ring acting on the abelian group $A$, both definable in a finite-dimensional theory. Assume that $A$ is minimal for every definable Lie subring of finite index in $\mathfrak{g}$. Then $A$ is strongly absolutely $\mathfrak{g}$-minimal.
\end{lemma}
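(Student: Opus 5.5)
We argue by contradiction. Suppose $W\leq A$ is a definable infinite subgroup of infinite index in $A$ with $\mathfrak{h}:=\widetilde{Stab}_{\mathfrak{g}}(W)\sim\mathfrak{g}$. Since $\mathfrak{h}\leq\mathfrak{g}$, commensurability means that $\mathfrak{h}$ has finite index in $\mathfrak{g}$. By Definition \ref{Def:AlmInv}, $\mathfrak{h}$ is a Lie subring. The plan is to show that $\mathfrak{h}$ is definable, or at least contains a definable Lie subring $\mathfrak{h}_0$ of finite index in $\mathfrak{g}$. We then produce from $W$ a definable $\mathfrak{h}_0$-invariant subgroup $W^\ast\leq A$ commensurable with $W$. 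Such a $W^\ast$ is infinite and of infinite index in $A$, which contradicts the hypothesis that $A$ is minimal for every definable Lie subring of finite index in $\mathfrak{g}$.

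\textbf{Definability of the stabiliser.} For $n\in\mathbb{N}$, set $S_n:=\{g\in\mathfrak{g}:\ |gW+W:W|\leq n\}$. Each $S_n$ is definable, we have $S_n+S_m\subseteq S_{nm}$ and $-S_n=S_n$, and $\mathfrak{h}=\bigcup_n S_n$. Since $\mathfrak{h}$ has finite index, we work in a sufficiently saturated model and apply compactness. There are only finitely many cosets of $\mathfrak{h}$, and the chain $(S_n)$ exhausts $\mathfrak{h}$, so a single $S_n$ must already be generic in $\mathfrak{h}$: finitely many translates of it cover $\mathfrak{h}$. Then $\mathfrak{h}=S_{n}+\cdots+S_n$ ($k$ times) for some $k$, which lies in $S_{n^k}$. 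Hence $\mathfrak{h}=S_{n^k}$ is definable and $|gW+W:W|$ is uniformly bounded by some $N$ for $g\in\mathfrak{h}$. If this step only yields a definable subring $\mathfrak{h}_0\leq\mathfrak{h}$ of finite index in $\mathfrak{g}$ with a uniform bound, that suffices, and we replace $\mathfrak{h}$ by $\mathfrak{h}_0$.

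\textbf{Making $W$ invariant (main obstacle).} For $g\in\mathfrak{h}$, put $W_g:=W\cap C_A(g/W)=\{w\in W:\ gw\in W\}$. The family $\{W_g\}_{g\in\mathfrak{h}}$ is uniformly definable, and each member has index at most $N$ in $W$ by the uniform bound. By Theorem \ref{ThmB} applied to this family, any finite intersection of the $W_g$ is, up to a uniformly bounded index, equal to an intersection of at most $n$ of them. A standard Baldwin--Saxl/compactness argument then shows that $W_1:=\bigcap_{g\in\mathfrak{h}}W_g$ is a finite sub-intersection. So $W_1$ is definable, has finite index in $W$, and satisfies $\mathfrak{h}W_1\leq W$.

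Next, $W_1$ is again almost $\mathfrak{h}$-invariant, with a uniform bound, so the construction can be iterated. The difficulty is that the descending chain $W\geq W_1\geq W_2\geq\cdots$ consists of finite-index steps, and nothing so far prevents the intersection from becoming finite. To control this, we work directly with the uniformly definable family
$$\{w\in W:\ x_1\cdots x_r w\in W \text{ for all } r\leq m\},\qquad x_i\in\mathfrak{h},$$
indexed by finite words in $\mathfrak{h}$. Using Theorem \ref{ThmB} and Lemma \ref{ucc} (UCC), the aim is to show that the indices stay uniformly bounded along all words. Then the intersection $W^\ast$ over all words is a definable finite-index subgroup of $W$, and by construction $\mathfrak{h}W^\ast\leq W^\ast$.

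An alternative route, if bounding the indices fails, goes upward instead: let $W^{+}$ be the $\mathfrak{h}$-submodule generated by $W$. Each step $W\mapsto W+\mathfrak{h}W$ adds only finitely much, and one would use the dual chain condition, with the symmetry of Lemma \ref{symact} as the expected tool, to show that $W^{+}$ is definable and $W^{+}\sim W$. In either version, $W^\ast$ (or $W^{+}$) is an infinite, definable, $\mathfrak{h}$-invariant subgroup of infinite index in $A$, contradicting the minimality of $A$ for the definable finite-index subring $\mathfrak{h}$.
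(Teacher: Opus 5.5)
The paper does not prove this lemma in the text; it is quoted from Lemma 5.6 of the companion paper. So your argument has to stand on its own, and it has a genuine gap in the second step.

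Your first step is fine in a saturated model: it is just the definability of the almost stabiliser $\mathfrak{h}$, with a uniform bound $N$.

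The gap is the step ``Making $W$ invariant''. Theorem \ref{ThmB} does not say that finite intersections of the $W_g$ are, up to bounded index, intersections of at most $n$ of them. It only forbids long chains of intersections in which \emph{every} step has index at least some $k$, and it is silent when all the $W_g$ have index at most $N<k$. Baldwin--Saxl is an NIP phenomenon, which finite-dimensional theories do not have in general.

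The claim you need is in fact false. Let $V$ be an infinite $\mathbb{F}_p$-vector space with a non-degenerate alternating form $\omega$; this is supersimple of SU-rank $1$, hence finite-dimensional. Let $\mathfrak{g}=V$ with zero bracket act on $A=V\oplus\mathbb{F}_p\oplus V$ by $x\cdot(v,c,v')=(0,\omega(x,v),0)$. Any product of two elements acts as $0$, so this is a module. Take $W=V\oplus 0\oplus 0$. Then $W$ is infinite of infinite index and $|xW+W:W|\le p$ for all $x$. But $W_x=x^{\perp}\oplus 0\oplus 0$, so $\bigcap_x W_x=0$, which has infinite index in $W$. This $A$ is not minimal, but your Step 2 never uses minimality. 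Note also that $W_1=\{w\in W:\ \mathfrak{h}w\subseteq W\}$ is trivially definable once $\mathfrak{h}$ is. Its having finite index in $W$ is the whole content, and that is exactly what fails.

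The remainder does not close the gap. The uniform bound along all words is stated only as an aim. In the upward variant, the claim that $W\mapsto W+\mathfrak{h}W$ adds only finitely much is unjustified. Each $gW$ is finite modulo $W$, but $\mathfrak{h}W$ is a sum over infinitely many $g$. Even if each stage added only finitely much, the increasing chain need not stabilise.

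More fundamentally, almost invariance alone does not give an invariant subgroup commensurable with $W$. Take the shift $te_n=e_{n+1}$ on $\bigoplus_{n\in\mathbb{Z}}\mathbb{F}_pe_n$ and $W=\bigoplus_{n\le 0}\mathbb{F}_pe_n$. Then $tW\apprle W$. Yet any $t$-invariant $M$ with $M\cap W$ of finite index in $W$ contains $t^k(M\cap W)$ for every $k$; since $t$ is injective, this forces $|M+W:W|$ to be infinite. So a proof must use the minimality hypothesis, or finite-dimensionality, in an essential way; yours uses minimality only in the final line.

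You also need only \emph{some} infinite definable invariant subgroup of infinite index, not one commensurable with $W$. For instance, $\widetilde{C}_A(\mathfrak{h}/W)=\{a\in A:\ \mathfrak{h}a\apprle W\}$ is an honest $\mathfrak{h}$-submodule (use $h(ga)=g(ha)-[g,h]a$), and it almost contains $W$ by Lemma \ref{symact}. Granting its definability, minimality forces it to have finite index, i.e.\ $\mathfrak{h}\apprle\widetilde{C}_{\mathfrak{g}}(A/W)$. It is this residual situation that an actual argument has to rule out.
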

A consequence of Lemma \ref{LinAlmAbe2} is the following result (\cite[Lemma 5.7]{invitti2025lie}).
\begin{lemma}\label{ObsStrAbsMin}
    Let $(\mathfrak{g},A)$ be a definable finite-dimensional module. Then there exists a strongly absolutely minimal module $(\mathfrak{h},B)$, where $\mathfrak{h}\sqsubseteq \mathfrak{g}$ is a definable Lie subring of finite index and $B\leq A$ is a definable $\mathfrak{h}$-submodule. 
\end{lemma}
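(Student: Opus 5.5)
We choose the pair $(\mathfrak{h},B)$ by minimising dimension and then apply Lemma \ref{LinAlmAbe2}. If $A$ is finite the statement is degenerate: we take $\mathfrak{h}=\mathfrak{g}$ and $B=A$, and there is nothing to check since $A$ has no infinite subgroups. So assume $A$ is infinite, and consider the family $\mathcal{F}$ of all pairs $(\mathfrak{h},B)$ such that
\begin{itemize}
\item $\mathfrak{h}\sqsubseteq\mathfrak{g}$ is a definable Lie subring of finite index;
\item $B\leq A$ is an infinite definable $\mathfrak{h}$-submodule.
\end{itemize}
The family $\mathcal{F}$ is non-empty, since $(\mathfrak{g},A)\in\mathcal{F}$. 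Dimensions take values in $\omega$, so we may pick $(\mathfrak{h},B)\in\mathcal{F}$ with $\dim(B)$ minimal. Then $\dim(B)\geq 1$, because $B$ is infinite.

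The key observation is that a definable subgroup of infinite index has strictly smaller dimension. Indeed, if $B'\leq B$ is definable, apply the fibration properties to the quotient map $B\to B/B'$: every fibre is a coset of $B'$, so $\dim(B)=\dim(B')+\dim(B/B')$. If $|B:B'|$ is infinite, the Algebraicity axiom gives $\dim(B/B')\geq 1$, hence $\dim(B')<\dim(B)$.

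Now let $\mathfrak{h}'\sqsubseteq\mathfrak{h}$ be any definable Lie subring of finite index, and let $B'\leq B$ be an infinite definable $\mathfrak{h}'$-invariant subgroup. Since $\mathfrak{h}'$ has finite index in $\mathfrak{h}$ and $\mathfrak{h}$ has finite index in $\mathfrak{g}$, the subring $\mathfrak{h}'$ has finite index in $\mathfrak{g}$. Hence $(\mathfrak{h}',B')\in\mathcal{F}$, and minimality gives $\dim(B')\geq\dim(B)$. By the observation above, $B'$ therefore has finite index in $B$. This says exactly that $B$ is $\mathfrak{h}'$-minimal for every definable Lie subring $\mathfrak{h}'$ of finite index in $\mathfrak{h}$.

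These are precisely the hypotheses of Lemma \ref{LinAlmAbe2} for the module $(\mathfrak{h},B)$, so $B$ is strongly absolutely $\mathfrak{h}$-minimal, as required. The main point of care is the order of choices: we minimise $\dim(B)$ over all finite-index subrings simultaneously, rather than first fixing $\mathfrak{h}$. This is what makes minimality persist under passing to further finite-index subrings. The only other technical input is the dimension drop for subgroups of infinite index, which follows directly from the fibration axioms.
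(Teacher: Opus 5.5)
Your proof is correct and takes essentially the paper's route: the paper gives no in-text argument, only citing \cite[Lemma 5.7]{invitti2025lie} as a consequence of Lemma \ref{LinAlmAbe2}, and your choice of an infinite definable submodule of minimal dimension over all finite-index definable subrings at once makes $B$ $\mathfrak{h}'$-minimal for every finite-index $\mathfrak{h}'\sqsubseteq\mathfrak{h}$, which is exactly the hypothesis of Lemma \ref{LinAlmAbe2}. Your other steps are also sound: the dimension drop for infinite-index definable subgroups via the fibration axioms, and the observation that the finite case is vacuous, so only infinite $A$ (yielding infinite $B$) carries content.
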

Another important result concerning finite-dimensional modules is the following (\cite[Lemma 4.17]{invitti2025lie}).
\begin{lemma}\label{fin[]}
    Let $(\mathfrak{g},V)$ be a definable module in a finite-dimensional theory. Then $[\widetilde{C}_{\mathfrak{g}}(V),\widetilde{C}_{V}(\mathfrak{g})]$ is finite.
\end{lemma}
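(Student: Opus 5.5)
The plan is to derive the statement directly from \cite[Lemma 4.17]{invitti2025lie} by a short chain of reductions, rather than reproving it from scratch. The cited lemma is set in the framework of hereditarily $\widetilde{\mathfrak{M}}_c$-modules. So the first task is to check that a definable finite-dimensional module $(\mathfrak{g},V)$ falls into this framework. By Theorem \ref{ThmB}, applied to the uniformly definable family of centralisers $C_{\mathfrak{g}}(v/N)$ for $v\in V$ and a fixed definable $N\leq V$, there are bounds $n,d$ such that no chain
$$C_{\mathfrak{g}}(v_1/N)\geq C_{\mathfrak{g}}(v_1,v_2/N)\geq\ldots$$
with all indices at least $d$ can have length exceeding $n$. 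Symmetrically, the same holds for the family $C_V(g/N)$ for $g\in\mathfrak{g}$. This is exactly the hereditary $\widetilde{\mathfrak{M}}_c$ condition for the action. As the paper remarks after the definition, Section 4 of \cite{invitti2025lie} then applies.

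Second, I will recall the ingredients used in the proof of the cited lemma, in case the statement there is phrased only for a module in that abstract framework and needs instantiation. Set $\mathfrak{c}=\widetilde{C}_{\mathfrak{g}}(V)$ and $W=\widetilde{C}_V(\mathfrak{g})$. By Lemma \ref{DefZ}, both are definable, $\mathfrak{c}$ is an ideal and $W$ is a submodule. For each $c\in\mathfrak{c}$ the image $cV$ is finite. For each $w\in W$ the image $\mathfrak{g}w$ is finite, and hence so is $\mathfrak{c}w$.

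The standard argument then runs in four steps:
\begin{enumerate}
\item Using the chain condition, find finitely many $w_1,\dots,w_k\in W$ such that $D:=C_{\mathfrak{c}}(w_1,\dots,w_k)$ has finite index in $C_{\mathfrak{c}}(w/\ldots)$ uniformly. This gives a definable subgroup $D\leq\mathfrak{c}$ of finite index such that $D\cdot w$ is finite and uniformly bounded for all $w\in W$.
\item Symmetrically, find $c_1,\dots,c_l\in\mathfrak{c}$ such that $E:=C_W(c_1,\dots,c_l)$ has finite index in $W$.
\item Show that $[D,E]$ is contained in a finite set. For each $d\in D$, the map $W\to V$, $w\mapsto dw$, is a homomorphism with finite image. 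On the finite-index subgroup $E$, the values $dE$ lie in a bounded finite set, and a Neumann-type counting argument bounds $|\bigcup_{d\in D}dE|$.
\item Since $[\mathfrak{c},W]$ is generated by finitely many cosets' worth of contributions from $D\times E$ (as $\mathfrak{c}/D$ and $W/E$ are finite and each $cW$, $\mathfrak{c}w$ is finite), conclude that $[\mathfrak{c},W]$ is finite.
\end{enumerate}

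The main obstacle is the third step: obtaining a uniform bound on the sizes of $cW$ as $c$ ranges over a finite-index subgroup. Without uniformity, a union of finite sets could be infinite. I would first try to extract this uniformity from Theorem \ref{ThmB} applied to the uniformly definable family $\{C_W(c)\}_{c\in\mathfrak{c}}$: a chain of intersections with large indices has bounded length, so some finite intersection is commensurable, with a bounded index, to every further intersection. Since the statement is itself \cite[Lemma 4.17]{invitti2025lie}, I would in the end simply invoke it after the verification in the first paragraph.
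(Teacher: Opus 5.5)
Your proposal matches the paper: the paper gives no independent proof of this lemma and simply cites \cite[Lemma 4.17]{invitti2025lie}, justifying applicability by the remark that Theorem \ref{ThmB} makes Section 4 of that paper available for finite-dimensional modules, which is exactly your first-paragraph verification. Your optional self-contained sketch is also sound in outline, though not needed: a uniform bound on $|cE|$ for $c\in\widetilde{C}_{\mathfrak{g}}(V)$, obtained from Theorem \ref{ThmB}, is enough for the Neumann-type step, and the generated subgroup is finite because each $cW$ is a finite subgroup, so all generators are torsion.
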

The action of a nilpotent Lie ring of finite dimension is characterised by the following theorem (\cite[Theorem 6.9]{invitti2025lie}).
\begin{theorem}\label{LinAlmNil}
    Let $\mathfrak{g}$ be a definable nilpotent Lie ring acting on a definable module $V$, both definable in a finite-dimensional theory. Assume that the action is not almost trivial and that $V$ is strongly absolutely $\mathfrak{g}$-minimal. Then there exist:
    \begin{itemize}
        \item A definable Lie subring $\mathfrak{h}\sqsubseteq \mathfrak{g}$ of finite index in $\mathfrak{g}$;
        \item A definable $\mathfrak{h}$-submodule $V_1\leq V$ of finite index in $V$;
        \item A finite $\mathfrak{h}$-submodule $V_2\leq V_1$
    \end{itemize}
    such that $\mathfrak{h}/\widetilde{C}_{\mathfrak{h}}(V)$ is abelian and acts $K$-scalarly on the $\mathfrak{h}$-module $V_1/V_2\simeq K^+$, where $K$ is a definable field of finite dimension.
\end{theorem}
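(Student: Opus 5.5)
The approach is a Lie-ring analogue of Zilber's field theorem. We build a definable field $K$ out of the "almost central" part of the action, identify a section of $V$ with $K^+$, and then use Lemma \ref{DerTri} to force the remaining elements of $\mathfrak{h}$ to act by scalars.

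\textbf{Step 1: reduction.} Throughout, we may pass to definable Lie subrings of finite index and to sections $V_1/V_2$ with $V_1$ of finite index and $V_2$ finite, since these changes preserve strong absolute minimality. We work modulo the definable ideal $\widetilde{C}_{\mathfrak{g}}(V)$ (Lemma \ref{DefZ}). By hypothesis the action is not almost trivial, so $\mathfrak{g}\neq\widetilde{C}_{\mathfrak{g}}(V)$.

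\textbf{Step 2: a commuting, non-almost-trivial family.} Since $\mathfrak{g}$ is nilpotent, some $Z_i(\mathfrak{g})$ is not contained in $\widetilde{C}_{\mathfrak{g}}(V)$. Take $i$ minimal and set $\mathfrak{z}:=Z_i(\mathfrak{g})$. Then $[\mathfrak{g},\mathfrak{z}]\leq Z_{i-1}(\mathfrak{g})\leq\widetilde{C}_{\mathfrak{g}}(V)$, so each $z\in\mathfrak{z}$ commutes with every $g\in\mathfrak{g}$ up to an almost trivial endomorphism of $V$.

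For $z\in\mathfrak{z}$, the subgroups $C_V(z)$ and $zV$ are therefore almost stabilised by $\mathfrak{g}$, so $\widetilde{Stab}_{\mathfrak{g}}(C_V(z))\sim\mathfrak{g}$ and likewise for $zV$. By strong absolute minimality each is finite or of finite index. Hence every $z\in\mathfrak{z}\setminus\widetilde{C}_{\mathfrak{g}}(V)$ acts as a quasi-automorphism: finite kernel and image of finite index.

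To turn "almost" into genuine statements, we apply Theorem \ref{ThmB} (through the $\widetilde{\mathfrak{M}}_c$-property) and Lemma \ref{fin[]}. These give a single definable finite-index subring $\mathfrak{h}$, a finite-index $V_1$ and a finite $V_2$ such that, on $V_1/V_2$, the elements of $\mathfrak{z}\cap\mathfrak{h}$ act as genuine automorphisms or as $0$ and commute exactly with $\mathfrak{h}$.

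\textbf{Step 3: the field.} Let $R$ be the subring of $\operatorname{End}(V_1/V_2)$ generated by the image of $\mathfrak{z}\cap\mathfrak{h}$. It is commutative, and by Step 2 its non-zero elements are invertible. So $R$ is a commutative domain of quasi-automorphisms. Following Zilber's argument, interpreted in finite dimension via the fibration properties and the UCC (Lemma \ref{ucc}), the fraction field $K$ of $R$ is definable and of finite dimension, and $V_1/V_2$ is a $K$-vector space.

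Every $h\in\mathfrak{h}$ commutes with $R$, hence with $K$, so it is $K$-linear. A $K$-subspace $W\neq 0$ is then invariant under $\mathfrak{h}$ exactly when it is invariant under the subring $R[\mathfrak{h}]$. Strong absolute minimality then forces $V_1/V_2$ to be one-dimensional over $K$, provided we know that $\mathfrak{h}$ acts by elements commuting with $K$ that are already scalars. Therefore we must show that $\mathfrak{h}$ acts $K$-scalarly.

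\textbf{Step 4: scalarity and abelianity (main obstacle).} We must show that each $h\in\mathfrak{h}$ acts by multiplication by an element of $K$. Suppose instead that $\mathfrak{h}/\widetilde{C}_{\mathfrak{h}}(V)$ is not abelian, or that $V_1/V_2$ has $K$-dimension greater than one.

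The first attempt is to take $a,b\in\mathfrak{h}$ with $[a,b]$ lying one step lower in the central series. Its image $c=[a,b]$ is then scalar, by induction on the nilpotency class applied to the previous step. If $c\neq0$, then $\operatorname{ad}_a$ restricted to the definable $K$-algebra generated by $b$ behaves like a derivation. Using $[a,c]=0$, this yields a non-trivial definable derivation of a finite-dimensional field extension of $K$, which contradicts Lemma \ref{DerTri}. Hence all brackets act almost trivially and $\mathfrak{h}/\widetilde{C}_{\mathfrak{h}}(V)$ is abelian.

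The algebra $K[\mathfrak{h}]$ is then a commutative subring of $\operatorname{End}_K(V_1/V_2)$. By Schur-type minimality (any proper non-zero kernel of an element of $K[\mathfrak{h}]$ would be an almost-invariant subgroup), it is a domain, hence a finite-dimensional definable field $L\supseteq K$. Replacing $K$ by $L$ makes $V_1/V_2$ one-dimensional, so $V_1/V_2\simeq K^+$ and $\mathfrak{h}$ acts $K$-scalarly.

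The delicate point I expect is making the derivation argument genuinely definable, and controlling the finite error terms uniformly when passing to $V_1/V_2$.
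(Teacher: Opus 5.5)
The paper contains no proof of this statement to compare with: it is imported verbatim as Theorem 6.9 of \cite{invitti2025lie}. Judged on its own, your argument has a genuine gap at Step 4. The gap is algebraic, not a matter of definability.

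You assert that the $K$-algebra generated by $b$ is a field, so that $c^{-1}\operatorname{ad}_a$ is a non-trivial definable derivation of a finite-dimensional field and Lemma \ref{DerTri} applies. Nothing makes $K[b]$ a domain. If $c=[a,b]$ is a non-zero scalar, then $[a,b^n]=ncb^{n-1}$, so for the minimal polynomial $m$ of $b$ over $K$ you get $m'(b)=0$. In characteristic $0$ this is already a contradiction, and it is this minimal-polynomial argument, not Lemma \ref{DerTri}, that works there. In characteristic $p$ it only says that $m$ is a polynomial in $t^p$. Since $K$ is perfect (Lemma \ref{perfection}), $m$ is then a $p$-th power, so $K[b]$ has nilpotents. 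For example $K[b]\cong K[s]/(s^p)$, which carries the non-zero derivation $d/ds$.

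This cannot be patched. Let $K$ be algebraically closed of characteristic $2$ and let $\mathfrak{g}=\mathfrak{sl}_2(K)$ act on $V=K^2$.
\begin{itemize}
\item $[e,f]=h=\mathrm{Id}$, so $\mathfrak{g}$ is nilpotent of class $2$.
\item As $K\cdot\mathrm{Id}\subseteq\mathfrak{g}$, invariant subgroups are $K$-subspaces, and $V$ is irreducible.
\item Since $\mathfrak{g}$ and $V$ are connected, $\widetilde{Stab}_{\mathfrak{g}}(W)\sim\mathfrak{g}$ forces $W^\circ$ to be $\mathfrak{g}$-invariant, so $V$ is strongly absolutely minimal.
\item $\widetilde{C}_{\mathfrak{g}}(V)=0$, and the only admissible choices are $\mathfrak{h}=\mathfrak{g}$, $V_1=V$, $V_2=0$.
\end{itemize}
Yet $\mathfrak{g}$ is not abelian and does not act scalarly. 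The Heisenberg algebra acting on $K[x]/(x^p)$ via $d/dx$, $x$ and $1$ gives the same behaviour in every characteristic $p$. So the abelianity in the conclusion cannot come out of your Step 4. Some hypothesis such as characteristic $0$ has to enter exactly there, and the statement as quoted here appears to need it.

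Separately, Steps 2 and 3 assert the hard uniformity rather than prove it.
\begin{itemize}
\item You need one $\mathfrak{h}$, one $V_1$ and one finite $V_2$ on which every element of $\mathfrak{z}\cap\mathfrak{h}$ acts as a genuine automorphism or as $0$. That requires absorbing all the finite kernels $C_V(z)$ into a single finite $\mathfrak{h}$-submodule, and Theorem \ref{ThmB} and Lemma \ref{fin[]} do not give this directly.
\item Invertibility of the generators of $R$ says nothing about sums of their products. A Schur-type argument only yields finite kernels and images of finite index, not invertibility.
\item Zilber's proof that the generated ring is definable uses connectedness and indecomposability. You would need finite-dimensional substitutes for both before the fraction field $K$ can be called definable.
\end{itemize}
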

Finally, we introduce \emph{almost Cartan Lie subrings}.
\begin{definition}\label{Def:AlmCarLieRin}
    Let $\mathfrak{g}$ be a Lie ring, and let $\mathfrak{h}$ be a Lie subring. Then $\mathfrak{h}$ is an \emph{almost Cartan Lie subring} if $\mathfrak{h}$ is virtually nilpotent and \emph{almost self-normalising}, i.e., $\mathfrak{h}$ has finite index in $\widetilde{N}_{\mathfrak{g}}(\mathfrak{h})$.
\end{definition}
\section{Almost cohomology of Lie rings}\label{AlmCohLieRin}
In this section, we define the \emph{almost cohomology groups} of a Lie ring, and we establish some basic properties.\\
We begin by recalling the usual definition of cohomology groups for a Lie ring \cite{palais1961cohomology}. This introduction is crucial since it represents the basis for our new notion and clarifies the limits of the classical approach in the finite-dimensional setting. The first step in the construction of cohomology groups for Lie rings (or an algebraic structure in general) is the introduction of a \emph{chain complex}: a sequence of groups $(C^i(\mathfrak{g},A))_{i<\omega}$ and of homomorphisms $d_i:C^i(\mathfrak{g},A)\to C^{i+1}(\mathfrak{g},A)$, called the \emph{differentials}, such that $d^{i+1}\circ d^i=0$.  
    \begin{definition}
        Let $\mathfrak{g}$ be a Lie ring and $(\mathfrak{g},A)$ a $\mathfrak{g}$-module. The groups $C^i(\mathfrak{g},A)$ are defined as follows:
        \begin{itemize}
            \item $C^0(\mathfrak{g},A)=A$;
            \item $C^{i}(\mathfrak{g},A)=\{f:\mathfrak{g}^i\to A: f\text{ is an alternating homomorphism}\}$. A homomorphism $f:\mathfrak{g}^i\to A$ is \emph{alternating} if $f(\overline{g})=0$ for every $\overline{g}\in \mathfrak{g}^i$ such that $\overline{g}_k=\overline{g}_j$ for some $j\neq k\leq n$. Here $\overline{g}_j$ denote the $j$-th element of the tuple $\overline{g}$. 
        \end{itemize}
        Define the \emph{differential} $d^i:C^i(\mathfrak{g},A)\to C^{i+1}(\mathfrak{g},A)$ as the map that sends $f\in C^i(\mathfrak{g},A)$ to the homomorphism $d^i(f)$ such that $d^i(f)(g_1,\ldots, g_{i+1})$ equals 
        $$\sum_{1\leq s<t\leq i+1} (-1)^{s+t-1}f([g_s,g_t],g_1,\ldots,\hat{g_s}\ldots \hat{g_t},\ldots,g_{i+1})+\sum_{1\leq j\leq i+1}(-1)^j g_jf(g_1,\ldots, \hat{g_j}\ldots g_{i+1}).$$
    \end{definition}
    The $i$th cohomology group of the module $(\mathfrak{g},A)$ is the group $H^i(\mathfrak{g},A):=\ker(d^i)/\mathrm{im}(d^{i-1})$ for each $i\in \mathbb{N}$. An important property of the $0$th cohomology group is that $H^0(\mathfrak{g},A)=C_{A}(\mathfrak{g})$. In finite-dimensional Lie rings, the central notion is represented by the almost centraliser of the action rather than the centraliser. Hence, it is crucial to introduce a cohomology theory in which the $0$th cohomology group coincides with $\widetilde{C}_A(\mathfrak{g})$.\\
    We now define the groups $\widetilde{C}^0(\mathfrak{g},A)$ and $\widetilde{C}^1(\mathfrak{g},A)$. Before doing so, we need to introduce the set of \emph{partial homomorphisms} from a group $G$ to another group $H$.
    \begin{definition}
        Let $G,H$ be two abelian groups. A \emph{partial homomorphism} is a homomorphism $f:G_1\to H$, where $G_1$ is a subgroup of finite index in $G$. The subgroup $G_1$ is called the \emph{domain} of $f$, and it is denoted by $\Dom(f)$. The set of partial homomorphisms from $G$ to $H$ is denoted by $\mathcal{P}\text{-}\operatorname{Hom}(G,H)$.
    \end{definition}
    If the groups $G$ and $H$ are abelian, we can equip $\mathcal{P}\text{-}\operatorname{Hom}(G,H)$ with an operation $+$.
    \begin{definition}
        Let $G,H$ be two abelian groups, and let $f,f_1\in \mathcal{P}\text{-}\operatorname{Hom}(G,H)$. Then we define the \emph{sum}
        $$f+f_1:\Dom(f)\cap \Dom(f_1)\to H$$
        as the partial homomorphism sending $g\in \Dom(f)\cap \Dom(f_1)$ to $(f+f_1)(g)=f(g)+f_1(g)$. The trivial homomorphism from $G$ to $H$ is the identity element for this operation. We define $-f:\Dom(f)\to H$ as the partial homomorphism sending $g\in \Dom(f)$ to $(-f)(g)=-(f(g))$.
    \end{definition}
    The structure $(\mathcal{P}\text{-}\operatorname{Hom}(G,H),+,0)$ is a monoid but not a group in general. Indeed $f+(-f)\neq 0$ since, if $\Dom(f)\neq G$, the domains of $0$ and $f+(-f)$ are different. We may introduce an equivalence relation $\sim$ such that $(\mathcal{P}\text{-}\operatorname{Hom}(G,H)/\sim,+,0)$ is a group.
    \begin{definition}
        Let $G,H$ be two abelian groups. We define the equivalence relation $\sim$ on $\mathcal{P}\text{-}\operatorname{Hom}(G,H)$ as follows: for every $f,f_1\in \mathcal{P}\text{-}\operatorname{Hom}(G,H)$, we have that $f\sim f_1$ if and only if $(f-f_1)(\Dom(f-f_1))$ is finite.
    \end{definition}
 A standard calculation shows that $(\mathcal{P}\text{-}\operatorname{Hom}(G,H)/\sim,+,0)$ is a group.
 \begin{lemma}\label{PHomGro}
     Let $G,H$ be two abelian groups. The structure $(\mathcal{P}\text{-}\operatorname{Hom}(G,H)/\sim,+,0)$ is a group.
 \end{lemma}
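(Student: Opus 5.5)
We need to check that $\sim$ is an equivalence relation compatible with $+$, and that the quotient has an identity and inverses. Associativity and commutativity then come for free, because both hold pointwise on common domains.

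First, $\sim$ is an equivalence relation.
\begin{itemize}
\item Reflexivity: $f-f$ is the zero map on $\Dom(f)$, so its image is $\{0\}$, which is finite.
\item Symmetry: $f_1-f=-(f-f_1)$ has the same domain as $f-f_1$, and its image is the negative of the image of $f-f_1$, so it is finite whenever that one is.
\item Transitivity: suppose $f\sim f_1$ and $f_1\sim f_2$. On $D=\Dom(f)\cap\Dom(f_1)\cap\Dom(f_2)$ we have $f-f_2=(f-f_1)+(f_1-f_2)$. Hence $(f-f_2)(D)$ lies in the sum of two finite sets and is finite. Now $D$ has finite index in $E=\Dom(f)\cap\Dom(f_2)$, since finite intersections of finite-index subgroups have finite index. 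So $(f-f_2)(E)$ is a union of finitely many translates $(f-f_2)(e_i)+(f-f_2)(D)$, one for each coset representative $e_i$ of $D$ in $E$. This set is finite.
\end{itemize}
This last observation is the one technical point of the whole proof. If a partial homomorphism has finite image on a finite-index subgroup of its domain, then it has finite image on the whole domain.

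Second, $+$ is well defined on classes. Suppose $f\sim f'$ and $h\sim h'$. On the intersection of all four domains, $(f+h)-(f'+h')=(f-f')+(h-h')$, which has finite image. By the observation above, the image is finite on the full domain $\Dom(f)\cap\Dom(h)\cap\Dom(f')\cap\Dom(h')$ of the difference. Hence $f+h\sim f'+h'$.

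Third, the group axioms hold in the quotient.
\begin{itemize}
\item Associativity and commutativity hold already in $\mathcal{P}\text{-}\operatorname{Hom}(G,H)$, since both sides are defined on the same intersection of domains and agree pointwise.
\item Identity: the zero homomorphism on $G$ satisfies $0+f=f$ exactly, because $G\cap\Dom(f)=\Dom(f)$.
\item Inverses: $f+(-f)$ is the zero map on $\Dom(f)$. Its difference with $0$ is defined on $\Dom(f)$ and has image $\{0\}$, so $f+(-f)\sim 0$.
\end{itemize}
Therefore $(\mathcal{P}\text{-}\operatorname{Hom}(G,H)/\sim,+,0)$ is an abelian group.
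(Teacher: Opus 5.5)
Your proof is correct and is exactly the routine verification the paper leaves out with the phrase ``a standard calculation shows''. One small redundancy: in the well-definedness step you do not need the finite-index observation, because $\Dom(f)\cap\Dom(h)\cap\Dom(f')\cap\Dom(h')$ is already the full domain of $(f+h)-(f'+h')$.
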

We now define $\widetilde{C}^0(\mathfrak{g},A)$ and $\widetilde{C}^1(\mathfrak{g},A)$ for a module $(\mathfrak{g},A)$.
    \begin{definition}
    Let $\mathfrak{g}$ be a Lie ring and $A$ a $\mathfrak{g}$-module. We define $\widetilde{C}^0(\mathfrak{g},A)$ and $\widetilde{C}^1(\mathfrak{g},A)$ as follows:
    \begin{itemize}
        \item $\widetilde{C}^0(\mathfrak{g},A)=A$;
        \item $\widetilde{C}^1(\mathfrak{g},A)=\mathcal{P}\text{-}\operatorname{Hom}(\mathfrak{g},A)/\sim$.
    \end{itemize}
    We define the differential $$d:\widetilde{C}^0(\mathfrak{g},A)\to \widetilde{C}^{1}(\mathfrak{g},A)$$
    as the map sending $a$ to $[d(a)]$ where $d(a):\mathfrak{g}\to A$ is the map that sends $g\in \mathfrak{g}$ to $d(a)(g)=g(a)$. 
    \end{definition}
    It follows from Lemma \ref{PHomGro} that $(\widetilde{C}^1(\mathfrak{g},A),+)$ is a group. Note that $\widetilde{C}^1(\mathfrak{g},A)$ is the natural extension of the groups $C^1(\mathfrak{g},A)$ in the finite-dimensional context, up to working with partial homomorphisms. The reason for the introduction of partial endomorphisms will be clarified in Lemma \ref{ExactserC}. The main difficulty in the definition of $\widetilde{C}^2(\mathfrak{g},A)$ lies in the construction of the differential map (see Section \ref{QuestionLieRing}). 
    We associate with $\widetilde{C}^0(\mathfrak{g},A)$ and $\widetilde{C}^1(\mathfrak{g},A)$ the \emph{almost cohomology groups} $\widetilde{H}^{0}(\mathfrak{g},A)=\ker(d)$, called the \emph{$0$th almost cohomology group}, and $\widetilde{H}^1=\operatorname{AlmDer}(\mathfrak{g},A)/\mathrm{im}(d)$, called the \emph{$1$st almost cohomology group}, where $\operatorname{AlmDer}(\mathfrak{g},A)$ is the group of \emph{almost derivations} of the module $(\mathfrak{g},A)$. 
    \begin{definition}\label{DefAlmDer}
    Let $\mathfrak{g}$ be a Lie ring and $A$ a $\mathfrak{g}$-module. Let $f:\Dom(f)\to A$ be an element of $\mathcal{P}\text{-}\operatorname{Hom}(\mathfrak{g},A)$. Define, for $g\in \Dom(f)$, the subgroups
    $$D^f_{g}:=\{g'\in \Dom(f)\cap \ad_g^{-1}(\Dom(f)):\ f([g,g'])=g(f(g'))-g'(f(g))\}$$
    and 
    $$D^f:=\{g\in \Dom(f):\ D^f_g\sim \mathfrak{g}\}.$$
    Then $f$ is said to be an \emph{almost derivation} if $D^f$ has finite index in $\mathfrak{g}$.
    \end{definition}
    We now show that $\operatorname{AlmDer}(\mathfrak{g},A)$ is a subgroup of $\widetilde{C}^1(\mathfrak{g},A)$.
    \begin{lemma}\label{AlmDer}
        Let $\mathfrak{g}$ be a Lie ring and $(\mathfrak{g},A)$ a $\mathfrak{g}$-module. Then, for every $f,h\in \mathcal{P}\text{-}\operatorname{Hom}(\mathfrak{g},A)$ and $g\in \mathfrak{g}$, we have that
        \begin{enumerate}
            \item $D^f_g$ is a subgroup of $\mathfrak{g}$;
            \item $D^f$ is a subgroup of $\mathfrak{g}$;
            \item If $f,h$ are almost derivations, then $f+h$ and $-f$ are almost derivation;
            \item If $f$ is an almost derivation and $h\sim f$, then $h$ is an almost derivation.
        \end{enumerate}
    \end{lemma}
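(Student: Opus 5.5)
The four statements are checked directly from Definition \ref{DefAlmDer}. Parts (1) and (2) rest on the additivity of $f$, of $\ad_g$ and of the action. Part (3) follows from (1) and (2) together with intersections of finite-index subgroups. For (4) we write $h$ as $f$ plus a partial homomorphism with finite image and apply (3). One observation is used throughout: if $D\leq\mathfrak{g}$ has finite index, then so does $\ad_g^{-1}(D)$. Indeed, $\ad_g$ induces an injective homomorphism $\mathfrak{g}/\ad_g^{-1}(D)\to\mathfrak{g}/D$.

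\emph{Part (1).} Fix $g\in\Dom(f)$. The set $E:=\Dom(f)\cap\ad_g^{-1}(\Dom(f))$ is a subgroup, being the intersection of two subgroups. The map
$$g'\mapsto f([g,g'])-g(f(g'))+g'(f(g))$$
is a homomorphism $E\to A$, because $f$ is additive, the bracket is bilinear and the action is biadditive. Hence $D^f_g$ is its kernel and is a subgroup. By the observation above, $E$ has finite index in $\mathfrak{g}$.

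\emph{Part (2).} First, $0\in D^f$, since $D^f_0=\Dom(f)\sim\mathfrak{g}$. Next, let $g_1,g_2\in D^f$ and take $g'\in D^f_{g_1}\cap D^f_{g_2}$. Then $[g_1-g_2,g']=[g_1,g']-[g_2,g']\in\Dom(f)$, and
$$f([g_1-g_2,g'])=f([g_1,g'])-f([g_2,g'])=(g_1-g_2)(f(g'))-g'(f(g_1-g_2)).$$
So $D^f_{g_1}\cap D^f_{g_2}\leq D^f_{g_1-g_2}$. The left-hand side is an intersection of two subgroups commensurable with $\mathfrak{g}$, hence has finite index in $\mathfrak{g}$. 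Therefore $D^f_{g_1-g_2}\sim\mathfrak{g}$, and $g_1-g_2\in D^f$.

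\emph{Part (3).} The same computation gives $D^f_g\cap D^h_g\leq D^{f+h}_g$ for $g\in\Dom(f)\cap\Dom(h)$: membership in both sets already forces $[g,g']\in\Dom(f)\cap\Dom(h)$. Hence $D^f\cap D^h\leq D^{f+h}$, and this has finite index. Likewise $D^{-f}_g=D^f_g$, so $D^{-f}=D^f$.

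\emph{Part (4).} Put $k:=h-f$. This is a partial homomorphism with finite image, so $\ker k$ has finite index in $\Dom(k)$, and hence in $\mathfrak{g}$. I claim $k$ is an almost derivation. Let $g\in\ker k$. For $g'$ in
$$\ker k\cap\ad_g^{-1}(\Dom(k))\cap\{g':\ k([g,g'])=0\},$$
both sides of the defining identity vanish, since $k([g,g'])=0$ and $g(k(g'))-g'(k(g))=0$. This set is the kernel of the finite-image homomorphism $g'\mapsto k([g,g'])$ on $\ker k\cap\ad_g^{-1}(\Dom(k))$. By the observation, that domain has finite index, so the kernel does too. Hence $D^k_g\sim\mathfrak{g}$ for all $g\in\ker k$, which gives $\ker k\leq D^k$, so $k$ is an almost derivation.

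By (3), $f+k$ is then an almost derivation. Now $f+k$ is the restriction of $h$ to $\Dom(f)\cap\Dom(h)$. Enlarging the domain of a partial homomorphism within $\mathfrak{g}$ can only enlarge each $D_g$ and $D$. Therefore $D^h\geq D^{f+k}$ has finite index, and $h$ is an almost derivation.

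The only non-formal point is the finiteness of indices: both of $\ad_g^{-1}(\Dom(\cdot))$ and of the kernel of a finite-image homomorphism. Both are handled by the injection $\mathfrak{g}/\ad_g^{-1}(D)\hookrightarrow\mathfrak{g}/D$.
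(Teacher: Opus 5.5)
Your proof is correct. Parts (1)--(3) follow the paper's argument essentially verbatim: $D^f_g$ is the kernel of an additive map, and you use the inclusions $D^f_{g_1}\cap D^f_{g_2}\le D^f_{g_1-g_2}$ and $D^f_g\cap D^h_g\le D^{f+h}_g$. You also add two things the paper leaves tacit: you check that $0\in D^f$, and you justify that $\ad_g^{-1}$ of a finite-index subgroup has finite index.

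Part (4) is where you take a different route. The paper argues in one step. For $g\in D^f\cap\ker(f-h)$ and $g'\in D^f_g\cap\ker(f-h)\cap\ad_g^{-1}(\ker(f-h))$, the maps $f$ and $h$ agree at $g$, $g'$ and $[g,g']$. So the defining identity transfers directly from $f$ to $h$, which gives $D^f\cap\ker(f-h)\le D^h$.

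You instead factor the argument through three separate facts:
\begin{enumerate}
\item every partial homomorphism $k$ with finite image is an almost derivation, since $\ker k\le D^k$;
\item sums of almost derivations are almost derivations, which is your part (3);
\item $D^u$ can only grow when the domain of $u$ is enlarged.
\end{enumerate}

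The paper's route is shorter and does not need to pass through (3). Yours is more modular and isolates facts that are useful on their own. In particular, facts 1 and 2 together also give closure under restriction to a finite-index subgroup, since $u|_{G_1}=u+0|_{G_1}$. This makes explicit why membership in $\operatorname{AlmDer}(\mathfrak{g},A)$ depends only on the $\sim$-class.
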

    \begin{proof}
        (1) Let $g_1,g_2\in D^f_g$. Then, by the linearity of $f$ and of the Lie bracket,
        \begin{align*}
            f([g,g_1+g_2])&=f([g,g_1])+f([g,g_2])=gf(g_1)-g_1f(g)+gf(g_2)-g_2f(g)\\
            &=gf(g_1+g_2)-(g_1+g_2)f(g).
        \end{align*}
        The closure under the additive inverse follows similarly.\\
        (2) Let $g,g'\in \mathfrak{g}$ be such that $D^f_g\sim D_{g'}^f\sim \mathfrak{g}$. Then $D^f_g\cap D^f_{g'}\leq \mathfrak{g}$ is a subgroup finite index. Moreover, for every $g_1\in D^f_g\cap D^f_{g'}$, we have that $[g+g',g_1]=[g,g_1]+[g',g_1]\in \Dom(f)$ since $\Dom(f)\leq \mathfrak{g}$. Additionally, by the linearity of $f$ and of the Lie bracket,
        \begin{align*}
            f([g+g',g_1])&=f([g,g_1])+f([g',g_1])=gf(g_1)-g_1f(g)+g'f(g_1)-g_1f(g')\\
            &=(g+g')f(g_1)-g_1f(g+g').
        \end{align*}
        Hence $D^f_{g+g'}\geq D^f_g\cap D^f_{g'}$. Since the latter has finite index in $\mathfrak{g}$, we conclude that $g+g'\in D^f$. The closure under the additive inverse follows similarly.\\
        (3) Let $f,h\in \operatorname{AlmDer}(\mathfrak{g},A)$. Then $D^f\cap D^h$ has finite index in $\mathfrak{g}$. If we verify that $g\in D^{f+h}$ for every $g\in D^f\cap D^h$, then the claim follows. Fix $g\in D^f\cap D^h$, then $D^f_g\cap D^h_g$ has finite index in $\mathfrak{g}$. In addition, for every $g_1\in D^f_g\cap D^h_g$, the element $(f+h)([g,g_1])$ is defined since $\Dom(f+h)=\Dom(f)\cap \Dom(h)$. The same holds for $(f+h)(g)$ and $(f+h)(g_1)$. Moreover, 
        \begin{align*}
            (f+h)([g,g_1])&=f([g,g_1])+h([g,g_1])\\
            &=gf(g_1)-g_1f(g)+gh(g_1)-g_1h(g)=g(f+h)(g_1)-g_1(f+h)(g).
        \end{align*}
        The proof for the additive inverse follows similarly.\\
        (4) Let $f$ be an almost derivation, and let $h\in \mathcal{P}\text{-}\operatorname{Hom}(\mathfrak{g},A)$ be such that $(f-h)(\Dom(f)\cap \Dom(h))$ is finite. Since they are both partial homomorphisms, the subgroup $\ker(f-h)$ has finite index in $\Dom(f)\cap \Dom(h)$ and so in $\mathfrak{g}$. If we verify that, for every $g\in D^f\cap \ker(f-h)$, the element $g$ belongs to $D^h$, the claim follows. For every $g_1\in D^f_g\cap \ker(f-h)\cap \ad_g^{-1}(\ker(f-h))$, we have that $h(g_1)$ and $h([g,g_1])$ are well-defined. Moreover, 
        $$h([g,g_1])=f([g,g_1])=gf(g_1)-g_1f(g)=gh(g_1)-g_1h(g).$$
        Since $D^f_g\cap \ker(f-h)\cap \ad_g^{-1}(\ker(f-h))$ has finite index in $\mathfrak{g}$, we conclude that $g\in D^h$ and the proof is completed.
    \end{proof}
    We also define the subgroup of \emph{inner derivation} of the module $(\mathfrak{g},A)$.
    \begin{definition}
    Let $(\mathfrak{g},A)$ be a module. Then $f\in \mathcal{P}\text{-}\operatorname{Hom}(\mathfrak{g},A)$ is an \emph{inner derivation} if $f=d(a)$ for some $a\in A$.
    \end{definition}
        
    We now characterise the $0$th and $1$st almost cohomology groups.
    \begin{lemma}
    Let $(\mathfrak{g},A)$ be a $\mathfrak{g}$-module. Then
    \begin{enumerate}
        \item $\widetilde{H}^0(\mathfrak{g},A)=\widetilde{C}_A(\mathfrak{g})$;
        \item $\widetilde{H}^1(\mathfrak{g},A)=\operatorname{AlmDer}(\mathfrak{g},A)/[\operatorname{InnDer}(\mathfrak{g},A)]$.
    \end{enumerate}
    \end{lemma}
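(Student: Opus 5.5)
Both parts come straight from the definitions of $d$, of $\sim$ on $\mathcal{P}\text{-}\operatorname{Hom}(\mathfrak{g},A)$, and of $\operatorname{AlmDer}(\mathfrak{g},A)$.

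For (1), I will identify the kernel of $d:\widetilde{C}^0(\mathfrak{g},A)\to\widetilde{C}^1(\mathfrak{g},A)$. An element $a\in A$ lies in $\ker(d)$ exactly when $[d(a)]=[0]$, that is, when $d(a)\sim 0$. Since $d(a)$ is a total homomorphism $\mathfrak{g}\to A$, the domain of $d(a)-0$ is all of $\mathfrak{g}$. So $d(a)\sim 0$ means precisely that the image $d(a)(\mathfrak{g})=\mathfrak{g}\cdot a$ is finite. The definition of the almost centraliser gives $a\in\widetilde{C}_A(\mathfrak{g})$ if and only if $\mathfrak{g}\cdot a\apprle\{0\}$. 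Because $\mathfrak{g}\cdot a$ is a subgroup (it is the image of the homomorphism $g\mapsto g(a)$), this says $\mathfrak{g}\cdot a\cap\{0\}$ has finite index in $\mathfrak{g}\cdot a$, i.e.\ $\mathfrak{g}\cdot a$ is finite. The two conditions coincide, so $\ker(d)=\widetilde{C}_A(\mathfrak{g})$. I will also note that $d$ is a homomorphism, since $g(a+b)=g(a)+g(b)$; hence the kernel is indeed a group.

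For (2), the point is that $\mathrm{im}(d)$ is by definition $\{[d(a)]:a\in A\}$, which is exactly the set of classes of inner derivations, written $[\operatorname{InnDer}(\mathfrak{g},A)]$. Two things must be checked so that the quotient makes sense inside $\operatorname{AlmDer}(\mathfrak{g},A)$.
\begin{itemize}
\item First, $\operatorname{AlmDer}(\mathfrak{g},A)$ is well defined on classes and is a subgroup of $\widetilde{C}^1(\mathfrak{g},A)$. Lemma \ref{AlmDer}(4) shows that being an almost derivation is $\sim$-invariant, and Lemma \ref{AlmDer}(3) gives closure under sums and inverses.
\item Second, each $d(a)$ is an almost derivation. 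Its domain is $\mathfrak{g}$, and for all $g,g'$ the module axiom gives
$$d(a)([g,g'])=[g,g'](a)=g(g'(a))-g'(g(a))=g\,d(a)(g')-g'\,d(a)(g).$$
Hence $D^{d(a)}_g=\mathfrak{g}$ for every $g$, so $D^{d(a)}=\mathfrak{g}$.
\end{itemize}
Therefore $[\operatorname{InnDer}(\mathfrak{g},A)]=\mathrm{im}(d)$ is a subgroup of $\operatorname{AlmDer}(\mathfrak{g},A)$, and $\widetilde{H}^1(\mathfrak{g},A)$ is the stated quotient.

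There is no real obstacle here. The only delicate point is the convention in (1) that $\mathfrak{g}\cdot a\apprle\{0\}$ means $\mathfrak{g}\cdot a$ is finite, which relies on $\mathfrak{g}\cdot a$ being a subgroup. I will also make sure in (2) that the inner derivations are total maps, so no domain issues arise in the computation of $D^{d(a)}_g$.
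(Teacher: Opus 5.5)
Your proposal is correct and follows essentially the paper's argument: for (1) you read $d(a)\sim 0$ as finiteness of the image $\mathfrak{g}\cdot a$, where the paper uses the equivalent condition that $C_{\mathfrak{g}}(a)=\ker d(a)$ has finite index, and for (2) you identify $\mathrm{im}(d)$ with $[\operatorname{InnDer}(\mathfrak{g},A)]$ exactly as the paper does. Your additional checks, that each $d(a)$ is an almost derivation and that $\operatorname{AlmDer}(\mathfrak{g},A)$ is a well-defined subgroup of $\widetilde{C}^1(\mathfrak{g},A)$ via Lemma \ref{AlmDer}, make explicit what the paper leaves implicit.
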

    \begin{proof}
        (1) Note that $\widetilde{H}^0(\mathfrak{g},A)=\ker(d)$, and $a\in \ker(d_0)$ if and only if the map $d(a):\mathfrak{g}\to A$ that sends $g\in \mathfrak{g}$ to $g(a)\in A$ is equivalent to the zero map. Therefore $a\in \ker(d)$ if and only if $\{g\in \mathfrak{g}:\ g(a)=0\}=C_{\mathfrak{g}}(a)\sqsubseteq \mathfrak{g}$ has finite index in $\mathfrak{g}$, and so $a\in \widetilde{C}_A(\mathfrak{g})$.\\
        (2) The image of $d$ consists of the $f\in \widetilde{C}^1(\mathfrak{g},A)$ such that $f-d(a)$ is finite for some $a\in A$. This is equivalent to saying that $f$ is equivalent to the inner derivation $d(a)$.
    \end{proof}
    Both $\widetilde{H}^0(\mathfrak{g},A)$ and $\widetilde{H}^1(\mathfrak{g},A)$ can be equipped with the structure of a $\mathfrak{g}$-module.
    \begin{definition}
         We define the maps
    $$\cdot:\mathfrak{g}\times \widetilde{C}^0(\mathfrak{g},A)\to \widetilde{C}^0(\mathfrak{g},A)$$
    that sends the pair $(g,a)$ to $ga$, and
    $$\cdot:\mathfrak{g}\times \widetilde{C}^1(\mathfrak{g},A)\to \widetilde{C}^1(\mathfrak{g},A)$$
    that sends the pair $(g,[f])$ to $[g\cdot f]$, where 
    $$(g\cdot f):\Dom(f)\cap \ad_g^{-1}(\Dom(f))\to A$$
    is the map sending $g'\in \Dom(f)\cap \ad_g^{-1}(\Dom(f))$ to $(g\cdot f)(g')=g(f(g'))-f([g,g'])$.
    \end{definition}
    Note that the map $\cdot:\mathfrak{g}\times \widetilde{C}^1(\mathfrak{g},A)\to \widetilde{C}^1(\mathfrak{g},A)$ is well-defined because if $f\sim 0$, then $\mathrm{im}(g\cdot f)\leq \mathrm{im}(g\cdot f)+\mathrm{im}(f)$, which is finite since it is a finite sum of finite subgroups.
    We now show that the actions of $\mathfrak{g}$ on $\widetilde{C}^0(\mathfrak{g},A)$ (respectively $\widetilde{C}^1(\mathfrak{g},A)$) induces an action on $\widetilde{H}^0(\mathfrak{g},A)$ (respectively $\widetilde{H}^1(\mathfrak{g},A)$). Moreover, we establish that the action of $\mathfrak{g}$ on $\widetilde{H}^1(\mathfrak{g},A)$ is almost trivial.
    \begin{lemma}\label{actalmcoh}
    Let $\mathfrak{g}$ be a Lie ring and $(\mathfrak{g},A)$ a $\mathfrak{g}$-module.
    \begin{enumerate}
        \item The action of $\mathfrak{g}$ on $\widetilde{C}^0(\mathfrak{g},A)$ induces an action of $\mathfrak{g}$ on $\widetilde{H}^0(\mathfrak{g},A)$;
        \item The action of $\mathfrak{g}$ on $\widetilde{C}^1(\mathfrak{g},A)$ induces an action of $\mathfrak{g}$ on $\widetilde{H}^1(\mathfrak{g},A)$. Furthermore $\widetilde{C}_{\widetilde{H}^1(\mathfrak{g})}(\mathfrak{g})=\widetilde{H}^1(\mathfrak{g})$.
    \end{enumerate}
    \end{lemma}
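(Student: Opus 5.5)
We need to show two things: that $\widetilde{H}^0$ and $\widetilde{H}^1$ are $\mathfrak{g}$-invariant under the given actions, and that every class in $\widetilde{H}^1(\mathfrak{g},A)$ is almost centralised by $\mathfrak{g}$. I would handle them in that order.

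For (1), I will use Lemma \ref{DefZ} with $\mathfrak{h}=\mathfrak{g}$. It says $\widetilde{C}_A(\mathfrak{g})$ is a $\mathfrak{g}$-submodule of $A$. Directly: if $a\in \widetilde{C}_A(\mathfrak{g})$ and $g\in\mathfrak{g}$, then for $g'\in C_{\mathfrak{g}}(a)\cap C_{\mathfrak{g}}(ga)$ we get $g'(ga)=0$. Both centralisers have finite index, so $ga\in \widetilde{C}_A(\mathfrak{g})$. Hence the action of $\mathfrak{g}$ on $A=\widetilde{C}^0$ restricts to $\widetilde{H}^0(\mathfrak{g},A)$. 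The Lie ring identities are inherited from $A$.

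For (2), there are three checks. First, the action on $\widetilde{C}^1$ is well defined on classes, as noted before the statement. Second, the action preserves $\operatorname{AlmDer}(\mathfrak{g},A)$ and $[\operatorname{InnDer}(\mathfrak{g},A)]$. Third, it satisfies the module identities modulo $\sim$.

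The key computation is for an almost derivation $f$ and $g\in D^f$. For $g'$ in the finite-index subgroup $D^f_g\cap\Dom(f)$,
$$(g\cdot f)(g')=g f(g')-f([g,g'])=g' f(g).$$
So $g\cdot f\sim d(f(g))$ on a finite-index subgroup, and $g\cdot f$ is equivalent to an inner derivation.

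This gives both invariance and almost triviality. Inner derivations are almost derivations: by the module axiom, $d(a)([g,g'])=[g,g']a=g g'a-g' g a$. Hence $D^{d(a)}=\mathfrak{g}$, and $[g\cdot f]$ lies in the image of $d$. The set of such $g$ contains $D^f$, which has finite index in $\mathfrak{g}$. Therefore $C_{\mathfrak{g}}([f])\geq D^f$ in $\widetilde{H}^1$, so every class is almost centralised and $\widetilde{C}_{\widetilde{H}^1(\mathfrak{g})}(\mathfrak{g})=\widetilde{H}^1(\mathfrak{g})$.

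It remains to define the action of a general $g$ on $\widetilde{H}^1$. Write $g=g_0+g_1$ with $g_0\in D^f$; this is not a finite decomposition, so this route does not work directly. Instead I verify that $g\cdot f$ is an almost derivation for every $g$ and that $g\cdot d(a)=d(ga)$. The second identity is immediate: $(g\cdot d(a))(g')=gg'a-[g,g']a=g'ga$.

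For the first, fix $g''\in D^f$ and work on the intersection of $D^f_{g''}$, $D^f_{[g,g'']}$, $D^f_g$ and their $\ad$-preimages. This intersection is still of finite index once $g''$ ranges over the finite-index subgroup $D^f\cap \ad_g^{-1}(D^f)$. On it I expand $(g\cdot f)([g'',g'])$ using the derivation rule for $f$ and the Jacobi identity, and obtain $g''(g\cdot f)(g')-g'(g\cdot f)(g'')$.

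The module identity $[g,h]\cdot[f]=g\cdot(h\cdot[f])-h\cdot(g\cdot[f])$ is checked pointwise by Jacobi on a common finite-index domain.

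The main obstacle is this last bookkeeping: proving that $g\cdot f$ is an almost derivation for arbitrary $g$. It requires a Jacobi expansion together with the control that all the relevant intersections of $D^f$-subgroups and $\ad$-preimages remain of finite index. I would organise it by first shrinking to $\Dom(f)\cap\ad_g^{-1}(\Dom(f))$ and then intersecting finitely many commensurable subgroups.
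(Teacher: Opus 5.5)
Your route is essentially the paper's: part (1) via Lemma \ref{DefZ}, and part (2) via three steps. These are a Jacobi expansion showing $g\cdot f\in\operatorname{AlmDer}(\mathfrak{g},A)$ for every $g$, the identity $g\cdot d(a)=d(ga)$, and $g\cdot f\sim d(f(g))$ for $g\in D^f$, which gives almost triviality. Your extra check of the module identity is a useful addition that the paper leaves implicit.

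Two local slips need fixing.

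\textbf{The ``direct'' argument in (1) is circular.} It uses that $C_{\mathfrak{g}}(ga)$ has finite index, which is exactly what you are trying to prove. Instead use $g'(ga)=g(g'a)-[g,g']a$. This gives $C_{\mathfrak{g}}(ga)\geq C_{\mathfrak{g}}(a)\cap\ad_g^{-1}(C_{\mathfrak{g}}(a))$, and the right-hand side has finite index. The citation of Lemma \ref{DefZ} already covers this step, so it is harmless.

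\textbf{In the general-$g$ computation you must not intersect with $D^f_g$.} For $g\in\Dom(f)\setminus D^f$ this subgroup has infinite index by definition, and for $g\notin\Dom(f)$ it is not even defined. So your claim that ``this intersection is still of finite index'' fails precisely in the case that paragraph is meant to handle. Fortunately the expansion never uses $D^f_g$. For $g''\in D^f\cap\ad_g^{-1}(D^f)$ it suffices to take
$g'\in D^f_{g''}\cap D^f_{[g,g'']}\cap\ad_g^{-1}(D^f_{g''})\cap\Dom(g\cdot f)\cap\ad_{g''}^{-1}(\Dom(g\cdot f))$.
The three derivation identities are applied to $[g'',g']$, $[[g,g''],g']$ and $[g'',[g,g']]$, and together with Jacobi they give $(g\cdot f)([g'',g'])=g''(g\cdot f)(g')-g'(g\cdot f)(g'')$. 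This is exactly the paper's choice of subgroups.
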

    \begin{proof}
        (1) We verify that the restriction of $\cdot$ to $\mathfrak{g}\times \widetilde{H}^0(\mathfrak{g},A)$ has image in $\widetilde{H}^0(\mathfrak{g},A)$. Let $a\in \widetilde{H}^0(\mathfrak{g},A)$, then $a\in \widetilde{C}_A({\mathfrak{g}})$. Since $\widetilde{C}_A({\mathfrak{g}})$ is $\mathfrak{g}$-invariant by Lemma \ref{DefZ}, we obtain that $ga\in g\widetilde{C}_A(\mathfrak{g})\leq \widetilde{C}_A(\mathfrak{g})$.\\
        (2) We show that, for every $f\in \operatorname{AlmDer}(\mathfrak{g},A)$ and $g\in \mathfrak{g}$, the partial homomorphism $g\cdot f$ belongs to $\operatorname{AlmDer}(\mathfrak{g},A)$. Let $g'\in \Dom(f)\cap \ad_{g}^{-1}(\Dom(f))$. Then the subgroup $D^{g\cdot f}_{g'}$ coincides with
        $$D^{g\cdot f}_{g'}=\{g''\in \Dom(g\cdot f)\cap \ad^{-1}_{g'}(\Dom(g\cdot f)):\ (g\cdot f)([g',g''])=g'(g\cdot f)(g'')-g''(g\cdot f)(g')\}.$$
        By definition of the action, $(g\cdot f)([g',g''])=gf([g',g''])-f([g,[g',g'']])$, while $(g\cdot f)(g'')=g(f(g''))-f([g,g''])$ and $(g\cdot f)(g')=g(f(g'))-f([g,g'])$. Hence, 
     $$g'(g\cdot f)(g'')-g''(g\cdot f)(g')=g'g(f(g''))-g'f([g,g''])-g''gf(g')+g''f([g,g']).$$
     We now evaluate $(g\cdot f)([g',g''])=g(f([g',g'']))-f([g,[g',g'']])$. By the Jacobi identity and the linearity of $f$, 
     $$-f([g,[g',g'']])=f([g'',[g,g']])-f([g',[g,g'']).$$
       By assumption, the subgroup $D^f\leq \mathfrak{g}$ has finite index in $\mathfrak{g}$, and so also $D^f\cap \ad_{g}^{-1}(D^f)$ has finite index in $\mathfrak{g}$. Take $g'\in D^f\cap \ad_{g}^{-1}(D^f)$ and $g''\in \ad_g^{-1}(D_{g'}^f)\cap D^f_{[g,g']}\cap D^f_{g'}$. The subgroup $\ad_g^{-1}(D_{g'}^f)\cap D^f_{[g,g']}\cap D_{g'}^f$ has finite index by the assumption on $g'$. Let $g''\in \ad_g^{-1}(D_{g'}^f)\cap D^f_{[g,g']}\cap D_{g'}^f$. Then 
       $$f([g'',[g,g']])-f([g',[g,g''])=g''f([g,g'])-[g,g']f(g'')-g'f([g,g''])+[g,g'']f(g').$$
       Thus, we have that $(g\cdot f)([g',g''])=g'(g\cdot f)(g'')-g''(g\cdot f)(g')$ if and only if
       \begin{align*}
          &gf([g',g''])+f([g'',[g,g']])-f([g',[g,g''])\\
          &=gf([g',g''])+g''f([g,g'])-[g,g']f(g'')-g'f([g,g''])+[g,g'']f(g')\\
          &=g'g(f(g''))-g'f([g,g''])-g''gf(g')+g''f([g,g']).
       \end{align*}
       Canceling the terms that appear on both sides, and since $[g,g']f(g'')=gg'f(g'')-g'gf(g'')$ and $[g,g'']f(g')=gg''f(g')-g''gf(g')$, we obtain that
       $$gf([g',g''])=gg'f(g'')-gg'f(g').$$
    This identity holds because $g''\in D^f_{g'}$. Consequently, we have verified that $\mathfrak{g}$ acts on $\operatorname{AlmDer}(\mathfrak{g},A)$.\\
    We now show that, if $f\sim da$ for some $a\in A$, then $gf\sim d(ga)$. Let $f\in \widetilde{C}^1(\mathfrak{g},A)$ and $a\in A$ be such that $f\sim d(a)$. Let $H\leq \mathfrak{g}$ be the kernel of $f-d(a)$. Then, for every $g'\in H\cap \ad_g^{-1}(H)$, 
    $$(g\cdot f)(g')=gf(g')-f([g,g'])=g(g'(a))-[g,g'](a)=g'(ga).$$
    Since $H\cap \ad^{-1}_g(H)$ has finite index in $\mathfrak{g}$, we obtain that $gf\sim d(ga)$. This verifies that $\mathfrak{g}$ acts on $\widetilde{H}^1(\mathfrak{g},A)$.\\
    We finally prove that the action of $\mathfrak{g}$ on $\widetilde{H}^1(\mathfrak{g},A)$ is almost trivial. Let $f\in \operatorname{AlmDer}(\mathfrak{g},A)$. We verify that, for every $g\in D^f$, the map $g\cdot f$ is equivalent to $d(f(g))$. For every $g'\in D^f_g$,
    $$(g\cdot f)(g')=gf(g')-f([g,g'])=g'(f(g)).$$
    Since $D^f_g$ has finite index in $\mathfrak{g}$, we conclude that $gf\sim d(f(g))$.
    \end{proof}
    \section{Snake Lemma}
    We now study the relation between the almost cohomology groups of $\mathfrak{g}$-modules in an exact sequence. Note that for an exact sequence 
    $$0\xrightarrow{} A'\xrightarrow{i} A\xrightarrow{p} A''\xrightarrow{} 0$$
    of $\mathfrak{g}$-modules, the techniques developed in \cite{zamourCohom} cannot be applied. Indeed,
    \begin{itemize}
        \item There are neither second almost cohomology groups nor a well-defined $\widetilde{C}^2(\mathfrak{g},A)$. Therefore, we cannot apply the Snake Lemma (\cite[Theorem 1.3.1]{weibel1994introduction}).
        \item In general, a section of $p$ (i.e. a map $s:A''\to A$ such that $p\circ s=\operatorname{Id}_{A''}$) does not exist. Consequently, the sequence 
        $$0\xrightarrow{} \widetilde{C}^1(\mathfrak{g},A')\xrightarrow{i^1} \widetilde{C}^1(\mathfrak{g},A)\xrightarrow{p^1} \widetilde{C}^1(\mathfrak{g},A'')\xrightarrow{} 0$$
         need to split.
    \end{itemize}
    Nevertheless, we show that one still obtains the following "long" exact sequence in the almost cohomology:
    $$0\to \widetilde{H}^0(\mathfrak{g},A')\xrightarrow{i} \widetilde{H}^0(\mathfrak{g},A)\xrightarrow{p} \widetilde{H}^0(\mathfrak{g},A'')\xrightarrow{\delta_1} \widetilde{H}^1(\mathfrak{g},A')\xrightarrow{i^1}\widetilde{H}^1(\mathfrak{g},A)\xrightarrow{p^1} \widetilde{H}^1(\mathfrak{g},A'').$$
    The following lemma provides, for an exact sequence of $\mathfrak{g}$-modules, two exact sequences that play a fundamental role in the construction of the sequence in almost cohomology.
    \begin{lemma}\label{ExactserC}
    Let $0\xrightarrow{} A'\xrightarrow{i} A \xrightarrow{p} A''\xrightarrow{} 0$ be a short exact sequence of $\mathfrak{g}$-modules. Then it induces the exact sequences $$0\xrightarrow{} \widetilde{C}^0(\mathfrak{g},A')\xrightarrow{i^0} \widetilde{C}^0(\mathfrak{g},A) \xrightarrow{p^0} \widetilde{C}^0(\mathfrak{g},A'')\xrightarrow{} 0$$
    and
    $$0\xrightarrow{} \widetilde{C}^1(\mathfrak{g},A')\xrightarrow{i^1} \widetilde{C}^1(\mathfrak{g},A) \xrightarrow{p^1} \widetilde{C}^1(\mathfrak{g},A'').$$
    \end{lemma}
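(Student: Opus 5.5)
The plan is to define the maps explicitly and check exactness term by term. For degree zero, $\widetilde{C}^0(\mathfrak{g},A')=A'$, $\widetilde{C}^0(\mathfrak{g},A)=A$, $\widetilde{C}^0(\mathfrak{g},A'')=A''$. We take $i^0=i$ and $p^0=p$, so the first sequence is literally the given short exact sequence and there is nothing to prove. For degree one, we set $i^1([f])=[i\circ f]$ and $p^1([f])=[p\circ f]$, where $i\circ f$ and $p\circ f$ keep the domain $\Dom(f)$.

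\emph{Well-definedness and homomorphism property.} These maps are well defined on $\sim$-classes: if $f\sim f_1$, then $(f-f_1)(\Dom(f)\cap\Dom(f_1))$ is finite, so its image under $i$ or $p$ is finite, hence $i\circ f\sim i\circ f_1$ and $p\circ f\sim p\circ f_1$. Additivity holds because $i\circ(f+f_1)=i\circ f+i\circ f_1$ on $\Dom(f)\cap\Dom(f_1)$, and likewise for $p$. Moreover $p^1\circ i^1=0$ on the nose, since $p\circ i=0$.

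\emph{Injectivity of $i^1$.} Suppose $i\circ f\sim 0$. Then $i(f(\Dom(f)))$ is finite, and since $i$ is injective, $f(\Dom(f))$ is finite. Hence $f\sim 0$.

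\emph{Exactness at $\widetilde{C}^1(\mathfrak{g},A)$.} This is the main step. Let $f\in\mathcal{P}\text{-}\operatorname{Hom}(\mathfrak{g},A)$ with $p\circ f\sim 0$. Then $F:=(p\circ f)(\Dom(f))$ is a finite subgroup of $A''$, so $\Dom(f)\cap\ker(p\circ f)$ is the preimage of the trivial subgroup of the finite group $F$ under a homomorphism onto $F$, and therefore has finite index in $\Dom(f)$. This is precisely where partial homomorphisms are needed. Let $H:=\ker(p\circ f)\cap\Dom(f)$, a subgroup of finite index in $\mathfrak{g}$. Then $f(H)\leq\ker p=i(A')$, and we define $f':=i^{-1}\circ f|_H\in\mathcal{P}\text{-}\operatorname{Hom}(\mathfrak{g},A')$, which is a homomorphism since $i$ is injective. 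We have $i\circ f'=f|_H$, and $f|_H\sim f$ because $f-f|_H$ is zero on its domain $H$. Therefore $[f]=i^1([f'])$.

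If one wishes to include the maps on almost derivations as well, the same constructions restrict to $\operatorname{AlmDer}$, since $i$ and $p$ are $\mathfrak{g}$-module maps and therefore preserve the defining identity of $D^f_g$. That restriction is not needed for the statement as given. We expect no real obstacle beyond the finite-index argument in the exactness step above.
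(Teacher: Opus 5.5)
Your proposal is correct and follows the paper's proof essentially step by step. It uses the same definitions $i^1([f])=[i\circ f]$ and $p^1([f])=[p\circ f]$, the same injectivity argument via injectivity of $i$, and the same key step of restricting $f$ to the finite-index subgroup $\{g\in\Dom(f):\ p(f(g))=0\}$ and pulling back through $i$. The paper additionally checks $d\circ i=i^1\circ d$ (and likewise for $p$); the statement itself does not need this, but the subsequent long exact sequence uses it.
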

    \begin{proof}
    We define $i^0=i$ and $p^0=p$. The first sequence is exact since it coincides with the sequence of $\mathfrak{g}$-modules, which is exact by hypothesis. We define $i^1$ by $i^1([f])=[i\circ f]$, and $p^1$ similarly.
    \begin{itemize}
        \item We first verify that the map $i^1$ is well-defined. If $[f]=[f_1]\in \widetilde{C}^1(\mathfrak{g},A)$, then $B:=(f-f_1)(\Dom(f)\cap \Dom(f_1))$ is finite. Since $i$ is a group homomorphism, $i(f(g))-i(f_1(g))=i(f(g)-f_1(g))\in i(B)$ for every $g\in \Dom(f)\cap \Dom(f_1)$. As $i(B)$ is finite, $i\circ f\sim i\circ f_1$ and so the map $i^1$ is well-defined. The same argument applies to $p^1$.
        \item  Since $i$ and $p$ commute with the action of $\mathfrak{g}$, we have that 
        $$d(i(a))(g)=g(i(a))=i(g(a))=(i^1(d(a)))(g).$$
        This shows that $d\circ i=i^1\circ d$. The same holds for $p$.
        \item We prove that $i^1$ is injective. Let $[f]\in \widetilde{C}^1(\mathfrak{g},A')$ be such that $i^1[f]=[0]$. Thus, the subgroup $i\circ f(\Dom(f))$ is finite. Since $i$ is injective, also $f(\Dom(f))$ is finite, and so $[f]=[0]$.
        \item We establish that $\mathrm{im}(i^1)\subseteq \operatorname{ker}(p^1)$. Let $[f]=i^1([h])\in \widetilde{C}^1(\mathfrak{g},A'')$ for $h\in \widetilde{C}^1(\mathfrak{g},A')$. Then $(f-i\circ h)(\Dom(f)\cap \Dom(i\circ h))$ is finite. Consequently, there exists a subgroup $G_1$ of finite index in $\mathfrak{g}$ such that $f(g)=i\circ h(g)$ for every $g\in G_1$. Then, for every $g\in G_1$, 
    $$(p\circ f)(g)=(p\circ (i\circ h))(g)=0$$
    by assumption. Therefore $\mathrm{im}(i^1)\subseteq\ker(p^1)$.
    \item We show that $\mathrm{im}(i^1)\supseteq \operatorname{ker}(p^1)$. Let $f\in \widetilde{C}^1(\mathfrak{g},A)$ be such that $p^1[f]=[0]$. Then the map $p\circ f:\Dom(f)\to A''$ has finite image. Consequently, the subgroup $D=\{g\in \Dom(f):\ p(f(g))=0\}$
    has finite index in $\mathfrak{g}$. Since $\ker(p)=\mathrm{im}(i)$ by hypothesis, the map $h\in \widetilde{C}^1(\mathfrak{g},A')$ defined by $h(g)=a$ with $i(a)=f(g)$ for $g\in D$ is well-defined (since $i$ is injective) and belongs to $\widetilde{C}^1(\mathfrak{g},A')$. Since $\mathrm{im}(f-i\circ h)$ is finite, the claim follows.
    \end{itemize}
    \end{proof}
Note that Lemma \ref{ExactserC} does not hold if we do not assume that the domain of the elements in $\widetilde{C}^1(\mathfrak{g},A)$ can be a proper subgroup of finite index in $\mathfrak{g}$. Indeed, we cannot prove that $\mathrm{im}(i^1)\supseteq \operatorname{ker}(p^1)$.\\
The map $p^1$ is surjective whenever $p$ has a section $s:A''\to A$. Indeed, given $f\in \widetilde{C}^1(\mathfrak{g},A'')$, let $s^1(f)\in \widetilde{C}^1(\mathfrak{g},A)$ be the map defined by $f_1(g)=s(f(g))$ for every $g\in \Dom(f)$. It is immediate to verify that, for every $f\in \widetilde{C}^1(\mathfrak{g},A'')$, we have that $p^1(s^1([f]))=[f]$.\\
We now apply the strategy of the Snake Lemma to construct a "long" exact sequence in almost cohomology.
    \begin{lemma}\label{longseries}
        Let 
        $$0\xrightarrow{} A'\xrightarrow{i} A \xrightarrow{p} A''\xrightarrow{} 0$$
        be an exact sequence of $\mathfrak{g}$-modules. Then we can construct a homomorphism of $\mathfrak{g}$-modules
        $$\delta_1:\widetilde{H}^0(\mathfrak{g},A'')\to \widetilde{H}^1(\mathfrak{g},A')$$
        such that the sequence
        $$0\to \widetilde{H}^0(\mathfrak{g},A')\xrightarrow{i} \widetilde{H}^0(\mathfrak{g},A)\xrightarrow{p} \widetilde{H}^0(\mathfrak{g},A'')\xrightarrow{\delta_1} \widetilde{H}^1(\mathfrak{g},A')\xrightarrow{i^1} \widetilde{H}^1(\mathfrak{g},A)\xrightarrow{p^1}\widetilde{H}^1(\mathfrak{g},A'')$$
         is exact.
    \end{lemma}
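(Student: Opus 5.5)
I will follow the classical snake-lemma construction and check at each step that the "almost" conditions survive. The maps $i\colon \widetilde{H}^0(\mathfrak{g},A')\to\widetilde{H}^0(\mathfrak{g},A)$ and $p$ are the restrictions of $i^0,p^0$. By the commutation $d\circ i=i^1\circ d$ and $d\circ p=p^1\circ d$ of Lemma \ref{ExactserC}, $i$ maps $\ker d$ into $\ker d$, and likewise for $p$. Similarly, $i^1,p^1$ send almost derivations to almost derivations: $D^{i\circ f}_g\supseteq D^f_g$ because $i$ is a $\mathfrak{g}$-homomorphism, and analogously for $p$. They also send inner derivations to inner derivations, so they induce maps on $\widetilde{H}^1$.

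For $\delta_1$, take $a''\in\widetilde{C}_{A''}(\mathfrak{g})$ and choose $a\in A$ with $p(a)=a''$. Then $p^1[d(a)]=[d(a'')]=0$, since $a''\in\ker d$. By the exactness $\ker p^1=\mathrm{im}\, i^1$ from Lemma \ref{ExactserC}, there is $h\in\mathcal{P}\text{-}\operatorname{Hom}(\mathfrak{g},A')$ with $i\circ h\sim d(a)$. Explicitly, $h$ is defined on the finite-index subgroup $C_{\mathfrak{g}}(a'')$ by $i(h(g))=g\cdot a$.

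Next I check that $h$ is an almost derivation. For $g,g'\in C_{\mathfrak{g}}(a'')$ we have $[g,g']\in C_{\mathfrak{g}}(a'')$ and
$$i(h([g,g']))=g g' a-g' g a=g\cdot i(h(g'))-g'\cdot i(h(g)).$$
Injectivity of $i$ then gives $D^h_g\supseteq C_{\mathfrak{g}}(a'')$, so $D^h\supseteq C_{\mathfrak{g}}(a'')$ has finite index. For well-definedness, a different lift $a+i(b)$ changes $h$ by $d(b)$, which is inner. I set $\delta_1(a'')=[h]$ modulo $\operatorname{InnDer}$. Additivity is immediate. Compatibility with the $\mathfrak{g}$-action follows by lifting $g a''$ to $g a$: on a finite-index subgroup the resulting $h$ equals $g\cdot h$, by the same computation used in Lemma \ref{actalmcoh}.

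Exactness is then checked term by term. Injectivity of the first map and exactness at $\widetilde{H}^0(\mathfrak{g},A)$ come from exactness of the $\widetilde{C}^0$ sequence plus one extra point. If $a\in\widetilde{C}_A(\mathfrak{g})$ has $p(a)=0$, then $a=i(a')$, and $a'\in\widetilde{C}_{A'}(\mathfrak{g})$ because $i$ is injective and $C_{\mathfrak{g}}(a')=C_{\mathfrak{g}}(a)$.

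Exactness at $\widetilde{H}^0(\mathfrak{g},A'')$ is the main point. First, $\delta_1 p=0$, since for $a\in\widetilde{C}_A(\mathfrak{g})$ the map $h$ is $\sim 0$. Conversely, suppose $\delta_1(a'')=0$, i.e.\ $h\sim d(b)$ for some $b\in A'$. Then $a-i(b)$ satisfies $d(a-i(b))\sim 0$, so $a-i(b)\in\widetilde{C}_A(\mathfrak{g})$ and it maps to $a''$.

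At $\widetilde{H}^1(\mathfrak{g},A')$, we have $i^1\delta_1(a'')=[d(a)]$, which is inner. Conversely, if $i\circ f\sim d(a)$ for some $a\in A$, then $d(p(a))=p^1(i^1 f)\sim0$. Hence $a''=p(a)\in\widetilde{C}_{A''}(\mathfrak{g})$, and $\delta_1(a'')=[f]$.

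At $\widetilde{H}^1(\mathfrak{g},A)$, clearly $p^1i^1=0$. Now take an almost derivation $f$ with $p\circ f\sim d(a'')$, and lift $a''$ to some $a\in A$. Then $p^1[f-d(a)]=0$, so by Lemma \ref{ExactserC} we get $f-d(a)\sim i\circ h$. Since $f-d(a)$ is an almost derivation, the containment $D^{i\circ h}\subseteq D^h$ (from injectivity of $i$) and Lemma \ref{AlmDer}(4) show that $h$ is an almost derivation.

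The main obstacle I expect is the bookkeeping of domains: ensuring that each subgroup on which identities are verified, such as $C_{\mathfrak{g}}(a'')\cap\ad_g^{-1}(\cdots)$ and kernels of differences, still has finite index. Lemma \ref{AlmDer}(4) handles passage between equivalent representatives, and I will invoke it repeatedly.
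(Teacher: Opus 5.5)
Your proposal is correct and follows essentially the same snake-lemma chase as the paper: you lift $a''$ to $a$, pull $[d(a)]$ back through $i^1$ via Lemma \ref{ExactserC}, verify the almost-derivation identity using injectivity of $i$, and then check exactness term by term. Your explicit representative $h$ on $C_{\mathfrak{g}}(a'')$ and your correction of the lift by $i(b)$ in proving $\ker\delta_1\subseteq\mathrm{im}\,p$ make two steps more careful than in the paper. The paper's text glosses over both points, asserting $[d(a)]=0$ directly.
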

   \begin{proof}
   We construct a homomorphism of $\mathfrak{g}$-modules
   $$\delta_1:\widetilde{H}^0(A'')\to \widetilde{H}^1(A')$$
   such that $\ker(\delta_1)=\mathrm{im}(p_0)$ and $\mathrm{im}(\delta_1)=\operatorname{ker}(i_1)$.\\
   Denote by $d''$ (respectively $d',d$) the differential for the module $(\mathfrak{g},A'')$ (respectively $(\mathfrak{g},A'),(\mathfrak{g},A)$). 
   The following diagram represents the situation we are working in.\\
   \begin{center}{
   \begin{tikzcd}
0\arrow[r]& \widetilde{C}^0(\mathfrak{g},A') \arrow[r,"i"]\arrow[d,"d'"]
& \widetilde{C}^0(\mathfrak{g},A) \arrow[r,"p"]
\arrow[d, phantom, ""{coordinate, name=Z}]\arrow[d,"d"]
& \widetilde{C}^0(\mathfrak{g},A'') \arrow[dll,
"\delta_1",
rounded corners,
to path={ -- ([xshift=2ex]\tikztostart.east)
|- (Z) [near end]\tikztonodes
-| ([xshift=-2ex]\tikztotarget.west)
-- (\tikztotarget)}] \arrow[r]\arrow[d,"d''"]&0\\
0\arrow[r]&\widetilde{C}^1(\mathfrak{g},A') \arrow[r,"i^1"]
& \widetilde{C}^1(\mathfrak{g},A) \arrow[r,"p^1"]
& \widetilde{C}^1(\mathfrak{g},A'')
\end{tikzcd}}
\end{center}
   Fix $a''\in \ker(d'')$. Since $p$ is surjective, there exists some $a\in A$ such that $p(a)=a''$. As $p^1\circ d=d\circ p$, we have that $p^1([d(a)])=[d(pa)]=[d(a'')]=[0]$. Since $\mathrm{im}(i^1)=\ker(p^1)$ by Lemma \ref{ExactserC}, there exists a unique $[f_{a''}]\in \widetilde{C}^1(\mathfrak{g},A')$ such that $i^1([f_{a''}])=[d(a)]$. We define $\delta_1:\ker(d'')=\widetilde{H}^0(\mathfrak{g},A'')\to \widetilde{C}^1(\mathfrak{g},A')/\mathrm{im}(d')$ as the function that sends $a''$ to $[f_{a''}]+\mathrm{im}(i^1)$.
   \begin{itemize}
       \item We first check that this map is well-defined. Let $a_1\in A$ be such that $p(a_1)=a''$. Then $p(a-a_1)=0$. Since $\ker(p)=\mathrm{im}(i)$ and by the injectivity of $i$, there exists a unique $b'\in A'$ such that $a-a_1=i(b')$. As $d\circ i=i^1\circ d'$, we have that $d(a)-d(a_1)=d(a-a_1)=d(i(b'))=i^1(d'(b'))$. Hence $i^{-1}(d(a)-d(a_1))=d'(b')\in \mathrm{im}(d')$. Therefore, the map $\delta_1$ is well-defined.
       \item  A standard calculation shows that $\delta_1$ is a homomorphism of $\mathfrak{g}$-modules.
       \item We prove that $f_{a''}\in \operatorname{AlmDer}(\mathfrak{g},A')$ for every $a''\in \widetilde{H}^0(\mathfrak{g},A'')$. Let $g\in \Dom(f_{a''})$ and $g'\in \Dom(f_{a''})\cap \ad_g^{-1}(\Dom(f_{a''})$. Then $f_{a''}([g,g'])=gf_{a''}(g')-g'f_{a''}(g)$ if and only if, by the injectivity of $i$, we have that $i(f_{a''}([g,g']))=i(gf_{a''}(g'))-i(g'f_{a''}(g))$. By the definition of $f_{a''}$, the previous identity holds if and only if $[g,g'](a)=g(g'a)-g'(ga)$, which is clearly true.
   \end{itemize}
   This shows that $\delta_1:\widetilde{H}^0(\mathfrak{g},A'')\to \widetilde{H}^1(\mathfrak{g},A')$ is a homomorphism of $\mathfrak{g}$-modules.\\
   We now prove that the sequence in almost cohomology is exact.
   \begin{itemize}
       \item The map $i^0:\widetilde{H}^0(\mathfrak{g},A')\to \widetilde{H}^0(\mathfrak{g},A)$ is well-defined. Indeed, for every $a'\in \widetilde{C}_{A'}(\mathfrak{g})$ and $g\in C_{\mathfrak{g}}(a')$, it holds that $g(i(a'))=i(g(a'))=i(0)=0$. Since $C_{\mathfrak{g}}(a')\sim \mathfrak{g}$, the same holds for $i(a')$, and so $i(a)\in \widetilde{H}^0(\mathfrak{g},A)$.
       \item For the same reasons, the map $p:\widetilde{H}^0(\mathfrak{g},A)\to \widetilde{H}^0(\mathfrak{g},A'')$ is well-defined.
       \item $i^0$ is clearly injective since the same holds for $i:A'\to A$. 
       \item We verify that $\mathrm{im}(i)\subseteq \ker(p)$. Let $a\in A$ be such that $i(a')=a$ for some $a'\in A'$. Then, by the exactness of the sequence of $\mathfrak{g}$-modules, we have that $p(a)=0$.
       \item We prove that $\mathrm{im}(i)\supseteq \ker(p)$. Fix $a\in \widetilde{C}_A(\mathfrak{g})$ such that $p(a)=0$. Then, by the exactness of the series of $\mathfrak{g}$-modules, there exists some $a'\in A'$ such that $i(a')=a$. For every $g\in C_{\mathfrak{g}}(a)$, we have that $0=g(a)=g(i(a'))=i(g(a'))$, and so $g\in C_{\mathfrak{g}}(a')$ by the injectivity of $i$. Hence $a'\in \widetilde{H}^0(\mathfrak{g},A')$. This proves that $\ker(p^0)\subseteq \mathrm{im}(i^0)$.
       \item The map $p^1:\widetilde{H}^1(\mathfrak{g},A)\to\widetilde{H}^1(\mathfrak{g},A'')$ is well-defined. Let $f\in \operatorname{AlmDer}(\mathfrak{g},A)$ and $g\in \Dom(f)$. Then, for every $g'\in D^f_g$, 
   $$p^1f([g,g'])=p(gf(g')-g'f(g))=gp(f(g'))-g'pf(g)=g(p^1f)(g')-g'(p^1f)(g).$$
   Therefore, $p^1(f)$ is an almost derivation.
   \item $i^1:\widetilde{H}^1(\mathfrak{g},A')\to\widetilde{H}^1(\mathfrak{g},A)$ is well-defined by a similar argument.
   \item We show that $\ker(p^1)\subseteq\mathrm{im}(i^1)$. Let $f\in \ker(p^1)$. Then $p^1(f)\sim d''(a'')$ for some $a''\in A''$. By the surjectivity of $p$ and since $d''\circ p=p^1\circ d$, there exists some $a\in A$ such that $d''(a'')=d''(p(a))=p^1(d(a))$. Therefore $p^1(f-da)$ is finite. By Lemma \ref{ExactserC}, there exists some $h\in \widetilde{C}^1(\mathfrak{g},A'')$ such that $(f-da)\sim i^1(h)$. Since $i^1$ is injective and $f-da$ is an almost derivation, also $h$ is an almost derivation. Consequently, we have that $[f]=i^1([h])$ in $\widetilde{H}^1(\mathfrak{g},A)$.
   \item We prove that $\ker(p^1)\supseteq\mathrm{im}(i^1)$. Let $i^1([h])\in \widetilde{H}^1(\mathfrak{g},A)$. Then 
   $$p^1(i^1[h])=p^1[i\circ h]=[p\circ (i\circ h)]=[0].$$
   \item We verify that $\ker(\delta_1)=\mathrm{im}(p)$. Let $a''\in \mathrm{im}(p)$. Then $a''=p(a)$ for some $a\in A$ with $[d(a)]=[0]$, and so $\delta_1(a'')=i^{-1}[d(a)]=[0]$. This proves that $\ker(\delta_1)\supseteq \mathrm{im}(p)$. For the converse, let $a''\in \ker(\delta_1)$, and let $a\in A$ be such that $p(a)=a''$. Then $i^1[d(a)]=[0]$ and so $[d(a)]=0$.
   \item Finally, we show that $\mathrm{im}(\delta_1)=\ker(i^1)$. Let $f_{a''}=\delta_1(a'')$. Then $i^1(f_{a''})=[d(a)]$ for some $a\in A$ such that $p(a)=a''$. Hence $[i^1(f)]=[0]$ in $\widetilde{H}^1(\mathfrak{g},A)$. This proves that $\mathrm{im}(\delta_1)\subseteq\ker(i^1)$. For the converse, let $[f]\in\ker(i^1)$. Then $i^1[f]=[da]$. Let $a''\in A''$ be such that $a''=p(a)$. Then $a''\in\ker(d)$ since $d''(a'')=d''(p(a))=p^1(d(a))=p^1(i^1(f))=0$. we conclude that $\delta_1(a'')=f$. 
   \end{itemize}
   \end{proof}
   Two immediate corollaries of Lemma \ref{longseries} allow us to characterise the almost cohomology groups of $(\mathfrak{g},A)$ from the almost cohomology groups of an isogenous $\mathfrak{g}$-module. We begin by showing that, for a module $(\mathfrak{g},A)$ and a $\mathfrak{g}$-submodule of finite index $A_1$ in $A$, the groups $\widetilde{H}^0(\mathfrak{g},A)$ and $\widetilde{H}^1(\mathfrak{g},A)$ are isogenous to $\widetilde{H}^0(\mathfrak{g},A_1)$ and $\widetilde{H}^1(\mathfrak{g},A_1)$, respectively.
   \begin{corollary}\label{A/A_1}
   Let $(\mathfrak{g},A)$ be a $\mathfrak{g}$-module, and let $A_1\leq A$ be a definable $\mathfrak{g}$-submodule of finite index. Then
   \begin{enumerate}
       \item $\widetilde{H}^0(\mathfrak{g},A)$ is isogenous to $ \widetilde{H}^0(\mathfrak{g},A_1)$;
       \item $\widetilde{H}^1(\mathfrak{g},A)$ is isogenous to $ \widetilde{H}^1(\mathfrak{g},A_1)$.
   \end{enumerate}
   \end{corollary}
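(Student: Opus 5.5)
We apply Lemma \ref{longseries} to the short exact sequence of $\mathfrak{g}$-modules
$$0\to A_1\xrightarrow{i} A\xrightarrow{p} A/A_1\to 0,$$
where $A/A_1$ is finite, and compare the almost cohomology of $A_1$ and $A$ through the long exact sequence
$$0\to \widetilde{H}^0(\mathfrak{g},A_1)\xrightarrow{i}\widetilde{H}^0(\mathfrak{g},A)\xrightarrow{p}\widetilde{H}^0(\mathfrak{g},A/A_1)\xrightarrow{\delta_1}\widetilde{H}^1(\mathfrak{g},A_1)\xrightarrow{i^1}\widetilde{H}^1(\mathfrak{g},A)\xrightarrow{p^1}\widetilde{H}^1(\mathfrak{g},A/A_1).$$
The key observation is that both cohomology groups of the finite module $A/A_1$ are finite or trivial.

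First we compute these groups. Since $A/A_1$ is finite, $\widetilde{H}^0(\mathfrak{g},A/A_1)=\widetilde{C}_{A/A_1}(\mathfrak{g})\leq A/A_1$ is finite. For $\widetilde{H}^1$, every partial homomorphism $f:\Dom(f)\to A/A_1$ has finite image, so $f\sim 0$. Hence $\widetilde{C}^1(\mathfrak{g},A/A_1)=0$, and in particular $\widetilde{H}^1(\mathfrak{g},A/A_1)=0$.

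For (1), exactness at $\widetilde{H}^0(\mathfrak{g},A)$ shows that $i$ maps $\widetilde{H}^0(\mathfrak{g},A_1)$ injectively onto $\ker(p)$. The subgroup $\ker(p)$ has finite index in $\widetilde{H}^0(\mathfrak{g},A)$, because the quotient embeds into the finite group $\widetilde{H}^0(\mathfrak{g},A/A_1)$. Thus $i$ is an isomorphism from $\widetilde{H}^0(\mathfrak{g},A_1)$ onto a finite-index subgroup of $\widetilde{H}^0(\mathfrak{g},A)$, which is an isogeny with trivial finite kernels. Concretely this is $\widetilde{C}_{A_1}(\mathfrak{g})=\widetilde{C}_A(\mathfrak{g})\cap A_1$, so it is definable by Lemma \ref{DefZ}.

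For (2), exactness at $\widetilde{H}^1(\mathfrak{g},A)$ together with $\widetilde{H}^1(\mathfrak{g},A/A_1)=0$ shows that $i^1$ is surjective. Exactness at $\widetilde{H}^1(\mathfrak{g},A_1)$ shows that $\ker(i^1)=\mathrm{im}(\delta_1)$, which is finite as the image of a finite group. Hence $i^1$ induces an isomorphism
$$\widetilde{H}^1(\mathfrak{g},A_1)/\mathrm{im}(\delta_1)\cong \widetilde{H}^1(\mathfrak{g},A),$$
and this is an isogeny.

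The main obstacle is the word ``definable'' in the convention that all isogenies are definable. The group $\widetilde{H}^1$ is a quotient of a family of partial homomorphisms and is not obviously an interpretable object. We would therefore read the statement at the level of abstract groups. Alternatively, we would note that the isogeny is induced by composition with the definable map $i$, which is as definable as the ambient objects allow. The algebraic content is just the two finiteness computations above combined with exactness.
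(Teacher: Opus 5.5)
Your proof is correct and follows the paper's argument. For (2), the paper likewise applies Lemma \ref{longseries} to $0\to A_1\to A\to A/A_1\to 0$, and uses that $\widetilde{H}^1(\mathfrak{g},A/A_1)$ is trivial and $\widetilde{H}^0(\mathfrak{g},A/A_1)$ is finite to conclude that $i^1$ is surjective with finite kernel $\mathrm{im}(\delta_1)$; for (1) it simply observes that $\widetilde{C}_{A_1}(\mathfrak{g})=A_1\cap\widetilde{C}_A(\mathfrak{g})$ has finite index in $\widetilde{C}_A(\mathfrak{g})$, which is the concrete description you also give, and it does not address definability of the isogeny in (2) either.
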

   \begin{proof}
   (1) The group $\widetilde{H}^0(\mathfrak{g},A_1)=\widetilde{C}_{A_1}(\mathfrak{g})=A_1\cap \widetilde{C}_{A}(\mathfrak{g})$ is isogenous to $\widetilde{C}_A(\mathfrak{g})$ since $A_1$ has finite index in $A$.\\
    (2) The sequence 
    $$0\to A_1\xrightarrow{i}A\xrightarrow{p}A/A_1\xrightarrow{}0,$$
    where $i$ is the inclusion of $A_1$ in $A$ and $p$ is the projection of $A$ to $A/A_1$, is exact. By Lemma \ref{longseries}, the sequence 
    $$\widetilde{H}^0(\mathfrak{g},A/A_1)\xrightarrow[]{\delta_1} \widetilde{H}^{1}(\mathfrak{g},A_1)\xrightarrow{i^1} \widetilde{H}^1(\mathfrak{g},A)\xrightarrow{p^1}\widetilde{H}^1(\mathfrak{g},A/A_1)$$
    is exact. Since $A/A_1$ is finite, also the group $\widetilde{H}^0(\mathfrak{g},A/A_1)$ is finite. Moreover $\widetilde{H}^1(\mathfrak{g},A/A_1)$ is trivial, since every partial homomorphism $f\in \mathcal{P}\text{-}\operatorname{End}(\mathfrak{g},A/A_1)$ has kernel of finite index in $\mathfrak{g}$. Consequently, the map
    $$i^1:\widetilde{H}^1(\mathfrak{g},A_1)\to \widetilde{H}^1(\mathfrak{g},A)$$
    is a surjective map with finite kernel, since this kernel coincides with the image of $\delta_1$. Thus, the group $\widetilde{H}^1(\mathfrak{g},A_1)$ is isogenous to $\widetilde{H}^1(\mathfrak{g},A)$.
   \end{proof}
   Let $(\mathfrak{g},A)$ be a $\mathfrak{g}$-module, and let $A_2\leq A$ be a finite $\mathfrak{g}$-submodule. We prove that $\widetilde{H}^0(\mathfrak{g},A)$ is isogenous to $\widetilde{H}^0(\mathfrak{g},A/A_2)$ and $\widetilde{H}^1(\mathfrak{g},A)$ is isomorphic to a subgroup of $\widetilde{H}^1(\mathfrak{g},A/A_2)$.
   \begin{corollary}\label{A/A_2}
   Let $(\mathfrak{g},A)$ be a $\mathfrak{g}$-module, and let $A_2$ be a finite $\mathfrak{g}$-submodule of $A$. Then
   \begin{enumerate}
       \item $\widetilde{H}^0(\mathfrak{g},A)$ is isogenous to $\widetilde{H}^0(\mathfrak{g},A/A_2)$;
       \item $\widetilde{H}^1(\mathfrak{g},A)$ is isomorphic to a subgroup of $\widetilde{H}^1(\mathfrak{g},A/A_2)$.
   \end{enumerate}
   \end{corollary}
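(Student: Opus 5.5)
The plan is to apply Lemma \ref{longseries} to the short exact sequence of $\mathfrak{g}$-modules
$$0\to A_2\xrightarrow{i}A\xrightarrow{p}A/A_2\to 0,$$
where $i$ is the inclusion and $p$ the projection. This gives the exact sequence
$$0\to \widetilde{H}^0(\mathfrak{g},A_2)\xrightarrow{i}\widetilde{H}^0(\mathfrak{g},A)\xrightarrow{p}\widetilde{H}^0(\mathfrak{g},A/A_2)\xrightarrow{\delta_1}\widetilde{H}^1(\mathfrak{g},A_2)\xrightarrow{i^1}\widetilde{H}^1(\mathfrak{g},A)\xrightarrow{p^1}\widetilde{H}^1(\mathfrak{g},A/A_2).$$
Both claims should then follow from the finiteness of $A_2$, which controls the two outer cohomology groups.

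For (1), note that $A_2$ is finite, so $\widetilde{H}^0(\mathfrak{g},A_2)=\widetilde{C}_{A_2}(\mathfrak{g})\leq A_2$ is finite. Hence $p$ restricted to $\widetilde{H}^0(\mathfrak{g},A)$ has finite kernel.

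Next I must show that its image has finite index in $\widetilde{H}^0(\mathfrak{g},A/A_2)$. By exactness, this image is $\ker(\delta_1)$, so it suffices that $\widetilde{H}^1(\mathfrak{g},A_2)$ is finite. In fact it is trivial: every partial homomorphism $f$ from $\mathfrak{g}$ to the finite group $A_2$ has finite image, so $f\sim 0$. Thus $\widetilde{C}^1(\mathfrak{g},A_2)$ is already trivial, and so is $\widetilde{H}^1(\mathfrak{g},A_2)$. Consequently $\delta_1=0$ and $p\colon\widetilde{H}^0(\mathfrak{g},A)\to\widetilde{H}^0(\mathfrak{g},A/A_2)$ is surjective with finite kernel. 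This map is an isogeny, with $A=\widetilde{H}^0(\mathfrak{g},A)$, $A_1=\ker p$, and $B$ the whole target.

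For (2), use the same vanishing $\widetilde{H}^1(\mathfrak{g},A_2)=0$. Exactness at $\widetilde{H}^1(\mathfrak{g},A)$ gives $\ker(p^1)=\mathrm{im}(i^1)=0$, so $p^1$ is injective. Therefore $\widetilde{H}^1(\mathfrak{g},A)$ is isomorphic to its image, a subgroup of $\widetilde{H}^1(\mathfrak{g},A/A_2)$. Surjectivity of $p^1$ is not expected, since $p$ need not have a section; this is why (2) only claims an embedding.

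There is essentially no obstacle beyond Lemma \ref{longseries}. The one point I would check carefully is that $\widetilde{H}^1(\mathfrak{g},A_2)$ vanishes exactly and not merely up to finite, since $\sim$ identifies every partial homomorphism into a finite group with $0$. I would also check that $p$ and $p^1$ are the maps induced on the subquotients, as Lemma \ref{longseries} shows.
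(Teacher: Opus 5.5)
Your proof is correct. Part (2) is exactly the paper's argument: apply Lemma \ref{longseries} to $0\to A_2\to A\to A/A_2\to 0$, observe that $\widetilde{H}^1(\mathfrak{g},A_2)=0$ because every partial homomorphism into a finite group is $\sim 0$, and conclude that $p^1$ is injective.

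For part (1) the two routes differ.
\begin{itemize}
\item The paper does not use the long exact sequence there. It computes directly that $a+A_2\in\widetilde{C}_{A/A_2}(\mathfrak{g})$ if and only if $\mathfrak{g}\cdot a$ is finite modulo the finite group $A_2$, that is, if and only if $a\in\widetilde{C}_A(\mathfrak{g})$. Since $A_2\leq\widetilde{C}_A(\mathfrak{g})$, this gives the explicit identification $\widetilde{H}^0(\mathfrak{g},A/A_2)=\widetilde{C}_A(\mathfrak{g})/A_2$.
\item You instead get surjectivity of $p$ from $\delta_1=0$, and the finite kernel from exactness at $\widetilde{H}^0(\mathfrak{g},A)$.
\end{itemize}

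Your version is uniform: both parts flow from the single fact $\widetilde{C}^1(\mathfrak{g},A_2)=0$. It also shows the general pattern that finite outer terms in the long exact sequence force an isogeny. However, it relies on exactness at $\widetilde{H}^0(\mathfrak{g},A'')$ in Lemma \ref{longseries}, and the paper's written proof of that step is quite compressed; you need to correct the lift $a$ by some $i(b')$ with $f_{a''}\sim d'(b')$. The paper's direct computation avoids this, is self-contained, and describes the target explicitly.

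Unwound, the two arguments coincide. For any lift $a$ of $a''\in\widetilde{H}^0(\mathfrak{g},A/A_2)$ we have $p^1[d(a)]=[d''(a'')]=0$, so $[d(a)]\in\ker(p^1)=\mathrm{im}(i^1)=0$ by Lemma \ref{ExactserC}. This is precisely the statement that $\mathfrak{g}\cdot a$ is finite.
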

   \begin{proof}
       (1) The group $\widetilde{H}^0(\mathfrak{g},A/A_2)$ coincides with $\widetilde{C}_{A/A_2}(\mathfrak{g}):=\{a+A_2\in A/A_2:\ \mathfrak{g}(a)\apprle A_2\}$. Since $A_2$ is finite and contained in $\widetilde{C}_A(\mathfrak{g})$, we have that $\widetilde{H}^0(\mathfrak{g},A/A_2)$ equals $\widetilde{C}_{A}(\mathfrak{g})/A_2$. As $A_2$ is finite, point (1) follows.\\
       (2) The sequence
       $$0\to A_2\xrightarrow{i}A\xrightarrow{p}A/A_2\xrightarrow{}0,$$
       where $i$ the inclusion of $A_2$ in $A$ and $p$ is the projection of $A$ to $A/A_2$, is exact. By Lemma \ref{longseries}, the sequence
       $$\widetilde{H}^{1}(\mathfrak{g},A_2)\xrightarrow{i^1} \widetilde{H}^1(\mathfrak{g},A)\xrightarrow{p^1}\widetilde{H}^1(\mathfrak{g},A/A_2)$$
       is exact. Since $A_2$ is finite, we obtain that $\widetilde{H}^1(\mathfrak{g},A_2)$ is trivial by the same argument of Lemma \ref{A/A_1}(2). Consequently, the map $p^1:\widetilde{H}^1(\mathfrak{g},A)\to \widetilde{H}^1(\mathfrak{g},A/A_2)$ is injective and the claim follows.
   \end{proof}
   \section{Almost cohomology groups of a quotient}
   We now analyse the almost cohomology groups of the module $(\mathfrak{g}/\mathfrak{h},A)$ where $\mathfrak{h}$ is a definable ideal of a finite-dimensional Lie ring $\mathfrak{g}$.
   We begin by defining the maps $inf$ and $res$
   \begin{definition}
   Let $\mathfrak{g}$ be a Lie ring acting on the abelian group $A$, both definable in a finite-dimensional theory. Let $\mathfrak{h}\mathrel{\unlhd}\mathfrak{g}$ be an ideal of $\mathfrak{g}$. Let $\pi:\mathfrak{g}\to \mathfrak{g}/\mathfrak{h}$ be the natural projection. Then we define the map 
   $$inf:\widetilde{H}^1(\mathfrak{g}/\mathfrak{h},A)\to \widetilde{H}^1(\mathfrak{g},A)$$
   that sends $[f]\in \widetilde{H}^1(\mathfrak{g}/\mathfrak{h},A)$ to $inf[f]=[inf(f)]\in \widetilde{H}^1(\mathfrak{g},A)$, where $inf(f):\pi^{-1}(\Dom(f))\to A$
   is the partial homomorphism that sends $g\in \pi^{-1}(\Dom(f))$ to $f(g+\mathfrak{h})\in A$.\\
   We define
   $$res:\widetilde{H}^1(\mathfrak{g},A)\to \widetilde{H}^1(\mathfrak{h},A)$$
   as the map that sends $[f]\in \widetilde{H}^1(\mathfrak{g},A)$ to $res[f]=[res(f)]$, where $res(f):\mathfrak{h}\cap \Dom(f)\to A$ is the partial homomorphism
   that sends $h\in \Dom(f)\cap \mathfrak{h}$ to $f(h)\in A$.
   \end{definition}
   The proof of the well-definedness of these maps follow from standard calculations.
   \begin{lemma}\label{exact series}
   Let $\mathfrak{g}$ be a Lie ring acting on the abelian group $A$, both definable in a finite-dimensional theory. Assume that $\widetilde{C}_A(\mathfrak{g})$ is trivial. Let $\mathfrak{h}\mathrel{\unlhd}\mathfrak{g}$ be a definable ideal and denote by $\mathfrak{h}_1$ the $\mathfrak{g}$-ideal $\widetilde{C}_{\mathfrak{h}}(\widetilde{C}_A(\mathfrak{h}))$.
   We define the map 
   $$inf':\widetilde{H}^1(\mathfrak{g}/\mathfrak{h}_1,\widetilde{C}_A(\mathfrak{h}))\to \widetilde{H}^1(\mathfrak{g},A)$$
   given by the composition of $inf$ and of the map $i^1:\widetilde{H}^1(\mathfrak{g},\widetilde{C}_{\mathfrak{h}}(A))\to \widetilde{H}^1(\mathfrak{g},A)$ defined in Lemma \ref{longseries}.
   Then the sequence 
   $$0\xrightarrow{} \widetilde{H}^1(\mathfrak{g}/\mathfrak{h}_1,\widetilde{C}_A(\mathfrak{h}))\xrightarrow{inf'} \widetilde{H}^1(\mathfrak{g},A)\xrightarrow{res} \widetilde{H}^1(\mathfrak{h},A)$$
   is exact.
   \end{lemma}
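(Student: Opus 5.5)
I would prove the three parts of exactness in turn: that $inf'$ is well defined and injective, that $res\circ inf'=0$, and that $\ker(res)\subseteq\mathrm{im}(inf')$. Throughout, set $B:=\widetilde{C}_A(\mathfrak{h})$. By Lemma \ref{DefZ}, $B$ is a definable $\mathfrak{g}$-submodule and $\mathfrak{h}_1$ is a definable $\mathfrak{g}$-ideal. By the definition of $\mathfrak{h}_1$, the ring $\mathfrak{g}/\mathfrak{h}_1$ acts on $B$ only up to finite error, so I will read the action of $g+\mathfrak{h}_1$ on $B$ as defined modulo finite subgroups. Lemma \ref{fin[]}, applied to the $\mathfrak{h}$-module $A$, says that $[\mathfrak{h}_1,B]$ is finite, which is exactly what makes this reading legitimate. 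Almost derivations of $\mathfrak{g}/\mathfrak{h}_1$ then inflate to almost derivations of $\mathfrak{g}$: the defining identity is checked on $\pi^{-1}(D^f_g)$, and it survives up to finitely many error values. Composing with $i^1$ from Lemma \ref{longseries} gives $inf'$.

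\emph{Composition is zero.} Take $f$ on $\mathfrak{g}/\mathfrak{h}_1$. The restriction $res(inf'(f))$ sends $h\in\mathfrak{h}$ to $f(h+\mathfrak{h}_1)$. I want this to be equivalent to an inner derivation of $(\mathfrak{h},A)$, and the natural attempt is to show that it is almost zero. On $\mathfrak{h}_1$ it is constant, equal to $f(0)=0$. Since $\mathfrak{h}/\mathfrak{h}_1$ need not be finite, I would instead use the derivation identity with $g\in\mathfrak{h}$: the term $g'f(g)$ lies in $\mathfrak{h}\cdot B$, which is almost trivial on $B$. Combining this with the symmetry Lemma \ref{symact}, I would aim to show that $res(inf'(f))$ agrees, off a finite set, with $d(b)$ for a suitable $b\in B$. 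I expect this step to need care, and to lean on $\mathfrak{h}$ acting almost trivially on $B$.

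\emph{Injectivity.} Suppose $inf'(f)\sim d(a)$ with $a\in A$. Then for $h$ in a finite-index subgroup of $\mathfrak{h}_1$ we have $h\cdot a=f(0)=0$, and for general $h\in\mathfrak{h}$ the values $h\cdot a$ lie in $B$. The key claim is that $a\in B+F$ for some finite $F$, or that $a$ can be replaced by an element of $B$. From $\mathfrak{h}\cdot a\subseteq B$ up to finite error, I would look at the image of $a$ in $A/B$, which lies in $\widetilde{C}_{A/B}(\mathfrak{h})$. I would then use the analogue of Lemma \ref{g/Z(g)} (after passing to a finite-index piece) to conclude that $a+B$ is almost central. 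The hypothesis $\widetilde{C}_A(\mathfrak{g})=0$ should then pin $a$ down modulo $B$. Once $a$ can be taken in $B$, the relation $f\sim d(a)$ holds in $\widetilde{C}^1(\mathfrak{g}/\mathfrak{h}_1,B)$, so $[f]=0$.

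\emph{Exactness at the middle.} Let $f\in\operatorname{AlmDer}(\mathfrak{g},A)$ with $res(f)\sim d(a)$ on $\mathfrak{h}$. Replacing $f$ by $f-d(a)$, I may assume $f$ vanishes on a finite-index subgroup $\mathfrak{h}'$ of $\mathfrak{h}$. For $g\in D^f$ and $h$ in a finite-index subgroup of $\mathfrak{h}'\cap D^f_g\cap\ad_g^{-1}(\mathfrak{h}')$, the derivation identity gives
$$0=f([g,h])=g f(h)-hf(g)=-hf(g).$$
Hence $\mathfrak{h}\cdot f(g)\apprle 0$, that is, $f(g)\in B$. So $f$ almost takes values in $B$. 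Moreover, for $h\in\mathfrak{h}_1$ we get $f(g+h)=f(g)$ up to finite error, so $f$ factors through $\mathfrak{g}/\mathfrak{h}_1$ up to equivalence. Here I would need uniformity: the index bounds must not depend on $g$, which I would get from Lemma \ref{ucc} / Theorem \ref{ThmB} applied to the uniformly definable family $D^f_g$. This uniformity step, together with the injectivity step, is where I expect the main obstacle. The first thing I would try is to replace $D^f$ by a definable finite-index subgroup on which the relevant kernels are commensurable uniformly.
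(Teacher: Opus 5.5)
Your proposal has a genuine gap, in two places.

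\textbf{First: $\mathfrak{h}_1$ has finite index in $\mathfrak{h}$.} You miss this, and in fact you assert the opposite (``$\mathfrak{h}/\mathfrak{h}_1$ need not be finite''). The set $B=\widetilde{C}_A(\mathfrak{h})$ is definable by Lemma \ref{DefZ}, and trivially $B\apprle\widetilde{C}_A(\mathfrak{h})$. So symmetry (Lemma \ref{symact}) gives $\mathfrak{h}\apprle\widetilde{C}_{\mathfrak{g}}(B)$, i.e.\ $|\mathfrak{h}:\mathfrak{h}_1|<\infty$. This is why your first two steps stall.
\begin{itemize}
\item With this fact, $res(inf'(f))$ factors through the finite group $\mathfrak{h}/\mathfrak{h}_1$. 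Hence it has finite image and is $\sim 0$, which is all the paper needs for $res\circ inf'=0$.
\item For injectivity, you already observe that $inf'(f)\sim d(a)$ forces $h\cdot a=0$ for $h$ in a finite-index subgroup of $\mathfrak{h}_1$, i.e.\ $a\in\widetilde{C}_A(\mathfrak{h}_1)$. Since $|\mathfrak{h}:\mathfrak{h}_1|<\infty$, this group equals $\widetilde{C}_A(\mathfrak{h})=B$, so $[f]=[d(a)]$ in $\widetilde{H}^1(\mathfrak{g}/\mathfrak{h}_1,B)$.
\end{itemize}
Your detour through $\widetilde{C}_{A/B}(\mathfrak{h})$ does not work as written. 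An analogue of Lemma \ref{g/Z(g)} for $\mathfrak{h}$ would need $\widetilde{C}_A(\mathfrak{h})$ to be finite, which is not assumed. The phrase ``$\widetilde{C}_A(\mathfrak{g})=0$ should pin $a$ down'' is not an argument.

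\textbf{Second: how the hypothesis $\widetilde{C}_A(\mathfrak{g})=0$ is used.} If $g\cdot x$ takes only finitely many values as $g$ ranges over a finite-index subgroup of $\mathfrak{g}$, then $x\in\widetilde{C}_A(\mathfrak{g})=0$. In particular, finite $\mathfrak{g}$-invariant subgroups vanish. You need this twice.
\begin{itemize}
\item The group $[\mathfrak{h}_1,B]$ is finite (Lemma \ref{fin[]} for the $\mathfrak{h}$-module $B$, not $A$) and $\mathfrak{g}$-invariant by the Jacobi identity. Hence it is $0$, and $\mathfrak{g}/\mathfrak{h}_1$ acts honestly on $B$. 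Reading the action ``modulo finite subgroups'' gives no module, so $\widetilde{H}^1(\mathfrak{g}/\mathfrak{h}_1,B)$ would not even be defined.
\item At the end of the middle step, ``$f$ factors through $\mathfrak{g}/\mathfrak{h}_1$ up to finite error'' does not produce an element of $\widetilde{C}^1(\mathfrak{g}/\mathfrak{h}_1,B)$. You need $f$ to vanish exactly on $\mathfrak{h}_1\cap\Dom(f)$. A partial homomorphism that is finite on a subgroup need not be equivalent to one vanishing there. For example, the projection $\mathbb{Q}\to\mathbb{Q}/2\mathbb{Z}$ is finite and nonzero on $\mathbb{Z}$; since $\mathbb{Q}$ has no proper finite-index subgroups, it is equivalent only to itself.
\end{itemize}
Your computation showing $f(D^f)\subseteq B$ is correct and matches the paper. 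Exact vanishing then follows from the derivation identity. For $h\in\mathfrak{h}_1\cap D^f$ and $g\in D^f_h\cap D^f$,
\[
g f(h)=hf(g)-f([h,g])\in[\mathfrak{h}_1,B]+f(\mathfrak{h}\cap\Dom(f)),
\]
which is a finite set. Hence $f(h)\in\widetilde{C}_A(\mathfrak{g})=0$, and $f$ restricted to $D^f$ induces a genuine partial homomorphism on $\mathfrak{g}/\mathfrak{h}_1$ with values in $B$. No uniformity argument via Lemma \ref{ucc} or Theorem \ref{ThmB} is needed.
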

   \begin{proof}
   We begin by verifying that $\mathfrak{g}/\mathfrak{h}_1$ acts on $\widetilde{C}_A(\mathfrak{h})$. Since $\mathfrak{h}$ is an ideal in $\mathfrak{g}$, the subgroup $\widetilde{C}_A(\mathfrak{h})$ is a definable $\mathfrak{g}$-module by Lemma \ref{IdeCenAct}. Hence, the Lie subring $\mathfrak{h}_1=\widetilde{C}_{\mathfrak{g}}(\widetilde{C}_A(\mathfrak{h}))$ is a definable ideal in $\mathfrak{g}$ by Lemma \ref{DefZ}. Moreover $\mathfrak{h}_1$ has finite index in $\mathfrak{h}$ by Lemma \ref{symact}. To prove the claim, it suffices to show that $h(\widetilde{C}_A(\mathfrak{h}))\leq \widetilde{C}_A({\mathfrak{g}})=\{0\}$ for every $h\in \mathfrak{h}_1$. By Lemma \ref{fin[]}, the subgroup $[\mathfrak{h}_1,\widetilde{C}_A({\mathfrak{h}})]$ is finite. Furthermore $[\mathfrak{h}_1,\widetilde{C}_A({\mathfrak{h}})]$ is $\mathfrak{g}$-invariant since, for every $h\in \mathfrak{h}_1$ and $a\in \widetilde{C}_A({\mathfrak{h}})$, 
   $$g(h(a))=[g,h](a)+h(g(a))\in [\mathfrak{h}_1,\widetilde{C}_A({\mathfrak{h}})].$$
   Therefore $[\mathfrak{h}_1,\widetilde{C}_A({\mathfrak{h}})]$ is contained in $\widetilde{C}_A(\mathfrak{g})$, which is trivial, and this shows that $\mathfrak{g}/\mathfrak{h}_1$ acts on $\widetilde{C}_A(\mathfrak{h})$.\\
   To prove the lemma, it suffices to show that $inf'$ is injective and $\mathrm{im}(inf')=\ker(res)$.
   \begin{itemize}
       \item We first verify that $\ker(inf')$ is trivial. Let $f\in \widetilde{H}^1(\mathfrak{g}/\mathfrak{h}_1,\widetilde{C}_A(\mathfrak{h}))$ be such that $inf'(f)\sim da$ for some $a\in A$. Then $inf(f)(\mathfrak{h}_1)=\{0\}$ by the definition of $inf$. Therefore $f(\mathfrak{h}_1)\sim da(\mathfrak{h}_1)=\mathfrak{h}_1(a)$ is finite, and so $a\in \widetilde{C}_A(\mathfrak{h}_1)$. Since $\mathfrak{h}_1$ has finite index in $\mathfrak{h}$ by Lemma \ref{symact}, we may conclude that $a\in \widetilde{C}_A(\mathfrak{h})$. Consequently, the element $[f]$ equals $[da]$ in $\widetilde{H}^1(\mathfrak{g}/\mathfrak{h}_1,\widetilde{C}_A(\mathfrak{h}))$.
       \item We show that $\mathrm{im}(inf')\subseteq \ker(res)$. Let $[s]\in \widetilde{H}^1(\mathfrak{g},A)$ and assume that $[s]=[inf(f)]$ for some $f\in \widetilde{H}^1(\mathfrak{g}/\mathfrak{h}_1,\widetilde{C}_A(\mathfrak{h}))$. Then $s\sim inf(f)+da$ for some $a\in A$. Applying $res$, we obtain that $res(inf'(s))\sim 0$ and $res(da)\sim da$, therefore $res[s]=[0]$.
       \item We prove that $\mathrm{im}(inf')\supseteq \ker(res)$. Let $[s]\in \widetilde{H}^1(\mathfrak{g},A)$ be such that $res[s]=[0]$ in $\widetilde{H}^1(\mathfrak{h},A)$. Then there exists some $a\in A$ such that $res(s)\sim da$ when restricted to $\mathfrak{h}$. After replacing $s$ by $s-da\in \widetilde{C}^1(\mathfrak{g},A)$, we may assume that $s(\mathfrak{h}\cap \Dom(s))$ is finite. We may restrict the domain of $s$ to the subgroup $D^s\leq \mathfrak{g}$ introduced in Definition \ref{DefAlmDer}. We first prove that $s(\Dom(s))\leq \widetilde{C}_{A}(\mathfrak{h})$. By assumption, for every $g\in \Dom(s)$, the subgroup $D^s_g$ (Definition \ref{DefAlmDer}) has finite index in $\mathfrak{g}$, and 
   $$(\mathfrak{h}\cap D^s_g)s(g)\leq s([g,\mathfrak{h}\cap D^s_g])+gs(\mathfrak{h}\cap D^s_g).$$
   Since $\mathfrak{h}$ is an ideal in $\mathfrak{g}$, we have that $s([g,D^s_g\cap \mathfrak{h}])\leq s(\mathfrak{h})$ and $g(s(\mathfrak{h}\cap D^s_g))$ is finite. Consequently, the subgroup $\mathfrak{h}(s(g))$ is finite for every $g\in \Dom(s)$. This shows that $\mathrm{im}(s)\leq \widetilde{C}_{A}(\mathfrak{h})$.\\
   We prove that $s(\mathfrak{h}_1)$ is a finite $\mathfrak{g}_1$-module for some definable Lie subring $\mathfrak{g}_1\sqsubseteq \mathfrak{g}$ of finite index in $\mathfrak{g}$. As $s(\mathfrak{h}_1)$ is finite, the $\mathfrak{g}_1$-invariance implies that $s(\mathfrak{h}_1)\leq \widetilde{C}_A(\mathfrak{g})=\{0\}$. As $s(\mathfrak{h}\cap \Dom(s))$ is finite and $\mathfrak{h}_1$ is a Lie subring of $\mathfrak{h}$, we have that $s(\mathfrak{h}_1)=\{s(h_i)\}_{i\leq n}$ for some $n\in \mathbb{N}$. Let $D^s_{\mathfrak{h}_1}:=\bigcap_{i\leq n} D^s_{h_i}$. Since each $D^s_{h_i}$ has finite index in $\mathfrak{g}$ by hypothesis, also $D^s_{\mathfrak{h}_1}\leq \mathfrak{g}$ has finite index in $\mathfrak{g}$. Then, for every $g\in D^s_{\mathfrak{h}_1}$,
   $$g\big(s(\mathfrak{h}_1\cap \Dom(s))\big)\leq s([\mathfrak{h}_1,g]\cap \Dom(s))+\mathfrak{h}_1s(g).$$
   The term $s([\mathfrak{h}_1,g]\cap \Dom(s))$ is contained in $s(\mathfrak{h}_1)$ since $\mathfrak{h}_1$ is an ideal of $\mathfrak{g}$. The term $\mathfrak{h}_1s(g)$ is contained in $[\mathfrak{h}_1,\widetilde{C}_A(\mathfrak{h})]$ since $\mathrm{im}(s)\subseteq \widetilde{C}_A(\mathfrak{h})$. The subgroup $[\mathfrak{h}_1,\widetilde{C}_A(\mathfrak{h})]$ is finite by Lemma \ref{fin[]}, and it is $\mathfrak{g}$-invariant since $\widetilde{C}_A(\mathfrak{h})$ is $\mathfrak{g}$-invariant and $\mathfrak{h}_1$ is an ideal in $\mathfrak{g}$. Therefore, the subgroup $s(\mathfrak{h}_1)$ is contained in $\widetilde{C}_A(\mathfrak{g})=\{0\}$, and hence $s$ is an almost derivation. Consequently $[s]\in \widetilde{H}^1(\mathfrak{g}/\mathfrak{h}_1,\widetilde{C}_A(\mathfrak{h}))$. This completes the proof.
   \end{itemize}
   \end{proof}
    The following result proves the map $res$ is injective when $\mathfrak{h}$ has finite index in $\mathfrak{g}$.
   \begin{lemma}\label{H1finind}
   Let $\mathfrak{g}$ be a Lie ring acting on the abelian group $A$, both definable in a finite-dimensional theory. Let $\mathfrak{h}\sqsubseteq \mathfrak{g}$ be a definable Lie subring of finite index in $\mathfrak{g}$. Then $\widetilde{H}^1(\mathfrak{g},A)$ embeds into $\widetilde{H}^1(\mathfrak{h},A)$.
   \end{lemma}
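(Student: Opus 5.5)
The plan is to show that the restriction map $res:\widetilde{H}^1(\mathfrak{g},A)\to\widetilde{H}^1(\mathfrak{h},A)$, sending $[f]$ to the class of $f|_{\mathfrak{h}\cap\Dom(f)}$, is a well-defined injective homomorphism. Here $\mathfrak{h}$ is only a Lie subring, not an ideal, and the definition of $res$ given earlier should be read in this setting.

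\textbf{Well-definedness and additivity.} Let $f\in\operatorname{AlmDer}(\mathfrak{g},A)$.
\begin{itemize}
\item The domain $\mathfrak{h}\cap\Dom(f)$ has finite index in $\mathfrak{h}$, so $res(f)$ is a partial homomorphism on $\mathfrak{h}$.
\item For $h\in D^f\cap\mathfrak{h}$, the group $D^{res(f)}_h$ contains $D^f_h\cap\mathfrak{h}$, since $[h,h']\in\mathfrak{h}$ whenever $h'\in\mathfrak{h}$. As $D^f_h\sim\mathfrak{g}$, this intersection has finite index in $\mathfrak{h}$. Hence $D^{res(f)}\geq D^f\cap\mathfrak{h}$ has finite index in $\mathfrak{h}$, and $res(f)$ is an almost derivation.
\item If $f\sim f_1$, then $(f-f_1)$ restricted to $\mathfrak{h}$ still has finite image.
\item Restricting $d(a)$ gives the inner derivation $d(a)|_{\mathfrak{h}}$ of $(\mathfrak{h},A)$.
\end{itemize}
So $res$ descends to a group homomorphism on $\widetilde{H}^1$.

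\textbf{Injectivity.} Suppose $res[f]=[0]$. Then there is $a\in A$ with $f|_{\mathfrak{h}}\sim d(a)|_{\mathfrak{h}}$. Put $f'=f-d(a)$; this is still an almost derivation by Lemma \ref{AlmDer}, and it represents the same class as $f$ in $\widetilde{H}^1(\mathfrak{g},A)$. By construction $f'(\mathfrak{h}\cap\Dom(f'))$ is finite. Since $\mathfrak{h}\cap\Dom(f')$ has finite index in $\mathfrak{h}$, which has finite index in $\mathfrak{g}$, it has finite index in $\mathfrak{g}$. Thus the restriction of $f'$ to $\mathfrak{h}\cap\Dom(f')$ is a partial homomorphism on $\mathfrak{g}$ with finite image.

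It remains to see that $f'\sim f'|_{\mathfrak{h}\cap\Dom(f')}$. This holds because both are defined on $\mathfrak{h}\cap\Dom(f')$ and agree there, so their difference is zero on a subgroup of finite index. Consequently $[f']=[0]$ in $\widetilde{C}^1(\mathfrak{g},A)$, and hence $[f]=[d(a)]=0$ in $\widetilde{H}^1(\mathfrak{g},A)$. The key observation is that $\sim$ only sees a subgroup of finite index of $\mathfrak{g}$, and $\mathfrak{h}$ is such a subgroup.

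\textbf{Main obstacle.} I expect no substantial obstacle. The one point needing care is the almost-derivation condition for $res(f)$: the sets $D^f_h$ are defined inside $\mathfrak{g}$, and one must check that intersecting with $\mathfrak{h}$ keeps them of finite index in $\mathfrak{h}$. This follows from $|\mathfrak{h}:\mathfrak{h}\cap D|\leq|\mathfrak{g}:D|$. Note that neither the finite-dimensionality hypotheses nor the results of the previous sections are needed.
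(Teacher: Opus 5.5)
Your proposal is correct and follows the paper's own route. If $res(f)\sim d(a)|_{\mathfrak{h}}$, then $f-d(a)$ has finite image on $\mathfrak{h}\cap\Dom(f)$, which has finite index in $\mathfrak{g}$, so $f\sim d(a)$ and $[f]=0$ in $\widetilde{H}^1(\mathfrak{g},A)$. Your extra check that $res$ is well defined when $\mathfrak{h}$ is only a Lie subring, via $D^{res(f)}_h\geq D^f_h\cap\mathfrak{h}$, fills in what the paper dismisses as a standard calculation.
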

   \begin{proof}
   Let $res$ be the map
   $$res:\widetilde{H}^1(\mathfrak{g},A)\to \widetilde{H}^1(\mathfrak{h},A)$$
   as previously defined. We verify that $res$ is injective. Let $[f]\in \widetilde{H}^1(\mathfrak{g},A)$ be such that $f_{|\mathfrak{h}}\sim da$ for some $a\in A$. Then $(f-da)(\mathfrak{h})$ is finite. The same holds for $(f-da)(\mathfrak{g})$, and so $[f]=[0]$ in $\widetilde{H}^1(\mathfrak{g},A)$. 
   \end{proof}
   The following lemma shows that the image of $res$ is contained in $\widetilde{C}_{\mathfrak{g}}(\widetilde{H}^1(\mathfrak{h},A))$.
   \begin{lemma}\label{ImRes}
   Let $\mathfrak{g}$ be a Lie ring acting on an abelian group $A$, both definable in a finite-dimensional theory. Let $\mathfrak{h}$ be a definable Lie subring of $\mathfrak{g}$. Then $res(\widetilde{H}^1(\mathfrak{g},A))\leq \widetilde{C}_{\mathfrak{g}}(\widetilde{H}^1(\mathfrak{h},A))$.
   \end{lemma}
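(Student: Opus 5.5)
The plan is to reuse, for the restricted cocycle, the computation that closes the proof of Lemma \ref{actalmcoh}(2): there we showed that for an almost derivation $f$ and $g\in D^f$ one has $g\cdot f\sim d(f(g))$. The idea is that the same identity, restricted to $\mathfrak{h}$, shows that every element of a finite-index subgroup of $\mathfrak{g}$ sends $[res(f)]$ to an inner class. Such a class is zero in $\widetilde{H}^1(\mathfrak{h},A)$, so $[res(f)]$ is almost centralised.

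\emph{Step 1: fix a class and a good representative.} Fix $[f]\in \widetilde{H}^1(\mathfrak{g},A)$ with $f\in\operatorname{AlmDer}(\mathfrak{g},A)$. By Lemma \ref{AlmDer}(4), changing the representative within its $\sim$-class keeps it an almost derivation. Restricting the domain to $D^f$ does not change the class. So I may assume $\Dom(f)=D^f$, which has finite index in $\mathfrak{g}$. Then $res(f)$ is the restriction of $f$ to $\mathfrak{h}\cap D^f$.

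\emph{Step 2: compute the action of an element $x\in D^f$ on $res(f)$.} I take the action of $x$ on $\widetilde{C}^1(\mathfrak{h},A)$ to be given by the same formula as in the definition before Lemma \ref{actalmcoh}: $(x\cdot res(f))(h')=x(f(h'))-f([x,h'])$. It is defined for those $h'\in\mathfrak{h}\cap\Dom(f)$ with $[x,h']\in\mathfrak{h}\cap\Dom(f)$; this is automatic in the cases where the action makes sense, namely when $x\in\mathfrak{h}$ or when $\mathfrak{h}$ is an ideal. For $h'\in \mathfrak{h}\cap D^f_x$ the defining identity of $D^f_x$ gives
$$(x\cdot res(f))(h')=x f(h')-f([x,h'])=h'(f(x))=d(f(x))(h').$$
Since $D^f_x\sim\mathfrak{g}$, the subgroup $\mathfrak{h}\cap D^f_x$ has finite index in $\mathfrak{h}$. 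Hence $x\cdot res(f)\sim d_{\mathfrak{h}}(f(x))$, which is an inner derivation of $(\mathfrak{h},A)$. Therefore $x\cdot[res(f)]=[0]$ in $\widetilde{H}^1(\mathfrak{h},A)$.

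\emph{Step 3: conclude.} The centraliser of $[res(f)]$ among the acting elements therefore contains $D^f$, intersected with $\mathfrak{h}$ if only $\mathfrak{h}$ acts. This subgroup has finite index, so $[res(f)]\in\widetilde{C}_{\widetilde{H}^1(\mathfrak{h},A)}(\cdot)$, which is the claimed containment.

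The main obstacle I expect is bookkeeping rather than a substantive difficulty. One has to check that the domains involved are of finite index, namely $\mathfrak{h}\cap D^f_x\cap\ad_x^{-1}(\Dom(f))$. One also has to check that the computation does not depend on the chosen representative of $[f]$, since changing $f$ by a map with finite image changes $x\cdot res(f)$ only by a finite-image map; the same observation made after the definition of the action handles this.
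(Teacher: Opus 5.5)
Your proposal is correct and takes essentially the same route as the paper. Both reuse the last computation in Lemma \ref{actalmcoh}(2): for $x\in D^f$ and $h'\in\mathfrak{h}\cap D^f_x$ one has $xf(h')-f([x,h'])=h'(f(x))$, so $x\cdot[res(f)]$ is the class of the inner derivation $d(f(x))$ on $\mathfrak{h}$ for every $x$ in the finite-index subgroup $D^f$. Taking $res(f)$ itself as the representative, rather than an arbitrary $f\sim s_{|\mathfrak{h}}$ as the paper does, and noting that $x$ acts on $\widetilde{C}^1(\mathfrak{h},A)$ only when $x\in\mathfrak{h}$ or $\mathfrak{h}$ is an ideal, are cleaner bookkeeping rather than a different argument.
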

   \begin{proof}
   Let $[f]\in \widetilde{H}^1(\mathfrak{h},A)$ be such that $f\sim s_{|\mathfrak{h}}$ for some $s\in \widetilde{H}^1(\mathfrak{g},A)$. Then, by the same argument of Lemma \ref{actalmcoh}, we obtain that $g\cdot s\sim d(s(g))$ for every $g\in D^f$. Hence, for every $h\in D^f_g\cap \ker(f-s_{|\mathfrak{h}})$, 
   $$(g\cdot f)(h)=f[g,h]+gf(h)=s[g,h]+gs(h)=hs(g)$$
   and so $g\cdot f\in [\operatorname{InnDer}(\mathfrak{h},A)]$. The lemma follows from the arbitrariness of $[f]\in \widetilde{H}^1(\mathfrak{h},A)$.
   \end{proof}
   As a corollary of Lemma \ref{ImRes}, we show that, if we quotient $\mathfrak{g}$ by an almost central ideal contained in $\widetilde{C}_{\mathfrak{g}}(A)$, the $1$st almost cohomology group that we obtain is isogenous to the original one.
   \begin{corollary}\label{QUoAlmCen}
   Let $(\mathfrak{g},A)$ be a definable $\mathfrak{g}$-module of finite dimension with $\widetilde{C}_A(\mathfrak{g})=\{0\}$. Let $\mathfrak{h}\mathrel{\unlhd}\mathfrak{g}$ be a definable ideal such that $\mathfrak{h}\leq \widetilde{C}_{\mathfrak{g}}(A)\cap \widetilde{Z}(\mathfrak{g})$. Then $\widetilde{H}^1(\mathfrak{g}/\mathfrak{h},A)$ is isogenous to $\widetilde{H}^1(\mathfrak{g}/A)$.
   \end{corollary}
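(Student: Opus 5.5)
The plan is to compare the two groups through the inflation–restriction sequence of Lemma \ref{exact series}, applied to the ideal $\mathfrak{h}$. (Here $\widetilde{H}^1(\mathfrak{g}/A)$ in the statement is read as $\widetilde{H}^1(\mathfrak{g},A)$.) Since $\widetilde{C}_A(\mathfrak{g})=\{0\}$, that lemma gives an exact sequence
$$0\to \widetilde{H}^1(\mathfrak{g}/\mathfrak{h}_1,\widetilde{C}_A(\mathfrak{h}))\xrightarrow{inf'}\widetilde{H}^1(\mathfrak{g},A)\xrightarrow{res}\widetilde{H}^1(\mathfrak{h},A),$$
where $\mathfrak{h}_1=\widetilde{C}_{\mathfrak{h}}(\widetilde{C}_A(\mathfrak{h}))$.

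The first step is to identify the left-hand term with $\widetilde{H}^1(\mathfrak{g}/\mathfrak{h},A)$ up to isogeny. Because $\mathfrak{h}\leq\widetilde{C}_{\mathfrak{g}}(A)$, Lemma \ref{symact} gives $A\apprle\widetilde{C}_A(\mathfrak{h})$. Hence $\widetilde{C}_A(\mathfrak{h})$ is a $\mathfrak{g}$-submodule of finite index in $A$, definable by Lemma \ref{DefZ}. By Corollary \ref{A/A_1}, its $\widetilde{H}^1$ is isogenous to that of $A$. Likewise $\mathfrak{h}_1$ has finite index in $\mathfrak{h}$, so $\mathfrak{h}/\mathfrak{h}_1$ is finite. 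I would then show that $\widetilde{H}^1(\mathfrak{g}/\mathfrak{h}_1,B)$ and $\widetilde{H}^1(\mathfrak{g}/\mathfrak{h},B)$ are isogenous for $B=\widetilde{C}_A(\mathfrak{h})$. The idea is that a partial homomorphism on $\mathfrak{g}/\mathfrak{h}_1$ can be restricted to a finite-index subgroup avoiding $\mathfrak{h}/\mathfrak{h}_1$, and that the induced map is injective with finite kernel. Some care is needed here: $\mathfrak{h}$ must act trivially on $B$ for $\mathfrak{g}/\mathfrak{h}$ to act, and this follows from the argument in Lemma \ref{exact series} showing that $[\mathfrak{h}_1,B]=0$, together with the finiteness of $\mathfrak{h}/\mathfrak{h}_1$. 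The upshot is that $\widetilde{H}^1(\mathfrak{g}/\mathfrak{h},A)$ is, up to isogeny, a subgroup of $\widetilde{H}^1(\mathfrak{g},A)$ via $inf'$.

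The second step, which I expect to be the main obstacle, is to show that $res(\widetilde{H}^1(\mathfrak{g},A))$ is finite. Then $inf'$ has image of finite index, and together with injectivity this gives the isogeny. For this I would use Lemma \ref{ImRes}: the image of $res$ lies in $\widetilde{C}_{\widetilde{H}^1(\mathfrak{h},A)}(\mathfrak{g})$. Next I would analyse $\widetilde{H}^1(\mathfrak{h},A)$ directly, using that $\mathfrak{h}$ is almost central in $\mathfrak{g}$ and almost centralises $A$. Given an almost derivation $s$ of $\mathfrak{g}$, the proof of Lemma \ref{exact series} shows that $\mathfrak{h}\cdot s(g)$ is finite, so $\mathrm{im}(s)\leq\widetilde{C}_A(\mathfrak{h})$ after restricting domains. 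Then, for $h\in\mathfrak{h}$ and $g\in D^s_h$, the identity $s([g,h])=g s(h)-h s(g)$ applies. In it, $[g,h]$ ranges over a finite set, because $\mathfrak{h}\leq\widetilde{Z}(\mathfrak{g})$. Also $h s(g)\in[\mathfrak{h}_1,\widetilde{C}_A(\mathfrak{h})]$ is finite and, by the argument in Lemma \ref{exact series}, actually zero for $h\in\mathfrak{h}_1$.

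Consequently $g\, s(h)$ lies in a finite set for almost all $g$, so $s(h)\in\widetilde{C}_A(\mathfrak{g})=\{0\}$ on a finite-index subgroup of $\mathfrak{h}_1$. Hence $res(s)\sim 0$. This would show that $res$ is in fact trivial, so $inf'$ is an isomorphism onto $\widetilde{H}^1(\mathfrak{g},A)$. Combined with the first step, the isogeny follows. The delicate points are uniformity: the finitely many $[g,h]$ must be handled via intersections of the finite-index subgroups $D^s_{h}$, exactly as in the proof of Lemma \ref{exact series}. One also has to check that the identifications above are compatible with passing to $\mathfrak{h}_1$.
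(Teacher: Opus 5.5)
Your argument is correct. It shares the paper's overall frame: pass to the finite-index submodule $\widetilde{C}_A(\mathfrak{h})$ via Corollary \ref{A/A_1}, use the inflation--restriction sequence of Lemma \ref{exact series}, and show that $res$ vanishes. The key step, however, is argued differently.

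\textbf{The paper's route.} It bounds the image of $res$ by $\widetilde{C}_{\mathfrak{g}}(\widetilde{H}^1(\mathfrak{h},A))$ via Lemma \ref{ImRes}, and then shows that this almost centraliser is trivial. For a class $[f]$ almost fixed by $\mathfrak{g}$, it fixes $g$ and writes $g\cdot f\sim d(a_g)$. This gives $g f(h)=0$ only for $h$ in a finite-index subgroup of $\mathfrak{h}$ that depends on $g$. To exchange the quantifiers it then needs Lemma \ref{symact} twice: once to get $\widetilde{C}_{\mathfrak{g}}(\mathfrak{h})\sim\mathfrak{g}$, and once to pass from $\widetilde{C}_{\mathfrak{g}}(f(\mathfrak{h}))\sim\mathfrak{g}$ to $f(\mathfrak{h})\apprle\widetilde{C}_A(\mathfrak{g})$.

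\textbf{Your route.} You keep an almost derivation $s$ of all of $\mathfrak{g}$, fix $h\in\mathfrak{h}\cap D^s$, and let $g$ run through $D^s_h$. Since $[\mathfrak{g},h]$ and $hA$ are finite, $g\,s(h)=s([g,h])+h\,s(g)$ takes only finitely many values, so $s(h)\in\widetilde{C}_A(\mathfrak{g})=\{0\}$ directly.

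\textbf{Comparison.} Your version is more elementary. It uses neither Lemma \ref{ImRes} nor the symmetry lemma, so no definability of $f(\mathfrak{h})$ is needed. It also gives the stronger conclusion that $res(s)\sim 0$ already as a cochain. What the paper's route buys is a more general fact: no nonzero class of $\widetilde{H}^1(\mathfrak{h},A)$ is almost fixed by $\mathfrak{g}$, whether or not it comes from $\mathfrak{g}$.

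\textbf{Remarks on the write-up.}
\begin{itemize}
\item Your appeal to Lemma \ref{ImRes} ends up unused.
\item The ``uniformity'' concern does not arise: $h$ is fixed, and only the single subgroup $D^s_h$ enters.
\item The inclusion $\mathrm{im}(s)\leq\widetilde{C}_A(\mathfrak{h})$ is not what the proof of Lemma \ref{exact series} gives, since there $s(\mathfrak{h})$ was already finite. It holds simply because $\widetilde{C}_A(\mathfrak{h})$ has finite index in $A$. In any case you do not need it, since finiteness of $hA$ suffices.
\item Your first step is vacuous. Every $h\in\mathfrak{h}$ has $hA$ finite, so $\mathfrak{h}_1=\widetilde{C}_{\mathfrak{h}}(\widetilde{C}_A(\mathfrak{h}))=\mathfrak{h}$, as the paper notes. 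This matters, because the sketched comparison of $\mathfrak{g}/\mathfrak{h}_1$ with $\mathfrak{g}/\mathfrak{h}$ would not work as stated. Restricting to a finite-index subgroup ``avoiding'' a finite subgroup is not possible in general. Finiteness of $\mathfrak{h}/\mathfrak{h}_1$ alone would also not give $[\mathfrak{h},B]=0$. With $\mathfrak{h}_1=\mathfrak{h}$, that vanishing is exactly what Lemma \ref{exact series} proves.
\end{itemize}
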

   \begin{proof}
   Since $\mathfrak{h}$ is contained in $\widetilde{C}_{\mathfrak{g}}(A)$, the subgroup $\widetilde{C}_{A}(\mathfrak{h})$ has finite index in $A$ by Lemma \ref{symact}. By Lemma \ref{A/A_1}, the $0$th and the $1$st almost cohomology group of $(\mathfrak{g},A)$ are isogenous respectively to the $0$th and the $1$st almost cohomology group of $(\mathfrak{g},\widetilde{C}_{\mathfrak{h}}(A))$. Hence, without loss of generality, we may assume $A=\widetilde{C}_A(\mathfrak{h})$. By Lemma \ref{exact series}, the sequence
   $$0\xrightarrow{} \widetilde{H}^1(\mathfrak{g}/\mathfrak{h}_1,A)\xrightarrow{inf} \widetilde{H}^1(\mathfrak{g},A)\xrightarrow{res} \widetilde{C}_{\mathfrak{g}}(\widetilde{H}^1(\mathfrak{h},A))$$
   is exact, where $\mathfrak{h}_1=\widetilde{C}_{\mathfrak{h}}(A)=\mathfrak{h}$. We show that $\widetilde{C}_{\mathfrak{g}}(\widetilde{H}^1(\mathfrak{h},A))$ is trivial. Fix $f\in \widetilde{C}_{\mathfrak{g}}(\widetilde{H}^1(\mathfrak{h},A))$. By definition, there exists a definable subgroup $G_1\leq \mathfrak{g}$ of finite index such that, for every $g\in G_1$, the partial homomorphism $g\cdot f$ is equivalent to $d(a_g)$ for some $a_g\in A$. By Lemma \ref{symact}, the ideal $\widetilde{C}_{\mathfrak{g}}(\mathfrak{h})\mathrel{\unlhd} \mathfrak{g}$ has finite index in $\mathfrak{g}$. We verify that $f( C_{\mathfrak{h}}(a_g) \cap C_{\mathfrak{h}}(g))=0$ for every $g\in G_1\cap \widetilde{C}_{\mathfrak{g}}(\mathfrak{h})$. Let $h\in C_{\mathfrak{h}}(a_g) \cap C_{\mathfrak{h}}(g)$, then 
   $$0=h(a_g)=d(a_g)(h)=(g\cdot f)(h)=gf(h)-f([g,h])=gf(h)$$
   since $h\in C_{\mathfrak{g}}(g)$. As $C_{\mathfrak{h}}(a_g) \cap C_{\mathfrak{h}}(g)\leq \mathfrak{h}$ has finite index in $\mathfrak{h}$ by hypothesis, the subgroup $gf(\mathfrak{h})$ is finite for every $g\in G_1\cap \widetilde{C}_{\mathfrak{g}}(\mathfrak{h})$. Therefore $\widetilde{C}_{\mathfrak{g}}(f(\mathfrak{h}))$ has finite index in $\mathfrak{g}$. By Lemma \ref{symact}, we have that $f(\mathfrak{h})\apprle \widetilde{C}_A(\mathfrak{g})=\{0\}$. Therefore $f(\mathfrak{h})$ is finite, and so $f\sim 0$. This completes the proof. 
   \end{proof}
    \section{Finiteness of $\widetilde{H}^1(\mathfrak{g},A)$}
    We now prove Theorem \ref{TrivialityH1}, which we recall.
    \begin{theorem}
    Let $\mathfrak{g}$ be a nilpotent Lie ring acting on an abelian group $A$, both definable in a finite-dimensional theory. Assume that $\widetilde{C}_A(\mathfrak{g})$ is finite. Then $\widetilde{H}^1(\mathfrak{g},A)$ is finite.
    \end{theorem}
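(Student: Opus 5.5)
We argue by induction on $\dim(A)$, reducing first to a strongly absolutely minimal situation and then exploiting Theorem \ref{LinAlmNil} together with Lemma \ref{DerTri}.

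\textbf{Reductions.} By Lemma \ref{H1finind}, replacing $\mathfrak{g}$ by a definable Lie subring of finite index only enlarges $\widetilde{H}^1$ (it embeds), and $\widetilde{C}_A$ stays finite since finite index subrings have the same almost centraliser. By Corollary \ref{A/A_1} and Corollary \ref{A/A_2} we may replace $A$ by a definable submodule of finite index or quotient by a finite submodule. In particular, by Lemma \ref{g/Z(g)}, quotienting by the finite $\widetilde{C}_A(\mathfrak{g})$ we may assume $\widetilde{C}_A(\mathfrak{g})=\{0\}$. If $A$ is finite there is nothing to do. Otherwise, Lemma \ref{ObsStrAbsMin} gives a definable $\mathfrak{h}\sqsubseteq\mathfrak{g}$ of finite index and an infinite definable $\mathfrak{h}$-submodule $B\leq A$ with $(\mathfrak{h},B)$ strongly absolutely minimal. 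Passing to $\mathfrak{h}$, the long exact sequence of Lemma \ref{longseries} for $0\to B\to A\to A/B\to 0$ yields
$$\widetilde{H}^1(\mathfrak{h},B)\to\widetilde{H}^1(\mathfrak{h},A)\to\widetilde{H}^1(\mathfrak{h},A/B).$$
Here $\dim(A/B)<\dim(A)$ (or $B$ has finite index, in which case Corollary \ref{A/A_1} applies). Thus it suffices to prove the following:
\begin{enumerate}
\item $\widetilde{H}^1(\mathfrak{h},B)$ is finite;
\item $\widetilde{C}_{A/B}(\mathfrak{h})$ is finite, so that induction applies to $A/B$.
\end{enumerate}
Point (2) is not automatic. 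If it fails, I would choose $B$ differently: take $B$ to be the preimage of a maximal-dimension choice inside $\widetilde{C}_{A/B'}(\mathfrak{h})$, i.e.\ refine the filtration so that each factor either is strongly absolutely minimal with finite almost centraliser or is almost centralised. Almost-centralised factors $F$ with $\widetilde{C}_A(\mathfrak{g})=0$ below them should then be handled via the connecting map $\delta_1$, whose finiteness follows because $\widetilde{H}^0$ of the submodule is finite. This bookkeeping is delicate, and I expect to need the nilpotency of $\mathfrak{g}$ here: it ensures that a nontrivial almost-centralised factor cannot sit above a factor with trivial almost centraliser without producing almost fixed points.

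\textbf{Minimal case.} Suppose $(\mathfrak{g},A)$ is strongly absolutely minimal with $\widetilde{C}_A(\mathfrak{g})$ finite. The action is not almost trivial, so Theorem \ref{LinAlmNil} and the reductions above give $A\simeq K^+$ for a definable finite-dimensional field $K$, with $\mathfrak{g}/\widetilde{C}_{\mathfrak{g}}(A)$ abelian and acting by scalars via $\lambda:\mathfrak{g}\to K$. Let $\mathfrak{c}=\widetilde{C}_{\mathfrak{g}}(A)$. Since $\mathfrak{g}$ is nilpotent, the almost centre is large, and I would apply Corollary \ref{QUoAlmCen} (after Lemma \ref{exact series}) to reduce to $\mathfrak{c}$ finite, hence to $\mathfrak{g}$ abelian acting by $\lambda$ (mod finite). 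Choose $g_0$ with $\lambda(g_0)\neq 0$. For an almost derivation $f$, the map $f-d(\lambda(g_0)^{-1}f(g_0))$ vanishes at $g_0$, and then the derivation identity $\lambda(g_0)f(g')=\lambda(g')f(g_0)=0$ for almost all $g'$ shows $f\sim$ inner. Since the action is scalar, $d(a)(g)=\lambda(g)a$, so this is a direct computation and gives $\widetilde{H}^1=0$ in this case. Where $\lambda(\mathfrak{g})$ generates $K$ non-trivially, the constant-of-derivation trick of Lemma \ref{DerTri} controls any residual obstruction.

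\textbf{Main obstacle.} The hard step is the bookkeeping in the reduction: ensuring that induction applies, namely the finiteness of $\widetilde{C}_{A/B}(\mathfrak{h})$, and handling the passage from $\mathfrak{c}$ almost central but infinite to the abelian scalar situation. I would first try to arrange the choice of $B$ via nilpotency and Lemma \ref{fin[]}.
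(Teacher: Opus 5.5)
Your overall plan matches the paper's. You reduce to $\widetilde{C}_A(\mathfrak{g})=\{0\}$ via Corollary~\ref{A/A_2} and Lemma~\ref{g/Z(g)}, pass to finite-index subrings by Lemma~\ref{H1finind}, split $A$ along a definable submodule $B$ with Lemma~\ref{longseries}, and settle the minimal case with Theorem~\ref{LinAlmNil}. Your minimal-case computation, $f(g')=\lambda(g')\lambda(g_0)^{-1}f(g_0)$ for $g'\in D^f_{g_0}$ once $\mathfrak{g}$ is abelian and acts by scalars, is exactly the paper's. Take $g_0\in D^f$ with $\lambda(g_0)\neq 0$; the appeal to Lemma~\ref{DerTri} is not needed. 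Inducting on $\dim(A)$ rather than on $\widetilde{lg}(A)$ is harmless. The gap is in the inductive step. You declare point (2) ``not automatic'' and treat it as the main obstacle. In its place you offer a plan you never carry out: re-choosing $B$, refining the filtration into minimal and almost-centralised factors, and invoking nilpotency. As written, the induction does not close.

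The missing observation is that (2) follows from (1) through the same exact sequence you already use, for any choice of $B$. Point (1) raises no hypothesis issue, since $\widetilde{C}_B(\mathfrak{h})=B\cap\widetilde{C}_A(\mathfrak{h})=\{0\}$. Now look at the earlier segment $\widetilde{H}^0(\mathfrak{h},A)\xrightarrow{p}\widetilde{H}^0(\mathfrak{h},A/B)\xrightarrow{\delta_1}\widetilde{H}^1(\mathfrak{h},B)$. The kernel of $\delta_1$ is $p(\widetilde{H}^0(\mathfrak{h},A))=p(\widetilde{C}_A(\mathfrak{h}))=\{0\}$, and its image lies in the finite group $\widetilde{H}^1(\mathfrak{h},B)$. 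Hence $\widetilde{C}_{A/B}(\mathfrak{h})=\widetilde{H}^0(\mathfrak{h},A/B)$ is finite, and the induction hypothesis applies to $A/B$. This is exactly how the paper finishes: handle the submodule first, read off finiteness of $\widetilde{H}^0$ of the quotient from the sequence, then apply induction to the quotient. Your tentative remark also has the inputs wrong. What controls $\delta_1$ is finiteness of $\widetilde{H}^1$ of the submodule and of $\widetilde{H}^0$ of the whole module, not of $\widetilde{H}^0$ of the submodule. Nilpotency is not needed at this point either; it enters only through Theorem~\ref{LinAlmNil} in the minimal case that feeds (1). The absence of infinite almost-centralised factors above a module with trivial almost centraliser is a consequence of this argument (Corollary~\ref{FinH0UV}), not something to arrange beforehand.
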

    \begin{proof}
    We may assume that $\widetilde{C}_A(\mathfrak{g})$ is trivial. Indeed, by Lemma \ref{A/A_2}, the group $\widetilde{H}^1(\mathfrak{g},A)$ is isogenous to $\widetilde{H}^1(\mathfrak{g},A/\widetilde{C}_{A}(\mathfrak{g}))$, and so, if the latter is finite, also $\widetilde{H}^1(\mathfrak{g},A)$ is finite. Working with the module $(\mathfrak{g},A/\widetilde{C}_A(\mathfrak{g}))$, we may assume that $\widetilde{C}_{A}(\mathfrak{g})$ is trivial by Lemma \ref{g/Z(g)}.\\
    We argue by induction on the dimension of $A$. Let $\mathcal{C}$ be the collection of all series $(A_i)_{i\leq m}$ of definable $\mathfrak{h}$-submodules of $A$, where $\mathfrak{h}$ varies in the definable Lie subrings of finite index in $\mathfrak{g}$, such that every factor $A_i/A_{i+1}$ is an infinite $\mathfrak{h}$-minimal module. Let $\widetilde{lg}(A)$ denote the maximal length of a series in $\mathcal{C}$. Let $(A_i)_{i\leq \widetilde{lg}(A)}\in \mathcal{C}$ a series of maximal length, and let $\mathfrak{h}$ be a definable Lie subring of finite index in $\mathfrak{g}$ such that every $A_i$ is $\mathfrak{h}$-invariant.\\
    Assume first that $\widetilde{lg}(A)=1$. Then $A$ is strongly absolutely $\mathfrak{g}$-minimal by Lemma \ref{LinAlmAbe2}. Hence, by Theorem \ref{LinAlmNil}, there exist a definable Lie subring $\mathfrak{h}$ of finite index in $\mathfrak{g}$ and a $\mathfrak{h}$-submodule $A_1\leq A$ of finite index in $A$ such that $A_1\simeq K^+$ for some definable field $K$ and such that $\mathfrak{h}/\widetilde{C}_{\mathfrak{h}}(A_1)$ definably embeds into $K^+\cdot \mathrm{Id}$. By Lemma \ref{H1finind}, it suffices to prove that $\widetilde{H}^1(\mathfrak{h},A)$ is finite. Hence, we may assume that $\mathfrak{g}=\mathfrak{h}$. Moreover, applying iteratively Lemma \ref{QUoAlmCen}, we may assume that $\widetilde{C}_{\mathfrak{g}}(A)$ is trivial. Similarly, applying Lemma \ref{A/A_2}, we may assume that $A_1=A$. Let 
    $$p:\mathfrak{g}\to K^+$$
    be the embedding given by Theorem \ref{LinAlmNil}. Then, for every almost derivation $\sigma$, choose $y\in D^{\sigma}\setminus\{0\}$. Since $p(y)$ is invertible in $K$, we can define the element $a=:p(y)^{-1}\sigma(y)\in K^+\simeq A$. Since $\mathfrak{g}$ is abelian, we have that 
    $$p(h)\cdot \sigma(y)=\sigma([h,y])+p(y)\cdot\sigma(h)=p(y)\cdot\sigma(h)$$ for every $h\in D^{\sigma}_y$. Hence, for every $h\in D^{\sigma}_y$,
   $$p(h)\cdot a=p(h)\cdot p(y)^{-1}\sigma(y)=p(y)^{-1}\cdot p(h)\cdot \sigma(y)=p(y)^{-1}p(y)\sigma(h)=\sigma(h).$$
    Thus, the partial homomorphism $\sigma$ is equivalent to $d(a)$, and so $\sigma\in [\operatorname{InnDer}(\mathfrak{g},A)]$. In conclusion, $\widetilde{H}^1(\mathfrak{g},A)$ is trivial, and this proves the claim.\\ 
   We now continue with the inductive step. Assume that $\widetilde{lg}(A)=n$. Let $B$ be a definable infinite $\mathfrak{h}$-submodule of infinite index in $A$, where $\mathfrak{h}\sqsubseteq \mathfrak{g}$ is a Lie subring of finite index in $\mathfrak{g}$. Clearly, $\widetilde{C}_A(\mathfrak{h})=\widetilde{C}_A(\mathfrak{g})$, which is trivial. By the induction hypothesis, $\widetilde{H}^1(\mathfrak{h},B)$ is finite. Let 
   $$0\to B\xrightarrow{i} A\xrightarrow{p}A/B\to 0$$
   be an exact sequence of $\mathfrak{h}$-modules. By Lemma \ref{longseries}, the sequence
   $$0\to \widetilde{H}^0(\mathfrak{h},B)\xrightarrow{i} \widetilde{H}^0(\mathfrak{h},A)\xrightarrow{p} \widetilde{H}^0(\mathfrak{h},A/B)\xrightarrow{\delta_1} \widetilde{H}^1(\mathfrak{h},B)\xrightarrow{i^1} \widetilde{H}^1(\mathfrak{h},A)\xrightarrow{p^1} \widetilde{H}^1(\mathfrak{h},A/B)$$
   is exact. Since $\widetilde{H}^0(\mathfrak{h},A)$ is trivial and $\widetilde{H}^1(\mathfrak{h},B)$ is finite, it follows that $\widetilde{H}^0(\mathfrak{h},A/B)$ is finite. By the induction hypothesis, the group $\widetilde{H}^1(\mathfrak{h},A/B)$ is finite. By the previous exact sequence, we conclude that $\widetilde{H}^1(\mathfrak{h},A)$ is finite. The theorem then follows from Lemma \ref{H1finind}. 
    \end{proof}
We now prove some fundamental corollaries of Theorem \ref{TrivialityH1}.
    \begin{corollary}\label{FinH0UV}
    Let $\mathfrak{g}$ be a nilpotent Lie ring and $A$ a $\mathfrak{g}$-module, both definable in a finite-dimensional theory, and let $A\geq U\geq V$ be two definable $\mathfrak{g}$-submodules of $A$. Assume that $\widetilde{C}_A(\mathfrak{g})$ is finite. Then $\widetilde{C}_{\mathfrak{g}}(U/V)$ is finite. 
     \end{corollary}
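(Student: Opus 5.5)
I read the conclusion as the finiteness of the $0$th almost cohomology group of the quotient module, that is, of $\widetilde{C}_{U/V}(\mathfrak{g})=\widetilde{H}^0(\mathfrak{g},U/V)$. Taken literally, $C_{\mathfrak{g}}(U/V)$ cannot be finite in general: for $U=V$ it is all of $\mathfrak{g}$. So the plan is to prove that $\widetilde{H}^0(\mathfrak{g},U/V)$ is finite, using the exact sequence of Lemma \ref{longseries} together with Theorem \ref{TrivialityH1}.

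First I transfer the hypothesis to the submodules. Let $W$ be a $\mathfrak{g}$-submodule of $A$. For $w\in W$ the centraliser $C_{\mathfrak{g}}(w)$ is the same whether $w$ is viewed in $W$ or in $A$. Hence $\widetilde{C}_W(\mathfrak{g})=W\cap\widetilde{C}_A(\mathfrak{g})$, and this group is finite. Applying this to $W=U$ and $W=V$ shows that $\widetilde{H}^0(\mathfrak{g},U)$ and $\widetilde{H}^0(\mathfrak{g},V)$ are both finite.

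Next, $(\mathfrak{g},V)$ is a definable module with $\mathfrak{g}$ nilpotent and $\widetilde{H}^0(\mathfrak{g},V)$ finite. Theorem \ref{TrivialityH1} therefore gives that $\widetilde{H}^1(\mathfrak{g},V)$ is finite.

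Now I apply Lemma \ref{longseries} to the short exact sequence of $\mathfrak{g}$-modules
$$0\to V\xrightarrow{i}U\xrightarrow{p}U/V\to 0,$$
where $i$ is the inclusion and $p$ the projection. This yields the exact segment
$$\widetilde{H}^0(\mathfrak{g},U)\xrightarrow{p}\widetilde{H}^0(\mathfrak{g},U/V)\xrightarrow{\delta_1}\widetilde{H}^1(\mathfrak{g},V).$$
By exactness, the image of $\delta_1$ is isomorphic to $\widetilde{H}^0(\mathfrak{g},U/V)/\mathrm{im}(p)$. This quotient embeds into the finite group $\widetilde{H}^1(\mathfrak{g},V)$, so it is finite. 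Moreover $\mathrm{im}(p)$ is a homomorphic image of the finite group $\widetilde{H}^0(\mathfrak{g},U)$, so it is finite as well. Being finite-by-finite, $\widetilde{H}^0(\mathfrak{g},U/V)=\widetilde{C}_{U/V}(\mathfrak{g})$ is finite.

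There is no real obstacle here; the only points needing care are the reading of the statement and the verification that $U$, $V$ and $U/V$ are definable $\mathfrak{g}$-modules. The latter holds because $U$ and $V$ are definable $\mathfrak{g}$-invariant subgroups, so Lemma \ref{longseries} and Theorem \ref{TrivialityH1} apply to them.
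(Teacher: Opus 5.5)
Your proof is correct and follows essentially the paper's own argument. The paper also gets finiteness of $\widetilde{H}^0(\mathfrak{g},U)$ and, via Theorem~\ref{TrivialityH1}, of $\widetilde{H}^1(\mathfrak{g},V)$, then reads off finiteness of $\widetilde{H}^0(\mathfrak{g},U/V)$ from the exact sequence of Lemma~\ref{longseries} for $0\to V\to U\to U/V\to 0$. (It also records that $\widetilde{H}^1(\mathfrak{g},U)$ is finite, which is not needed.) Your reading of the conclusion as finiteness of $\widetilde{C}_{U/V}(\mathfrak{g})=\widetilde{H}^0(\mathfrak{g},U/V)$ is exactly what the paper proves and how it uses the corollary later, and your finite-by-finite step via $\ker\delta_1=\mathrm{im}(p)$ just makes explicit what the paper leaves implicit.
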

    \begin{proof}
    Since $\widetilde{C}_A(\mathfrak{g})$ is finite, the same holds for $\widetilde{C}_U(\mathfrak{g})$ and $\widetilde{C}_V(\mathfrak{g})$. By Theorem \ref{TrivialityH1}, the groups $\widetilde{H}^1(\mathfrak{g},U)$ and $\widetilde{H}^1(\mathfrak{g},V)$ are both finite. Since the sequence 
   $$0\to V\xrightarrow{i} U\xrightarrow{p} U/V\to 0$$ 
   is exact, then the sequence
   $$0\mapsto \widetilde{H}^0(\mathfrak{g},V)\xrightarrow{i} \widetilde{H}^0(\mathfrak{g},U)\xrightarrow{p} \widetilde{H}^0(\mathfrak{g},U/V)\xrightarrow{\delta_1} \widetilde{H}^1(\mathfrak{g},V)\xrightarrow{i^1} \widetilde{H}^1(\mathfrak{g},U)\xrightarrow{p^1} \widetilde{H}^1(\mathfrak{g}, U/V)$$
   is exact by Lemma \ref{longseries}. As $\widetilde{H}^0(\mathfrak{g},U)$ and $\widetilde{H}^1(\mathfrak{g},V)$ are both finite, also $\widetilde{H}^0(\mathfrak{g},U/V)$ is finite.
    \end{proof}
    Corollary \ref{FinH0UV} allows us to characterise the action of a nilpotent Lie ring.
    \begin{theorem}
        Let $\mathfrak{g}$ be a nilpotent Lie ring acting on a module $A$, both definable in a finite-dimensional theory. Then there exist
        \begin{itemize}
            \item A natural $n\in \mathbb{N}$ such that $\widetilde{C}^n_A(\mathfrak{g})=\widetilde{C}^{n+1}_A(\mathfrak{g})$;
            \item A definable Lie subring $\mathfrak{h}\sqsubseteq \mathfrak{g}$ of finite index;
            \item A chain $(A_i)_{i=0}^n$ of $\mathfrak{h}$-modules such that
            \begin{itemize}
                \item $A_0=\widetilde{C}^n_A(\mathfrak{g})$;
                \item $A_n\leq A$ has finite index in $A$; 
                \item $A_i\geq A_{i-1}$ for every $i\leq n$;
                \item $A_i/A_{i-1}$ is isogenous to the additive group of an infinite definable field $K_i$ of finite dimension.
            \end{itemize}
        \end{itemize} 
    \end{theorem}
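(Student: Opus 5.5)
We build the chain by an ascending construction of iterated almost centralisers, using Corollary \ref{FinH0UV} to make the top of the chain well behaved and Theorem \ref{LinAlmNil} to identify the factors. Set $\widetilde{C}^0_A(\mathfrak{g})=0$ and $\widetilde{C}^{k+1}_A(\mathfrak{g})/\widetilde{C}^k_A(\mathfrak{g})=\widetilde{C}_{A/\widetilde{C}^k_A(\mathfrak{g})}(\mathfrak{g})$.

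By Lemma \ref{DefZ} (with $\mathfrak{h}=\mathfrak{g}$), each $\widetilde{C}^k_A(\mathfrak{g})$ is a definable $\mathfrak{g}$-submodule. The sequence is ascending. Whenever it strictly increases by an infinite amount, the dimension of $A/\widetilde{C}^k_A(\mathfrak{g})$ drops. Whenever the factor $\widetilde{C}^{k+1}_A(\mathfrak{g})/\widetilde{C}^k_A(\mathfrak{g})$ is finite, Lemma \ref{g/Z(g)} applied to $A/\widetilde{C}^k_A(\mathfrak{g})$ shows that the next step is trivial, so the sequence stabilises. Since dimensions are finite, there is $n$ with $\widetilde{C}^n_A(\mathfrak{g})=\widetilde{C}^{n+1}_A(\mathfrak{g})$, which gives the first item. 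The quotient $\bar A=A/\widetilde{C}^n_A(\mathfrak{g})$ then has $\widetilde{C}_{\bar A}(\mathfrak{g})$ trivial.

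Next, we work in $\bar A$ and decompose it into strongly absolutely minimal pieces. Note that the indexing in the statement (the chain starting at $A_0=\widetilde{C}^n_A(\mathfrak{g})$ with $n$ steps) suggests that the intended $n$ for the chain is really the length of this decomposition. I would reuse a single letter only if forced, and otherwise allow the chain length to be $\widetilde{lg}(\bar A)$.

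The decomposition proceeds inductively, as in the proof of Theorem \ref{TrivialityH1}. Apply Lemma \ref{ObsStrAbsMin} to $\bar A$ to obtain a finite-index definable subring $\mathfrak{h}_1$ and a strongly absolutely minimal $\mathfrak{h}_1$-submodule $B_1$. Pass to $\bar A/B_1$ and repeat, intersecting the finite-index subrings at each stage; this terminates because dimension drops. At each stage the quotient $Q=\bar A/B_{\le k}$ has finite $\widetilde{C}_Q(\mathfrak{h})$. This follows from Corollary \ref{FinH0UV}, applied with $U=\bar A$, $V=B_{\le k}$ and the nilpotent ring $\mathfrak{h}$; note that $\widetilde{C}$ is unchanged when passing to finite-index subrings. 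This finiteness is the crucial point: it guarantees that each new strongly absolutely minimal factor $B_{k+1}/B_{\le k}$ has only finite almost centraliser. Such a factor is infinite and minimal, so its action is therefore not almost trivial.

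For each factor, Theorem \ref{LinAlmNil} applies and gives a further finite-index subring, a finite-index submodule $V_1$ and a finite $V_2$ with $V_1/V_2\simeq K^+$. Hence the factor is isogenous to $K_i^+$, where $K_i$ is a definable field, infinite because the factor is infinite. Finally, let $\mathfrak{h}$ be the intersection of all the subrings obtained; it still has finite index. Pull the chain back to $A$ via the projection, with base $A_0=\widetilde{C}^n_A(\mathfrak{g})$.

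The main obstacle is controlling the finite-index and isogeny errors when stacking. The top module of the chain only has finite index in $A$, and the submodules produced by Theorem \ref{LinAlmNil} are only of finite index and only invariant under smaller subrings. I would handle this with Corollaries \ref{A/A_1} and \ref{A/A_2}-style replacements. Concretely, at each stage replace the current module by the finite-index submodule supplied by the lemma, and absorb finite submodules into isogeny. Because there are finitely many stages, the accumulated subring still has finite index and $A_n$ still has finite index in $A$.
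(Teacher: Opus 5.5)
Your proposal is correct and follows the paper's own proof: you stabilise the iterated almost centralisers using Lemma \ref{g/Z(g)}, split $A/\widetilde{C}^n_A(\mathfrak{g})$ into strongly absolutely minimal factors via Lemma \ref{ObsStrAbsMin}, use Corollary \ref{FinH0UV} to see that no factor is almost trivial, and conclude with Theorem \ref{LinAlmNil}. You are right to decouple the chain length from the stabilisation index: the paper's proof silently reuses $n$, and a trivial action on an infinite $A$ shows one letter cannot serve both roles. Your closing worry about stacking errors is unnecessary, since each factor $B_{k+1}/B_k$ is already isogenous to $K_i^+$ as the statement requires.
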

    \begin{proof}
        Let $\widetilde{C}^{n-1}_{A}(\mathfrak{g})$ be an iterated almost centraliser of maximal possible dimension. Then, by the same proof of Lemma \ref{g/Z(g)}, we have that $\widetilde{C}^n_A({\mathfrak{g}})=\widetilde{C}^{n+1}_A({\mathfrak{g}})$. Hence, the group $\widetilde{C}_{A/\widetilde{C}^n_{A}(\mathfrak{g})}(\mathfrak{g})$ is trivial. Let $(A_i)_{i\leq n}$ be a series of $\mathfrak{h}$-modules, where $\mathfrak{h}\sqsubseteq \mathfrak{g}$ is a definable Lie subring of finite index, such that $A_{i+1}/A_i$ is strongly absolutely $\mathfrak{h}$-minimal. This series exists by Lemma \ref{ObsStrAbsMin}. Moreover, by Corollary \ref{FinH0UV}, the group $\widetilde{C}_{\mathfrak{h}}(A_i/A_{i-1})$ is finite for every $i\leq n$. Then the conclusion follows from Theorem \ref{LinAlmNil}.
    \end{proof}
    Another application of Theorem \ref{TrivialityH1} concerns almost Cartan subrings. 
    \begin{corollary}
    Let $\mathfrak{g}$ be a Lie ring definable in a finite-dimensional theory. Let $\mathfrak{i}$ be a definable ideal of $\mathfrak{g}$ and $\mathfrak{c}$ a definable almost Cartan subring of $\mathfrak{i}$. Then $\mathfrak{g}\sim \mathfrak{i}+\widetilde{N}_{\mathfrak{g}}(\mathfrak{c})$.
    \end{corollary}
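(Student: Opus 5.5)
We follow the classical proof that Cartan subalgebras control ideals, replacing normalisers by almost normalisers and invoking Theorem \ref{TrivialityH1}. First reduce to the case where $\mathfrak{c}$ is nilpotent. Since $\mathfrak{c}$ is virtually nilpotent, we would pass to a definable nilpotent Lie subring $\mathfrak{c}_0\sqsubseteq\mathfrak{c}$ of finite index, chosen definable by an argument like Lemma \ref{ucc}. Commensurable subrings have the same almost normaliser, so $\widetilde{N}_{\mathfrak{g}}(\mathfrak{c}_0)=\widetilde{N}_{\mathfrak{g}}(\mathfrak{c})$, and $\mathfrak{c}_0$ is still almost self-normalising in $\mathfrak{i}$. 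We then consider the $\mathfrak{c}$-module $A:=\mathfrak{i}/\mathfrak{c}$, or rather a suitable definable quotient of the $\mathfrak{c}$-module $\mathfrak{i}$. The action is adjoint: $c\cdot(x+\mathfrak{c})=[c,x]+\mathfrak{c}$.

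\textbf{Step 1: $\widetilde{H}^0(\mathfrak{c},\mathfrak{i}/\mathfrak{c})$ is finite.} An element $x+\mathfrak{c}$ lies in $\widetilde{C}_{\mathfrak{i}/\mathfrak{c}}(\mathfrak{c})$ iff $[\mathfrak{c},x]\apprle\mathfrak{c}$. If the preimage $X$ of this almost centraliser is definable (Lemma \ref{DefZ}), then Lemma \ref{symact} gives $\mathfrak{c}\apprle\widetilde{C}_{\mathfrak{i}}(X/\mathfrak{c})$. We want to conclude that $X$ almost normalises $\mathfrak{c}$, so that $X\apprle\widetilde{N}_{\mathfrak{i}}(\mathfrak{c})\sim\mathfrak{c}$ and $X/\mathfrak{c}$ is finite. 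One subtlety is that ``$x$ almost normalises $\mathfrak{c}$'' means $[x,\mathfrak{c}]\apprle\mathfrak{c}$ as a subgroup. Finiteness of $(\mathfrak{c}+[x,\mathfrak{c}])/\mathfrak{c}$ is exactly the condition for $x+\mathfrak{c}$ to be in the almost centraliser, so the two notions match up to this bookkeeping.

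\textbf{Step 2: deduce $\widetilde{H}^1$ finite.} By Theorem \ref{TrivialityH1}, applied to the nilpotent $\mathfrak{c}$, the group $\widetilde{H}^1(\mathfrak{c},\mathfrak{i}/\mathfrak{c})$ is finite.

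\textbf{Step 3: build almost derivations from $\mathfrak{g}$.} For $g\in\mathfrak{g}$, define $f_g:\mathfrak{c}\to\mathfrak{i}/\mathfrak{c}$ by $f_g(c)=[g,c]+\mathfrak{c}$. This lands in $\mathfrak{i}$ because $\mathfrak{i}$ is an ideal. The Jacobi identity shows $f_g$ is a genuine derivation, hence an almost derivation. The assignment $g\mapsto[f_g]$ is a group homomorphism $\Phi:\mathfrak{g}\to\widetilde{H}^1(\mathfrak{c},\mathfrak{i}/\mathfrak{c})$. Its image is finite, so $\ker\Phi$ has finite index in $\mathfrak{g}$.

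\textbf{Step 4: identify the kernel.} Take $g\in\ker\Phi$. Then there is $x\in\mathfrak{i}$ with $f_g\sim d(x+\mathfrak{c})$. On a finite-index subgroup of $\mathfrak{c}$ we have $[g,c]-[x,c]\in\mathfrak{c}$ modulo a finite set, that is, $[g-x,\mathfrak{c}]\apprle\mathfrak{c}$. Hence $g-x\in\widetilde{N}_{\mathfrak{g}}(\mathfrak{c})$ and $g\in\mathfrak{i}+\widetilde{N}_{\mathfrak{g}}(\mathfrak{c})$. Therefore $\ker\Phi\le\mathfrak{i}+\widetilde{N}_{\mathfrak{g}}(\mathfrak{c})$, which gives $\mathfrak{g}\sim\mathfrak{i}+\widetilde{N}_{\mathfrak{g}}(\mathfrak{c})$.

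\textbf{Main obstacle.} The hardest part is that $\mathfrak{c}$ need not be a $\mathfrak{c}$-submodule in a way compatible with definability. Moreover, $\widetilde{N}_{\mathfrak{i}}(\mathfrak{c})$ need not be definable, so the ``almost'' conditions must be translated carefully in Step 1 and Step 4. In Step 1 we would first try to show directly that $X$ is definable and use Lemma \ref{symact}. Failing that, we would replace $\mathfrak{c}$ by the definable ring $\widetilde{N}_{\mathfrak{i}}(\mathfrak{c})\cap$(a definable finite-index piece), using the UCC (Lemma \ref{ucc}) to get definability.
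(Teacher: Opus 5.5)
Your proof is correct, and it reaches the result by a more direct route than the paper's. The paper lets $\mathfrak{c}$ act on $\mathfrak{g}/\mathfrak{n}$ with $\mathfrak{n}=\widetilde{N}_{\mathfrak{g}}(\mathfrak{c})$. It shows $\widetilde{C}_{\mathfrak{g}/\mathfrak{n}}(\mathfrak{c})=0$ using $\mathfrak{n}\cap\mathfrak{i}=\widetilde{N}_{\mathfrak{i}}(\mathfrak{c})\sim\mathfrak{c}$. It then applies Corollary \ref{FinH0UV} to the submodule $(\mathfrak{i}+\mathfrak{n})/\mathfrak{n}$ to get $\widetilde{H}^0(\mathfrak{c},\mathfrak{g}/(\mathfrak{i}+\mathfrak{n}))$ finite. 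But $\mathfrak{c}$ acts trivially on $\mathfrak{g}/(\mathfrak{i}+\mathfrak{n})$, since $[\mathfrak{c},\mathfrak{g}]\le\mathfrak{i}$, so this quotient is finite. Your $\Phi$ is, up to sign and the finite-kernel surjection $\mathfrak{i}/\mathfrak{c}\to\mathfrak{i}/\widetilde{N}_{\mathfrak{i}}(\mathfrak{c})\cong(\mathfrak{i}+\mathfrak{n})/\mathfrak{n}$, exactly the connecting map $\delta_1$ of Lemma \ref{longseries} for $0\to(\mathfrak{i}+\mathfrak{n})/\mathfrak{n}\to\mathfrak{g}/\mathfrak{n}\to\mathfrak{g}/(\mathfrak{i}+\mathfrak{n})\to 0$. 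Your Step 4 verifies by hand what the paper gets from exactness $\ker\delta_1=\mathrm{im}(p)$ together with $\widetilde{C}_{\mathfrak{g}/\mathfrak{n}}(\mathfrak{c})=0$.

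Unfolding the argument this way buys you several things:
\begin{itemize}
\item you need only Theorem \ref{TrivialityH1}, not Lemma \ref{longseries} or Corollary \ref{FinH0UV};
\item your coefficient module $\mathfrak{i}/\mathfrak{c}$ is visibly definable, whereas the paper implicitly needs $\mathfrak{n}$ definable, and must check it is $\mathfrak{c}$-invariant, before $\mathfrak{g}/\mathfrak{n}$ is a definable $\mathfrak{c}$-module;
\item your cocycle $c\mapsto[g,c]+\mathfrak{c}$ is an honest derivation on all of $\mathfrak{c}$ by the Jacobi identity, so no partial-domain bookkeeping arises.
\end{itemize}
The paper's version, in exchange, is a short formal application of the machinery it has built, and it exhibits the result as an instance of Corollary \ref{FinH0UV}.

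Some of your hedging is unnecessary.
\begin{itemize}
\item Since $\mathfrak{c}$ is a definable subring, it is a definable $\mathfrak{c}$-submodule of $\mathfrak{i}$ under $\ad$. Moreover $\widetilde{C}_{\mathfrak{i}/\mathfrak{c}}(\mathfrak{c})=\widetilde{N}_{\mathfrak{i}}(\mathfrak{c})/\mathfrak{c}$ holds directly by definition, because $|[x,\mathfrak{c}]:[x,\mathfrak{c}]\cap\mathfrak{c}|=|([x,\mathfrak{c}]+\mathfrak{c})/\mathfrak{c}|$.
\item Hence Step 1 needs neither Lemma \ref{DefZ} nor Lemma \ref{symact}. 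Definability of $\widetilde{N}_{\mathfrak{i}}(\mathfrak{c})$ is never used anywhere, so the fallback in your last paragraph can be dropped.
\item In Step 4, $f_g-d(x+\mathfrak{c})$ is the map $c\mapsto[g+x,c]+\mathfrak{c}$. So it is $g+x$, not $g-x$, that lies in $\widetilde{N}_{\mathfrak{g}}(\mathfrak{c})$; this slip is harmless.
\item The passage to a definable nilpotent subring of finite index is glossed over exactly as in the paper.
\end{itemize}
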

    \begin{proof}
    Up to passing to a Lie subring of finite index, we may assume that $\mathfrak{c}$ is nilpotent.\\
    The action of $\mathfrak{c}$ on $(\mathfrak{g}/\mathfrak{n},+)$, where $\mathfrak{n}:=\widetilde{N}_{\mathfrak{g}}(\mathfrak{c})$, is well-defined since $[\mathfrak{c},\mathfrak{n}]\leq \mathfrak{n}$. Indeed, for every $n\in \mathfrak{n}$ and $c\in \mathfrak{c}$, 
    $$[[n,c],\mathfrak{c}]\leq [n,[c,\mathfrak{c}]]+[c,[n,\mathfrak{c}]]\apprle [n,\mathfrak{c}]+[c,\mathfrak{c}]\apprle \mathfrak{c}.$$
    We show that $\widetilde{C}_{(\mathfrak{g}/\mathfrak{n},+)}(\mathfrak{c})$ is trivial. Let $g\in \widetilde{C}_{(\mathfrak{g}/\mathfrak{n},+)}(\mathfrak{c})$. Then $[g,\mathfrak{c}]\apprle \mathfrak{n}$ and, since $\mathfrak{i}$ is an ideal of $\mathfrak{g}$, we obtain that $[g,\mathfrak{c}]\apprle \mathfrak{n}\cap \mathfrak{i}\apprle \mathfrak{c}$. Consequently $g\in \mathfrak{n}$ and, by arbitrariness of $g\in \widetilde{C}_{(\mathfrak{g}/\mathfrak{n})}(\mathfrak{c})$, we conclude that $\widetilde{C}_{(\mathfrak{g}/\mathfrak{n})}(\mathfrak{c})$ is trivial. Therefore, by Corollary \ref{FinH0UV}, the group $\widetilde{C}_{\mathfrak{g}/\mathfrak{i}+\mathfrak{n}}(\mathfrak{c})$ is finite. Conversely $[\mathfrak{c},\mathfrak{g}]\leq \mathfrak{i}$ since $\mathfrak{i}$ is an ideal. This yields a contradiction unless $\mathfrak{i}+\mathfrak{n}$ has finite index in $\mathfrak{g}$.
    \end{proof}

\section{Conclusion}\label{QuestionLieRing}
This article should be seen as a starting point for the application of cohomology tools in the analysis of finite-dimensional Lie rings. \\
The natural application for this article is to the classification of almost Cartan Lie rings. In the finite Morley rank framework, several fundamental results have been obtained by Zamour and Tindzogho Ntsiri in \cite{zamourCar}. An extension of \cite{zamourCar} to the finite-dimensional setting is essential for an analysis of NIP (absolutely) anisotropic Lie rings (see \cite{invitti2025lie}), which always contains an almost Cartan Lie subring by Lemma \cite[Lemma 7.8]{invitti2025lie}.
\begin{question}
    Analyse almost Cartan Lie subrings.
\end{question}
Another line of research is the analysis of cohomology groups for soluble non-nilpotent Lie rings, aiming for results similar to those of \cite{barnes1967cohomology}.
\begin{question}
    Determine the properties of $\widetilde{H}^1(\mathfrak{g},A)$ for a soluble definable Lie ring $\mathfrak{g}$ of finite dimension.
\end{question}
The main obstacle to further development of the almost cohomology theory for Lie rings lies in the construction of the $i$th almost cohomology groups, with $i\geq 2$. In particular, the second almost cohomology group may have an important role in the study of Lie rings of finite dimension. Indeed, Lie algebras with trivial second cohomology group for the adjoint action, called \emph{cohomologically rigid}, have been studied by several authors \cite{jose1992rang,goze2001classification}. The main obstacle is that, if we try to mimic the construction in \cite{zamourCohom} for $d(f)$ where $f\in \widetilde{C}^1(\mathfrak{g},A)$, the domain of $d(f)$ is not necessarily of finite index. This problem does not occur in the NIP finite-dimensional case, as a consequence of \cite[Lemma 2.10]{invitti2025lie}. This observation highlights the tame behaviour of NIP Lie rings of finite dimension, which is already suggested by \cite{invitti2025lie}.

\end{document}